\documentclass[11pt]{amsart}
\usepackage{amsmath,amssymb,amsthm}
\usepackage[latin1]{inputenc}
\usepackage{tabularx,multicol,array}
\usepackage{graphicx,float,psfrag}
 \usepackage{stmaryrd,mathrsfs}
\usepackage{url}
\usepackage{tikz}
\usepackage{caption}
\usepackage{subcaption}
\usepackage{mathrsfs}
\usepackage{accents}
\usepackage[numbers]{natbib}

\headheight=8pt
\textheight=624pt
\oddsidemargin=18pt
\topmargin=0pt
\textwidth=15,5cm
\evensidemargin=18pt

\newcommand{\reff}[1]{(\ref{#1})}

\theoremstyle{plain}
\newtheorem{theo}{Theorem}[section]
\newtheorem*{theo*}{Theorem}
\newtheorem{cor}[theo]{Corollary}
\newtheorem{prop}[theo]{Proposition}
\newtheorem{lem}[theo]{Lemma}
\newtheorem{defi}[theo]{Definition}
\theoremstyle{remark}
\newtheorem{rem}[theo]{Remark}

\newtheorem*{ex*}{Example}

\newcommand{\ca}{{\mathcal A}}

\newcommand{\cc}{{\mathcal C}}
\newcommand{\cd}{{\mathcal D}}

\newcommand{\cf}{{\mathcal F}}

\newcommand{\cj}{{\mathbb{J}}}
\newcommand{\cjj}{{\mathcal J}}

\newcommand{\cl}{{\mathcal L}}

\newcommand{\cs}{{\mathcal S}}

\newcommand{\N}{{\mathbb N}}
\renewcommand{\P}{{\mathbb P}}

\newcommand{\R}{{\mathbb R}}

\newcommand{\ind}{{\bf 1}}

\newcommand{\val}[1]{\mathop{\left| #1 \right|}\nolimits}
\newcommand{\inv}[1]{\mathop{\frac{1}{ #1}}\nolimits}
\newcommand{\expp}[1]{\mathop {\mathrm{e}^{ #1}}}

\renewcommand{\phi}{\varphi}
\renewcommand{\epsilon}{\varepsilon}

\title{Maximum entropy distribution of order statistics with given marginals}
\date{\today}

 \author{Cristina Butucea}
 \address{
 Cristina Butucea,
 Université Paris-Est, LAMA (UPE-MLV), 77455 Marne La Vallée, France.}
 \email{cristina.butucea@univ-mlv.fr}

 \author{Jean-François Delmas}
 \address{
 Jean-Fran\c cois Delmas,
 Université Paris-Est, CERMICS (ENPC), 77455 Marne La Vallée, France.}
 \email{delmas@cermics.enpc.fr}

 \author{Anne Dutfoy}
 \address{
 Anne Dutfoy,
 EDF Research \& Development, Industrial Risk Management Department, 92141 Clamart Cedex, France.}
 \email{anne.dutfoy@edf.fr}

 \author{Richard Fischer}
 \address{
 Richard Fischer,
 Université Paris-Est, CERMICS (ENPC), 77455 Marne La Vallée, France\\
 EDF Research \& Development, Industrial Risk Management Department, 92141 Clamart Cedex, France.}
 \email{fischerr@cermics.enpc.fr}

\begin{document}

\thanks{This work is partially supported by the French ``Agence Nationale de
 la Recherche'',CIFRE n° 1531/2012, and by EDF Research \& Development, Industrial Risk Management Department}

\keywords{copula, entropy, maximum entropy, order statistics}

\subjclass[2010]{62H05,60E15,62G30, 94A17}

 \begin{abstract}
   We consider distributions of ordered random vectors with given one-dimensional
   marginal distributions. We give an elementary necessary and sufficient
   condition for the existence of such a distribution with finite entropy.
   In this case, we give explicitly the density of the unique distribution
   which achieves the maximal entropy and compute the value of its entropy.
   This density is the unique one which has a product form on its support
   and the given one-dimensional marginals. The proof relies on the
   study of copulas with given one-dimensional marginal distributions for its
   order statistics.
\end{abstract}

\maketitle

 \section{Introduction} \label{sec:intro}

 Order statistics,  an almost  surely non-decreasing sequence  of random
 variables, have  received a lot  of attention  due to the  diversity of
 possible  applications. If  $X=(X_1,\hdots,X_d)$  is a  $d$-dimensional
 random  vector,  then  its order  statistics  $X^{OS}=(X_{(1)},  \ldots,
 X_{(d)})$ corresponds  to the permutation  of the components  of $X$  in the
 non-decreasing order,  so that $X_{(1)}  \leq X_{(2)} \leq  \hdots \leq
 X_{(d)}$.  The  components of  the  underlying  random vector  $X$  are
 usually, but  not necessarily, independent and  identically distributed
 (i.i.d.). Special attention has been  given to extreme values $X_{(1)}$
 and  $X_{(d)}$,  the  range  $X_{(d)}-X_{(1)}$, or  the  median  value.
 Direct  application of  the distribution  of the  $k$-th largest  order
 statistic  occurs  in  various  fields, such  as  climatology,  extreme
 events, reliability, insurance, financial  mathematics. We refer to the
 monographs  of \citeauthor*{david1970order}~\cite{david1970order}  and \citeauthor*{arnold1992first}~\cite{arnold1992first}  for a
 general overview on the subject  of order statistics. We are interested
 in the  dependence structure  of order  statistics, which  has received
 great attention  when the underlying  random vector is i.i.d.   and for
 the   non   i.i.d.     case   as   well.    In    the   i.i.d.    case,
 \citeauthor*{bickel1967some}~\cite{bickel1967some}  showed   that  any  two  order   statistics  are
 positively  correlated.   The  copula  of  the  joint  distribution  of
 $X_{(1)}$ and $X_{(d)}$ is  derived in \citeauthor*{schmitz2004revealing}~\cite{schmitz2004revealing} with
 exact  formulas  for  Kendall's   $\tau$  and  Spearman's  $\rho$.   In
 \citeauthor*{averous2005dependence}~\cite{averous2005dependence}, it  is shown  that the dependence  of the
 $j$-th order statistic  on the $i$-th order statistic  decreases as the
 distance  between $i$  and  $j$ increases  according  to the  bivariate
 monotone  regression dependence  ordering.  The  copula connecting  the
 limit  distribution  of  the   two  largest  order  statistics,  called
 bi-extremal  copula,  is  given   by  \citeauthor*{de2007limiting}~\cite{de2007limiting}  with  some
 additional  properties.  Exact  expressions  for Pearson's  correlation
 coefficient, Kendall's $\tau$  and Spearman's $\rho$ for  any two order
 statistics  are  obtained  in  \citeauthor*{navarro2010study}~\cite{navarro2010study}.   For  the  non
 i.i.d. case,  \citeauthor*{kim1990dependence}~\cite{kim1990dependence} shows  that some pairs  of order
 statistics  can  be negatively  correlated,  if  the underlying  random
 vector  is  sufficiently  negatively  dependent.   Positive  dependence
 measures    for    two    order   statistics    are    considered    in
 \citeauthor*{boland1996bivariate}~\cite{boland1996bivariate}  when the  underlying  random variables  are
 independent but  arbitrarily distributed  or when they  are identically
 distributed but not independent. A  generalization of these results for
 multivariate dependence properties is given by \citeauthor*{hu2008dependence}~\cite{hu2008dependence}.
 See also \citeauthor*{dubhashi2008note}~\cite{dubhashi2008note} for  conditional distribution of order
 statistics.\\

 Here, we focus  on the cumulative distribution function  (cdf) of order
 statistics without  referring to an underlying  distribution.  That is,
 we consider random vectors $X=(X_1,\hdots,X_d) \in \R^d$ such that a.s.
 $X_1\leq  \cdots \leq  X_d$  and we  suppose  that the  one-dimensional
 marginal distributions $\mathbf{F}=(\mathbf{F}_i, 1 \leq i \leq d)$ are
 given,  where $\mathbf{F}_i$  is the  cdf  of $X_i$.   A necessary  and
 sufficient condition for the existence of a joint distribution of order
 statistics with one-dimensional marginals $\mathbf{F}$ is that they are
 stochastically ordered, that is:
\begin{equation} \label{eq:stoch_order}
     \mathbf{F}_{i-1}(x) \geq \mathbf{F}_i(x) \quad \text{ for all } 2
     \leq i \leq d, x \in \R.
\end{equation}
      With  the  marginals  fixed,  the joint  distribution  of  the  order
   statistics  can be  characterized  by the  connecting  copula of  the
   random  vector,  which contains  all  information  on the  dependence
   structure  of  the order  statistics.   Copulas  of order  statistics
   derived   from  an   underlying  i.i.d. sample   were  considered   in
   \cite{averous2005dependence} in order to calculate measures of concordance
   between any two pairs of order statistics.
   For order statistics derived from a general parent distribution,
   \citeauthor*{NavarroSpizzichino2010}~\cite{NavarroSpizzichino2010}
   shows that the copula of the order statistics depends on the
   marginals  and the copula of the parent distribution through an
   exchangeable copula and the average of the marginals.  Construction of
   some copula  of order statistics  with given marginals were  given in
   \citeauthor*{lebrun2014copulas}~\cite{lebrun2014copulas}.\\

   Our aim is to find the  cdf of order  statistics of dimension $d$ with
   fixed marginals which maximizes  the differential entropy $H$ defined
   as, for a cdf $F$ with density $f$:
 \[
    H(F)=-\int f \log f,
 \]
 and $H(F)=-\infty$ if $F$  does not have a density. If  $Z$ is a random
 variable with cdf $F$, we shall  use the convention $H(Z)=H(F)$.  In an
 information-theoretic interpretation, the  maximum entropy distribution
 is the least  informative among order statistics  with given marginals.
 This problem  appears in models  where the one-dimensional  marginals are
 well  known (either  from  different experimentation  or from  physical
 models) but the dependence structure is unknown. In \citeauthor*{butucea2013maximum}~\cite{butucea2013maximum},
 we gave, when it exists, the maximum entropy distribution of $(X_1, \hdots, X_d)$
 such that $X_i$ is uniformly distributed on $[0,1]$ for $1\leq i \leq d$
 and the distribution of $X_{(d)}=\max_{1\leq i \leq d} X_i$ is given.

 For a $d$-dimensional  random variable $X=(X_1, \hdots,  X_d)$ with cdf
 $F$ and copula $C_F$, the entropy of $F$ can be decomposed into the sum
 of the  entropy of  its one-dimensional marginals  plus the  entropy of
 $C_F$ (see \citeauthor*{Zhao2011628}~\cite{Zhao2011628}):
 \[
    H(F)=\sum_{i=1}^d H(\mathbf{F}_i) + H(C_F),
 \]
 where  $\mathbf{F}_i$ is  the cdf  of $X_i$.   In our  case, since  the
 marginals  $\mathbf{F}=(\mathbf{F}_i,  1 \leq  i  \leq  d)$ are  fixed,
 maximizing  the  entropy  of  the   joint  distribution  $F$  of  an order
 statistics  is  equivalent to  maximizing  the  entropy of  its  copula
 $C_F$. Therefore  we shall  find the  maximum entropy  copula for
 order statistics with fixed marginal distributions.
To solve this question, we introduce the functional of $\mathbf{F}$
satisfying \reff{eq:stoch_order}:
\[
 \cj(\mathbf{F})=\sum_{i=2}^d \int_{\R}
 \mathbf{F}_{i}(dt)\,
 \val{\log\left(\mathbf{F}_{i-1}(t)-\mathbf{F}_{i}(t) \right)} .
\]

The main result of this paper is given by Theorem \ref{theo:f*} which
we reproduce here.
\begin{theo*}
Let $\mathbf{F}=(\mathbf{F}_i, 1 \leq i \leq d)$ be a $d$-dimensional vector of
 cdf's on $\R$ satisfying \reff{eq:stoch_order}.
\begin{itemize}
\item    If   there    exists    $1\leq   i    \leq    d$   such    that
  $H(\mathbf{F}_i)=-\infty$, or if $\cj({\mathbf{F}})  = + \infty$, then
  for all  cdf $F$ of   order  statistic with  one-dimensional marginals
  $\mathbf{F}$, we have $H(F)=-\infty $.
\item If  $H(\mathbf{F}_i) > -  \infty$ for all $1  \leq i \leq  d$, and
  $\cj({\mathbf{F}})  <  +  \infty$,  then there  exists  a  unique  cdf
  $F_\mathbf{F}$  of an order   statistic  with one-dimensional  marginals
  $\mathbf{F}$    such     that    $H(F_\mathbf{F})>-\infty     $    and
  $H(F_\mathbf{F})\geq H(F) $ for any cdf  $F$ of  order  statistic with
  one-dimensional marginals $\mathbf{F}$. Furthermore we have:
\[
H(F_{\mathbf{F}})=d-1  + \sum_{i=1}^d H({\mathbf{F}_i}) -
\cj(\mathbf{F}),
\]
and $F_\mathbf{F}$ has density $f_\mathbf{F}$ given
for $x=(x_1, \hdots, x_d) \in \R^d$ by:
   \[
      f_{\mathbf{F}}(x)=
            \mathbf{f}_{1} (x_1) \prod_{i=2}^d \frac{\mathbf{f}_{i}(x_i)}{{\mathbf{F}}_{i-1}\left(x_i \right)-{\mathbf{F}}_{i}(x_i)} \exp\left(-\int_{x_{i-1}}^{x_i}
           \frac{\mathbf{f}_{i}(s)}{{\mathbf{F}}_{i-1}(s)-{\mathbf{F}}_{i}(s) } \, ds \right) \ind_{L^\mathbf{F}}(x),
   \]
   where $\mathbf{f}_i$ is the density  function of $\mathbf{F}_i$ and $
   L^\mathbf{F}\subset  \R^d$  is  the  set of  ordered  vectors  $(x_1,
   \ldots,  x_d)$,  that is  $x_1\leq  \cdots  \leq  x_d$, such  that  $
   \mathbf{F}_{i-1}(t)> \mathbf{F}_i(t)$  for all $t\in  (x_{i-1}, x_i)$
   and $2\leq  i\leq d$.  (Notice that this  last condition  is automatically
   fulfilled if $\mathbf{F}_{i-1}> \mathbf{F}_i$.)
\end{itemize}
\end{theo*}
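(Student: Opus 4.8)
The engine of the proof is the Gibbs inequality. Writing $D_i(s)=\mathbf{F}_{i-1}(s)-\mathbf{F}_i(s)\geq 0$ and $g_i=\mathbf{f}_i/D_i$, the candidate density factorizes on $L^\mathbf{F}$ as $f_\mathbf{F}(x)=\mathbf{f}_1(x_1)\prod_{i=2}^d k_i(x_{i-1},x_i)$ with $k_i(u,v)=g_i(v)\exp(-\int_u^v g_i(s)\,ds)$, a Markov (product) structure. The plan is: (i) show $f_\mathbf{F}$ is itself a feasible density (an order statistic density with marginals $\mathbf{F}$); (ii) show that the cross-entropy $-\int f\log f_\mathbf{F}$ takes the same value for every feasible $f$; and (iii) justify that every feasible $f$ is absolutely continuous with respect to $f_\mathbf{F}$. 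Granting these, Gibbs' inequality $\int f\log(f/f_\mathbf{F})\geq 0$ gives $H(f)\leq -\int f\log f_\mathbf{F}=H(f_\mathbf{F})$ with equality iff $f=f_\mathbf{F}$ a.e., which yields simultaneously the optimality of $f_\mathbf{F}$, its uniqueness, and the value of the maximal entropy.

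For (ii), which carries the value, I would take logarithms: on $L^\mathbf{F}$, $-\log f_\mathbf{F}(x)=-\sum_{i=1}^d\log\mathbf{f}_i(x_i)+\sum_{i=2}^d\log D_i(x_i)+\sum_{i=2}^d\int_{x_{i-1}}^{x_i}g_i(s)\,ds$, and integrate against an arbitrary feasible $f$ term by term. The first sum only sees the one-dimensional marginals and gives $\sum_i H(\mathbf{F}_i)$; the second, using $0<D_i\leq 1$ on the support, gives $-\cj(\mathbf{F})$; for the third I would use Fubini to rewrite $\int f(x)\int_{x_{i-1}}^{x_i}g_i=\int_\R g_i(s)\,\P_f(X_{i-1}<s<X_i)\,ds$, and then the identity $\P_f(X_{i-1}<s<X_i)=\mathbf{F}_{i-1}(s)-\mathbf{F}_i(s)=D_i(s)$ — valid for every feasible $f$ because the $X_i$ are ordered with marginals $\mathbf{F}$ — collapses this to $\int_\R g_iD_i=\int_\R\mathbf{f}_i=1$, contributing $d-1$ in total. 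This is exactly where the ordering hypothesis enters and where the constant $d-1$ is produced, so I expect the computation $-\int f\log f_\mathbf{F}=d-1+\sum_iH(\mathbf{F}_i)-\cj(\mathbf{F})$ to be the clean heart of the argument; being independent of $f$, applying it to $f=f_\mathbf{F}$ also gives the entropy formula.

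Step (i) is a direct but slightly delicate verification of the Markov structure: each kernel $k_i(u,\cdot)$ integrates to $1$ over the admissible range (compute the primitive of $g_i(v)\exp(-\int^v g_i)$ and check its mass tends to $1$ because $\int g_i$ diverges as $v$ approaches the next contact point where $D_i=0$), and an induction on $i$ shows the $i$-th marginal of $f_\mathbf{F}$ is $\mathbf{f}_i$; here I would differentiate the marginal identity and use $D_i'=\mathbf{f}_{i-1}-\mathbf{f}_i$, checking that the boundary contributions at the contact points vanish. For step (iii) the ordering gives $x_1\leq\cdots\leq x_d$ $f$-a.s., and for the contact condition I would note that for each fixed $t$ with $D_i(t)=0$ one has $\P_f(X_{i-1}<t<X_i)\leq D_i(t)=0$; sandwiching any zero of $D_i$ lying in the open interval $(X_{i-1},X_i)$ between two rationals reduces the a priori uncountable bad event to a countable union of such null events, so $f$ is supported on $L^\mathbf{F}$ and $f\ll f_\mathbf{F}$.

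Finally, the first (degenerate) bullet follows from the same cross-entropy computation, which yields the upper bound $H(f)\leq d-1+\sum_iH(\mathbf{F}_i)-\cj(\mathbf{F})$ in full generality; when $\cj(\mathbf{F})=+\infty$ or some $H(\mathbf{F}_i)=-\infty$ this bound is $-\infty$, forcing $H(F)=-\infty$. The point demanding most care — and, together with the boundary analysis in (i), the main obstacle I foresee — is ruling out a $(+\infty)-(+\infty)$ indeterminacy when a heavy-tailed marginal has $H(\mathbf{F}_i)=+\infty$ simultaneously with $\cj(\mathbf{F})=+\infty$; I would handle this by grouping the $x_i$-terms as $-\int\mathbf{f}_i\log(\mathbf{f}_i/D_i)$ and exploiting the coordinatewise lower bound $-\int\mathbf{f}_i\log D_i\geq 1$ (which also re-proves $\cj(\mathbf{F})\geq d-1$, consistent with $H(C_{F_\mathbf{F}})=d-1-\cj(\mathbf{F})\leq 0$), or equivalently by a truncation argument, before invoking Gibbs.
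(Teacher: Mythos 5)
Your argument for the second bullet is essentially correct but follows a genuinely different route from the paper's. The paper never integrates $\log f_{\mathbf{F}}$ against a competitor: it reduces to copulas via Sklar's theorem and the decomposition \reff{eq:entr_decomp}, maps $\cc^{OS}(\mathbf{F})$ bijectively onto the symmetric copulas with fixed multidiagonal $\delta^{\mathbf{F}}$ while tracking the entropy (Lemma \ref{lem:one_to_one}, Propositions \ref{prop:char_cfsym} and \ref{prop:H(C)_H(SFC)}), and then solves the constrained problem by the abstract duality theorem of Borwein--Lewis--Nussbaum, which forces the optimizer into product form before it is identified with $c_\delta$ via Proposition \ref{prop:c_delta_unique_product}. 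Your Gibbs/cross-entropy argument replaces all of this by the observation that $-\log f_{\mathbf{F}}$ is a sum of functions of the constrained statistics, so that $-\int f\log f_{\mathbf{F}}$ is constant on the feasible class; your identity $\P_f(X_{i-1}<s<X_i)=\mathbf{F}_{i-1}(s)-\mathbf{F}_i(s)$, which produces the constant $d-1$, is exactly the mechanism behind the paper's term $J_3$ in Section \ref{sec:appendixA}, and your treatment of the support (reducing the contact condition to a countable union of null events) parallels Lemma \ref{lem:c_zero_Psi}. Your route buys brevity and self-containedness --- no copulas, no infinite-dimensional duality, and existence comes for free since the maximizer is exhibited; the paper's route buys the intermediate Theorem \ref{theo:spec} on maximum entropy copulas with given multidiagonal and the product-form uniqueness of Corollary \ref{cor:f*_uni_prod}, which are wanted for their own sake. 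Be aware, though, that your step (i) is where most of the real work sits: proving that the $i$-th marginal of $f_{\mathbf{F}}$ is $\mathbf{f}_i$ requires the divergence of $\int\ell_i$ at the right endpoint of each component of $\Psi^{\mathbf{F}}_i$ (so each kernel has mass one) and a matching argument for the linear equation $y'=\mathbf{f}_{i-1}-\ell_i y$ at the left endpoint; this is precisely the content of the paper's Lemmas \ref{lem:Bi}, \ref{lem:Ei} and \ref{lem:B_iE_i} and must be written out, not just asserted.

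The one genuine gap is in the first bullet. Deducing it from the upper bound $H(f)\le d-1+\sum_i H(\mathbf{F}_i)-\cj(\mathbf{F})$ presupposes that the cross-entropy splits into the three groups of terms without indeterminacy, and already that $f_{\mathbf{F}}$ is a bona fide density (when some $\mathbf{F}_i$ is singular this fails, though that sub-case is trivial since then no $F\in\cl_d^{OS}(\mathbf{F})$ is absolutely continuous). The problematic regime, which you correctly identify, is $\cj(\mathbf{F})=+\infty$ together with $H(\mathbf{F}_j)=+\infty$ for some $j$, and neither of your patches closes it as stated. Writing $D_i=\mathbf{F}_{i-1}-\mathbf{F}_i$, the grouped quantity $-\int\mathbf{f}_i\log(\mathbf{f}_i/D_i)$ is minus a relative entropy with respect to the measure $D_i(s)\,ds$, whose total mass $\int_{\R}D_i=\E[X_i]-\E[X_{i-1}]$ may be infinite, so this integral can itself be of the form $(+\infty)-(+\infty)$; and the coordinatewise bound $-\int\mathbf{f}_i\log D_i\ge 1$ (which is correct, via $D_i\le 1-\mathbf{F}_i$) controls a term that is already negative and does nothing to absorb the $+\infty$ coming from $H(\mathbf{F}_j)$ --- in particular the unpaired term $H(\mathbf{F}_1)$ is not touched by the grouping at all. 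A truncation argument can presumably be made to work, but it is the crux and is not carried out. The paper proceeds differently here (Section \ref{sec:case-a}): it picks an index $i$ with $\int\mathbf{f}_i\val{\log D_i}=+\infty$, projects onto the bivariate margin $(X_{i-1},X_i)$ using $H((X_{i-1},X_i))\ge H(X)$, and invokes the $d=2$ result of \cite{butucea2013maximum} together with \reff{eq:entr_decomp}; you would need either to import that reduction or to make your truncation precise.
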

The   function   $f_{\mathbf{F}}$   may   be  well   defined   even   if
$\cj({\mathbf{F}}) =  + \infty$ and  it might  even be a density, see
Remark \ref{rem:f_F=density}.  However  in this case the  entropy of the
corresponding cdf is  infinite. The  density $f_{\mathbf{F}}$ has  a product
form on  the domain  $L^\mathbf{F}$: $f_{\mathbf{F}}(x)  = \prod_{i=1}^d
p_i(x_i) \ind_{L^\mathbf{F}}(x)$ for $x=(x_1, \hdots, x_d) \in \R^d$ and
some measurable functions $p_i$. Corollary \ref{cor:f*_uni_prod} asserts
that $f_{\mathbf{F}}$  is the only density  which has a product  form on
the domain $L^\mathbf{F}$ and  whose one-dimensional marginals are given
by $\mathbf{F}$.  This characterization  will be  used in  a forthcoming
paper on nonparametric estimation of $f_{\mathbf{F}}$.

\begin{ex*}
  We  consider the  following example.  Let $+\infty  >\lambda_1> \cdots
  >\lambda_d>0$ and for $1\leq i\leq d$ let $\mathbf{F}_i$ be the cdf of
  the  exponential  distribution  with mean  $1/\lambda_i$  and  density
  $\mathbf{f}_i(t)=\lambda_i \expp{-\lambda_i t}\ind_{\{t>0\}}$.  Notice
  that $\mathbf{F}_{i-1}> \mathbf{F}_i$ on $(0,+\infty )$, so that $L^\mathbf{F}=\{ (x_1,\hdots,x_d) \in \R^d; 0 \leq x_1 \leq \hdots \leq x_d\}$. It is easy to
  check that $\cj(\mathbf{F})<+\infty  $ with $\mathbf{F}=(\mathbf{F}_i,
  1\leq  i\leq  d)$.  Elementary  computations  yield  that the  maximum
  entropy density  of the  order statistic  $(X_1, \ldots,  X_d)$, where
  $X_i$ has distribution $\mathbf{F}_i$, is given by:
\[
f_\mathbf{F}(x_1, \ldots, x_d)
=\ind_{L^\mathbf{F}}(x) \, \lambda_1 \expp{-\Delta_{2}
  x_1} \left(1-\expp{-\Delta_{2}
      x_1}\right)^{\lambda_{2}/\Delta_{2}}\prod_{i=2} ^d \lambda_i \expp{-\Delta_{i+1}
  x_i}\frac{\left(1-\expp{-\Delta_{i+1}
      x_i}\right)^{\lambda_{i+1}/\Delta_{i+1}} }
{\left(1-\expp{-\Delta_{i}
      x_i}\right)^{\lambda_{i-1}/\Delta_{i}} },
\]
where $\Delta_i=\lambda_{i-1}-\lambda_i$ for $1\leq i\leq d+1$ and $\lambda_{d+1}=0$.

In the particular case $\lambda_i=(d-i+1) \lambda$ for some $\lambda>0$,
we get:
\[
f_\mathbf{F}(x_1, \ldots, x_d)
=\ind_{L^\mathbf{F}}(x)\,  d! \, \lambda ^ d \expp{-\lambda x_1}
(1-\expp{-\lambda x_1})^{d-1} \prod_{i=2} ^d
\frac{\expp{-\lambda
      x_i}}
{\left(1-\expp{-\lambda
      x_i}\right)^2}\cdot
\]

By considering the change of variable $u_i=1-\expp{-\lambda x_i}$, we
get the following result. For $1\leq i\leq d$ let $\mathbf{F}_i$ be the cdf of
the $\beta(1, d-i+1)$ distribution   with   density
  $\mathbf{f}_i(t)=(d-i+1) (1-t)^{d-i}\ind_{(0,1)}(t)$.  Notice
  that $\mathbf{F}_{i-1}> \mathbf{F}_i$ on $(0,1 )$. The  maximum
  entropy density  of the  order statistic  $(U_1, \ldots,  U_d)$, where
  $U_i$ has distribution $\mathbf{F}_i$, is given by:
\[
f_\mathbf{F}(u_1, \ldots, u_d)
=\ind_{\{0<u_1<\cdots <u_d<1\}}\,  d! \,
u_1^{d-1} \, \prod_{i=2} ^d
\inv{u_i^2}\cdot
\]
Elementary computations give $H(F_\mathbf{F})=-\log(d!)+ 2d
-(d+1)\sum_{i=1}^d (1/i)$.
\end{ex*}

In  order  to  prove  Theorem  \ref{theo:f*},  we  first  introduce  the
multidiagonal of  a copula.   For a  copula $C$ on  $\R^d$ and  a random
vector $U=(U_{1},\hdots,  U_{d})$ with  cdf $C$,  we consider  its order
statistics $U^{OS}=(U_{(1)}, \hdots,  U_{(d)})$ and define $\delta_{(i)}
$ the  one-dimensional cdf of $U_{(i)}$  for $1\leq i\leq d$.   Then the
multidiagonal of $C$ is defined as  $ \delta_C = (\delta_{(i)}, 1 \leq i
\leq d)$. This provides a generalization  of the diagonal section of the
copula   $C$   which   corresponds   to  the   cdf   $\delta_{(d)}$   of
$U_{(d)}=\max_{1\leq  i\leq d}  U_i$.  The  maximum entropy  copula with
fixed  diagonal  section  is given  in  \cite{butucea2013maximum}.   The
necessary and  sufficient condition for  a $d$-tuple $\delta$ to  be the
multidiagonal  of  a  (absolutely  continuous)  copula  is  provided  by
\citeauthor*{jaworski2008distributions}~\cite{jaworski2008distributions}.    In    order   to    prove   Theorem
\ref{theo:f*}, we  first establish a  one-to-one map between the  set of
copulas  of  order  statistics  with given  marginals  $\mathbf{F}$  and
symmetric copulas with a  fixed multidiagonal $\delta^\mathbf{F}$, which
only  depends  on  $\mathbf{F}$,   see  Lemma  \ref{lem:one_to_one}  and
Proposition \ref{prop:char_cfsym}.  Furthermore,  this map preserves the
absolute  continuity of  the copula,  as well  as the  entropy up  to an
additive  constant,  which depends  only  on  the fixed  one-dimensional
marginals $\mathbf{F}$,  see Proposition  \ref{prop:H(C)_H(SFC)}.  Then,
under a  necessary and  sufficient condition on  the multidiagonal
$\delta$,  we find  the  maximum  entropy  copula with
a given general multidiagonal $\delta$ and provide an explicit formula for
its density, see
Theorem  \ref{theo:spec}.   (Notice  Theorem \ref{theo:spec},  which  is
first established to prove Theorem \ref{theo:f*}, can in fact be seen as
a special case of Theorem \ref{theo:f*}.) The proof relies on the theory
of optimization under infinite dimensional constraints developed by
\citeauthor*{borwein1994entropy}~\cite{borwein1994entropy}. \\

The   rest  of   the  paper   is  organized   as  follows.   In  Section
\ref{sec:not-def},  we  introduce  the  basic  notations  and  give  the
definition   of    the   objects   used   in    later   parts.   Section
\ref{sec:connection} describes  the connection between copulas  of order
statistics  with  fixed  marginals,  and symmetric  copulas  with  fixed
multidiagonals.  In Section  \ref{sec:max_ent_copula}, we  determine the
maximum  entropy copula  with  fixed multidiagonal.  Since  we obtain  a
symmetric copula as a result, this is also the maximum entropy symmetric
copula with fixed  multidiagonal. In     Section
\ref{sec:max_entr_dens_ord_stat}, we use the one-to-one map
between the two sets of copulas established
in Section \ref{sec:connection}  to  give
the maximum  entropy copula  of order     statistics     with     fixed     marginals
. We finally obtain  the density of the maximum entropy  distribution for
order     statistics     with     fixed     marginals  by composing the
maximum entropy copula with the marginals.  Section \ref{sec:proof}  contains the
detailed proofs of  Theorem   \ref{theo:spec} and other results from
Section \ref{sec:max_ent_copula}. Section \ref{sec:not-appendix} collects
the main notations of the paper to facilitate reading.

 \section{Notations and definitions} \label{sec:not-def}

\subsection{Notations in $\R^d$ and generalized inverse}
\label{sec:not-J-1}
For a  Borel set $A\subset  \R^d$, we  write $\val{A}$ for  its Lebesgue
measure.   For  $x=(x_1,  \ldots,  x_d)\in \R^d$  and  $y=(y_1,  \ldots,
y_d)\in  \R ^d$,  we  write $x\leq  y$  if $x_i\leq  y_i$
 for  all $1\leq  i\leq  d$.  We  define $\min  x=\min
\{x_i, \, 1\leq  i\leq d\}$ and $\max x=\max \{x_i,  \, 1\leq i\leq d\}$
for $x=(x_1,  \ldots, x_d)\in  \R^d$. If $J$  is a  real-valued function
defined  on $\R$,  we  set $J(x)=(J(x_1),  \ldots,  J(x_d))$.  We  shall
consider the following subsets of $ \R^d$:
\[
S=\{(x_1, \ldots, x_d)\in
   \R^d, x_1\leq \dots \leq  x_d\} \quad\text{and}\quad
\triangle=S\cap I^d,
\]
with $I=[0,1]$. In what follows, usually $x,y$ will belongs to $\R^d$,  and $s,t$ to $\R$
or $I$. For a set $A \subset \R$, we note by $A^c=\R \setminus A$ its complementary set.

If $J$ is a bounded non-decreasing c\`{a}d-l\`{a}g function defined on $\R$.
Its generalized inverse $J^{-1} $ is given
by $J^{-1}(t)= \inf\{s \in \R; J(s)\geq t\}$, for $t\in
\R$, with the convention that $\inf \emptyset=+\infty $ and $ \inf \R = - \infty$. We have for $s,t\in  \R$:
\begin{equation}
   \label{eq:inv}
J(t)\geq s \Leftrightarrow t\geq J^{-1} (s),
\quad J^{-1} \circ J(t)\leq t \quad\text{and}\quad
J\circ J^{-1}\circ J(t)=J(t) .
\end{equation}
We define  the set of
points where $J$ is increasing on their left:
\begin{equation}
   \label{eq:def-Ig}
I_g(J)=\{t\in \R; u<t \Leftrightarrow J(u)<J(t)\}.
\end{equation}
We have:
\begin{equation} \label{eq:IgJc}
  \ind_{(I_g(J))^c} \, dJ =0\quad  \text{a.e.},
\end{equation}
\begin{equation}
   \label{eq:J-1Ig}
J^{-1}(\R) \subset I_g(J) \cup\{\pm \infty \}
 \end{equation}
 and for $s\in  \R$,
$t\in I_g(J)$:
\begin{equation}
   \label{eq:inv2}
J(t)\leq s \Leftrightarrow t\leq J^{-1} (s) \quad\text{and}\quad
J^{-1} \circ J(t)= t.
\end{equation}
Notice that if $J$ is continuous in addition, then we have for $t \in J(\R)$:
\begin{equation} \label{eq:cont_J}
  J \circ J^{-1} (t) = t.
\end{equation}

\subsection{Cdf and copula} \label{sec:cdf_copula}
Let $X=(X_1,
\ldots, X_d)$ be a random vector on $\R^d$. Its cumulative  distribution
function  (cdf), denoted by $F$ is defined by: $F(x)=\P(X\leq x)$,
$x\in \R^d$. The corresponding one-dimensional marginals cdf are $(F_i,
1\leq i\leq d)$ with $F_i(t)=\P(X_i\leq t)$, $t\in \R$.
The cdf $F$ is called a copula if $X_i$ is uniform on $I=[0,1]$ for all $1\leq i\leq d$.
(Notice a copula is characterized by its values on $I^d$ only.)

We define $\cl_d$  the set of cdf on $\R^d$,  $\cl^{1c}_d \subset \cl_d$
the subset  of cdf  whose one-dimensional
marginals cdf  are continuous,  $\cc \subset  \cl^{1c}_d$ the  subset of
copulas. We set $\cl_d^0$ (resp. $\cc^0$) the subset of absolutely continuous
cdf (resp. copulas) on $\R^d$.

Let us define for a cdf $F$ with one-dimensional marginals $(F_i,
1\leq i\leq d)$ the function $C_F$ defined on $I^d$:
  \begin{equation} \label{eq:inversion_formula}
      C_F(y)=F(F_1^{-1}(y_1),\hdots,F_d^{-1}(y_d)), \quad y=(y_1,\hdots,y_d) \in I^d.
  \end{equation}
If $F\in \cl^{1c}_d $, then $C_F$ defined by \reff{eq:inversion_formula} is a copula thanks to \reff{eq:cont_J}. According to Sklar's theorem,
$F$ is then  completely characterized by its one-dimensional marginals cdf  $(F_i, 1\leq i\leq d)$ and the associated copula $C_F$  which
contains all information on the dependence:
\begin{equation}
\label{eq:sklar}
     F(x)=C_F\left(F_{1}(x_1),\hdots , F_{d}(x_d)\right) , \quad x=(x_1,\hdots,x_d) \in \R^d.
  \end{equation}
Equivalently,  if $X=(X_1, \ldots, X_d)$ has cdf $F$,
then $C_F$ is the cdf of  the random vector:
  \begin{equation} \label{eq:pro_int_tr}
     (F_1(X_1),\hdots,F_d(X_d)).
  \end{equation}

\subsection{Order statistics} \label{sec:order_stat}
 For $F\in \cl_d$, we write $\P_F$ the distribution of a random vector
 $X=(X_1, \hdots , X_d)$ with cdf $F$. A cdf $F\in \cl_d$ is a cdf of
 order statistics (and we shall say that $X$ is a vector of order
 statistics) if $    \P_F(X_1 \leq X_2 \leq \hdots \leq X_d)=1$.
Let us denote
by $ \cl_d^{OS} \subset \cl_d^{1c}$ the set of all cdf of order statistics with continuous
one-dimensional marginals cdf. The $d$-tuples  $(F_{i}, 1 \leq i \leq d)$ of marginal cdf's then verify $F_{i-1} \geq
F_{i}$ for all $2\leq i \leq d$ . Let $\cf_d$ be the set of $d$-tuples of continuous one-dimensional cdf's
compatible with the marginals cdf of order statistics:
\begin{equation} \label{eq:def_Fd}
\cf_d =\{\mathbf{F}=(\mathbf{F}_{i}, 1\leq i\leq d) \in \left(\cl^{1c}_1\right)^d;
\quad   \mathbf{F}_{i-1} \geq
\mathbf{F}_{i}, \, \forall 2 \leq i \leq d \}.
\end{equation}
For a given $\mathbf{F}= (\mathbf{F}_{i}, 1\leq i\leq d)$ in $\cf_d$, we define the set of cdf's $F$ of order
statistics with marginals cdf $\mathbf{F}$:
  \begin{equation} \label{eq:def_LOSF}
   \cl^{OS}_d(\mathbf{F}) = \{ F \in \cl^{OS}_d; \quad
   F_{i}=\mathbf{F}_i, \, 1\leq i\leq d\}.
  \end{equation}
  If $\mathbf{F} \in \cf_d$, then we have $ \cl^{OS}_d(\mathbf{F}) \neq \emptyset$, since the cdf of  $(\mathbf{F}_{1}^{-1}(U), \hdots, \mathbf{F}_{d}^{-1}(U))$,  $U$ uniformly distributed on $I$,  belongs to $\cl^{OS}_d(\mathbf{F})$.  We define $ \cc^{OS}(\mathbf{F})$ the set of copulas of order statistics
    with marginals $\mathbf{F}$:
\begin{equation} \label{eq:def_COSF}
    \cc^{OS}(\mathbf{F}) = \{C_F \in \cc; F \in \cl^{OS}_d(\mathbf{F}) \}.
\end{equation}
According to Sklar's  theorem, the map $F\mapsto C_F$  is a bijection between
$\cl^{OS}_d(\mathbf{F})$ and $\cc^{OS}(\mathbf{F})$ if $\mathbf{F} \in \cf_d$.

\subsection{Entropy}
  The Shannon-entropy for a
 cdf  $F \in \cl_d$ is given by:
\begin{equation}
   \label{eq:def-entropy}
H(F)= \begin{cases}
   -\infty & \text{ if } F \in \cl_d \setminus \cl_d^0, \\
-\int_{\R^d} f \log\left(f\right)  & \text{ if } F \in \cl_d^0,
\end{cases}
\end{equation}
with $f$ the density of $F$. We will use the notation $H(X)=H(F)$ if $X$
is a random vector with cdf $F$ and $H(f)=H(F)$ if $F$ has density $f$.
The entropy of any $F \in \cl^{1c}_d$  can be decomposed into the
entropy of the one-dimensional marginals cdf $(F_i, 1\leq i\leq d)$ and
the entropy of the associated copula $C_F$,  (see \cite{Zhao2011628}):
\begin{equation} \label{eq:entr_decomp}
     H(F)=\sum_{i=1} H(F_{i}) + H(C_F).
\end{equation}

For $\mathbf{F}=(\mathbf{F}_{i}, 1\leq i\leq d)  \in \cf_d$, we define
$\cj(\mathbf{F})$ taking values in $[0, +\infty ]$ by:
\begin{equation} \label{eq:cj_delta}
 \cj(\mathbf{F})=\sum_{i=2}^d \int_{\R}
 \mathbf{F}_{i}(dt)\,
 \val{\log\left(\mathbf{F}_{i-1}(t)-\mathbf{F}_{i}(t) \right)} .
\end{equation}
Our aim is
to find  the cdf  $F^* \in  \cl^{OS}_d(\mathbf{F})$ which  maximizes the
entropy  $H$.  We  shall  see that this   is  possible  if  and  only  if  $
\cj(\mathbf{F})$ is finite.   From an information theory  point of view,
this  is  the  distribution  which  is  the  least  informative  among
distributions of  order statistics with given  one-dimensional marginals
cdf $\mathbf{F}$.   Since the vector of  marginal distribution functions
$\mathbf{F}$ is  fixed, thanks to \reff{eq:entr_decomp},  we notice that
$H(F)$ is maximal on $\cl^{OS}_d(\mathbf{F})$ if and only if $H(C_F)$ is
maximal  on $\cc^{OS}(\mathbf{F})$.  Therefore we  focus on  finding the
copula $C^* \in \cc^{OS}(\mathbf{F}) $  which maximizes the entropy $H$.
We   will   give    the   solution   of   this    problem   in   Section
\ref{sec:max_entr_dens_ord_stat}  under  some additional  hypotheses  on
$\mathbf{F}$.

 \section{Symmetric copulas with given order statistics} \label{sec:connection}

   In this Section, we introduce an operator on the set $\cc^{OS}(\mathbf{F})$ of
   copulas of order statistics with fixed marginals cdf $\mathbf{F}$. This operator
   assigns to a copula $C \in \cc^{OS}(\mathbf{F})$
   the copula of the exchangeable random vector associated to the order statistics
   with marginals cdf $\mathbf{F}$ and copula $C$. We show that this operator is a bijection
   between $\cc^{OS}(\mathbf{F})$ and a set of symmetric copulas which can be characterized by their
   multidiagonal, which is a generalization of the well-known diagonal section of copulas.
   This bijection has good properties with respect to the entropy $H$, giving us
   a problem equivalent to maximizing $H$ on $\cc^{OS}(\mathbf{F})$. We shall solve this problem in
   Section \ref{sec:max_ent_copula}.

\subsection{Symmetric copulas} \label{sec:sym_copulas}
For $x=(x_1, \ldots, x_d)\in \R^d$ we define $x^{OS}=(x_{(1)}, \ldots,
x_{(d)})$ the ordered vector (increasing order) of $x$, where $x_{(1)}\leq \dots \leq
x_{(d)}$ and $\sum_{i=1} ^d \hat \delta_{x_i}=\sum_{i=1} ^d
\hat \delta_{x_{(i)}}$, with $\hat \delta_t$ the Dirac mass at $t\in \R$.

Let $\cs_d$ be the set  of permutations on $\{1,\hdots, d\}$.
For $x=(x_1, \ldots, x_d)\in \R^d$ and $\pi\in \cs_d$, we set
$x_\pi =(x_{\pi(1)},\hdots,x_{\pi(d)})$.
A function $h$ defined on $\R^d$ is symmetric if $h(x_\pi)=h(x)$ for all
$\pi\in  \cs_d$.  A  random  vector  $X$  taking  values  in  $\R^d$  is
exchangeable if $X_\pi$ is distributed as  $X$ for all $\pi\in
\cs_d$. In particular a
random vector $X$ taking values in $\R^d$ is exchangeable if and only if
its cdf is symmetric.  Let  $\cl_d^{sym}$ (resp. $\cc^{sym}$) denote the
subset of $\cl_d$ (resp. $\cc$) of symmetric cdf (resp. copulas) on $\R^d$.

Let $F\in
\cl_d^{1c}$ and define its symmetrization $F^{sym}\in \cl_d^{sym}$ by:
\begin{equation}
   \label{eq:Fsym}
  F^{sym}(x)=\inv{d!}\sum_ {\pi\in \cs_d} F(x_\pi), \quad x\in \R^d.
\end{equation}
In particular, if $X$ is a  random  vector  taking  values  in  $\R^d$
with cdf $F$  and $\Pi$ is a random variable independent of $X$,
 uniformly distributed on $\cs_d$, then $X_\Pi$ is exchangeable with
cdf $F^{sym}$.

We define the following operator on the set of copulas of order
statistics.
\begin{defi} \label{defi:SF}
  Let $  \mathbf{F} \in \left(\cl^{1c}_1\right)^d$.  For $C\in \cc  $ we
  define  $S_\mathbf{F}(C)$ as  the  copula of  the exchangeable  random
  variable   $X_\Pi$,   where   $X$   is  a random vector on $\R^d$   with
  one-dimensional marginals cdf $ \mathbf{F}  $ and copula $C$ and $\Pi$
  is an independent random variable  uniform on $\cs_d$.
\end{defi}
The application
  $S_\mathbf{F}$ is well-defined on $\cc $ and takes values in $\cc^{sym}$.
In the above definition,  with $ \mathbf{F}=(\mathbf{F}_i, 1\leq i\leq
d)$, the one-dimensional marginals cdf  of $X_\Pi$  are equal to:
\begin{equation}\label{eq:def_G}
     G=\inv{d} \sum_{i=1}^d \mathbf{F}_i.
\end{equation}
Since the one-dimensional marginals  cdf $ \mathbf{F}_i$ are continuous,
we get that $G$  is continuous and thus the cdf  of $X_\Pi$ belongs
to $\cl_d^{1c}$. In particular, thanks to Sklar's theorem, the copula of
$X_ \Pi$ is indeed uniquely defined.

Combining      \reff{eq:sklar},     \reff{eq:inversion_formula}      and
\reff{eq:Fsym}, we can give an explicit formula for $S_\mathbf{F}(C)$:
      \begin{equation} \label{eq:SFC:expl}
         S_\mathbf{F}(C)(u)= \inv{d!} \sum_{\pi \in \cs_d}
         C\left(\mathbf{F}_1(G^{-1}(u_{\pi(1)})), \hdots,
           \mathbf{F}_d(G^{-1}(u_{\pi(d)}))\right), \quad u\in I^d.
      \end{equation}

\begin{rem}
  The copula  $S_\mathbf{F}(C)$ is  not  equal in general to  the
  exchangeable copula $C^{sym}$ defined similarly to \reff{eq:Fsym} by
  $C^{sym}=(1/d!)\sum_{\pi \in \cs_d} C(x_\pi)$. However this is the case
  if the one-dimensional marginals cdf $\mathbf{F}_i$ are all equal, in
  which case  $\mathbf{F}_i=G$ for all $1 \leq i \leq d$.
\end{rem}

If $X$  is a  random vector  on $\R^d$,  let $X^{OS}  = (X_{(1)},\hdots,
X_{(d)})$ be  the order  statistics of  $X$. The
proof of the next Lemma is elementary.

    \begin{lem} \label{lem:exch_OS} Let $X$ be a random vector on $\R^d$
      with cdf  $F\in \cl_d$  and $\Pi$  a  random variable independent of $X$,
      uniformly distributed on $\cs_d$. We have:
    \begin{itemize}
     \item If $F \in \cl_{d}^{OS}$, then a.s. $(X_\Pi)^{OS}=X$
     \item If $F \in \cl_{d}^{sym}$, then $(X^{OS})_\Pi$ has the same distribution as $X$.
    \end{itemize}
   \end{lem}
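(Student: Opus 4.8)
The plan is to reduce both items to the single elementary fact that, for any $x\in\R^d$ and any $\pi\in\cs_d$, the vector $x_\pi$ is merely a rearrangement of $x$ (recorded by $\sum_i\hat\delta_{x_i}=\sum_i\hat\delta_{x_{(i)}}$ in the definition of $x^{OS}$). Consequently $x$ and $x_\pi$ have the same order statistics, $(x_\pi)^{OS}=x^{OS}$, and $g(x_\pi)=g(x)$ for every symmetric function $g$. For the first item I would argue pathwise: since $F\in\cl_d^{OS}$, by definition $X$ is a.s.\ ordered, i.e.\ $X=X^{OS}$ a.s.; and since $X_\Pi$ is a.s.\ a rearrangement of $X$, sorting-invariance gives $(X_\Pi)^{OS}=X^{OS}=X$ a.s. This needs neither integration nor the independence of $\Pi$.

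For the second item I would work at the level of distributions. Fix a bounded measurable $h\colon\R^d\to\R$ and introduce its symmetrization $g(y)=(1/d!)\sum_{\pi\in\cs_d}h(y_\pi)$, which is symmetric. Using the independence of $\Pi$ from $X$ and the uniformity of $\Pi$ on $\cs_d$, I would write
\[
\E[h((X^{OS})_\Pi)]=\frac{1}{d!}\sum_{\pi\in\cs_d}\E[h((X^{OS})_\pi)]=\E[g(X^{OS})].
\]
Since $X^{OS}$ is a.s.\ a rearrangement of $X$ and $g$ is symmetric, $g(X^{OS})=g(X)$ a.s., so the right-hand side equals $\E[g(X)]=(1/d!)\sum_{\pi\in\cs_d}\E[h(X_\pi)]$. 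Finally, $F\in\cl_d^{sym}$ means $X$ is exchangeable, so $X_\pi$ is distributed as $X$ for every $\pi$, whence $\E[h(X_\pi)]=\E[h(X)]$ and the whole expression collapses to $\E[h(X)]$. As $h$ is arbitrary, $(X^{OS})_\Pi$ and $X$ share the same law.

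The only step deserving care — and the closest thing to an obstacle — is the identity $g(X^{OS})=g(X)$: it rests on $x^{OS}$ being a genuine permutation of $x$ for \emph{every} $x$, which holds even in the presence of ties, so no almost-sure distinctness of the coordinates of $X$ is required. The rest is bookkeeping with the uniform average over $\cs_d$, which is exactly why the statement is elementary.
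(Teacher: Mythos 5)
Your proof is correct. The paper itself gives no argument for this lemma --- it simply declares the proof elementary and omits it --- so there is nothing to compare against; your two-step argument (pathwise sorting-invariance for the first item, conditioning on $\Pi$ plus symmetrization of a test function and exchangeability for the second) is precisely the kind of elementary verification the authors had in mind, and the remark that ties cause no trouble because $x^{OS}$ is always a genuine rearrangement of $x$ is the right point to flag.
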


For   $\mathbf{F}  \in \cf_d$, we define  the set  of
copulas  $\cc^{sym}(\mathbf{F})  \subset  \cc^{sym}$ as  the  image  of
$\cc^{OS}(\mathbf{F})$ by the symmetrizing operator $S_\mathbf{F}$:
\begin{equation}
   \label{eq:def-CFsym}
    \cc^{sym}(\mathbf{F}) = S_{\mathbf{F}}(\cc^{OS}(\mathbf{F})).
\end{equation}

The following Lemma is one of the main result of this section.

\begin{lem}
\label{lem:one_to_one}
Let $\mathbf{F} \in \cf_d$.  The symmetrizing operator $S_{\mathbf{F}}$ is
a  bijection  from $\cc^{OS}(\mathbf{F})$  onto
$\cc^{sym}(\mathbf{F})$.
\end{lem}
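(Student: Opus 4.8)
The plan is to notice that surjectivity is free and to spend all the effort on injectivity. By the very definition \reff{eq:def-CFsym}, the set $\cc^{sym}(\mathbf{F})$ is the image $S_{\mathbf{F}}(\cc^{OS}(\mathbf{F}))$, so $S_{\mathbf{F}}$ maps $\cc^{OS}(\mathbf{F})$ onto $\cc^{sym}(\mathbf{F})$ by construction. Hence it only remains to prove that the restriction of $S_{\mathbf{F}}$ to $\cc^{OS}(\mathbf{F})$ is one-to-one.

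First I would transport the problem from copulas to cdf's via Sklar's theorem. Since $F\mapsto C_F$ is a bijection from $\cl^{OS}_d(\mathbf{F})$ onto $\cc^{OS}(\mathbf{F})$, it is enough to show that the map $F\mapsto F^{sym}$ is injective on $\cl^{OS}_d(\mathbf{F})$. The key identity here is that, for $F\in \cl^{OS}_d(\mathbf{F})$, the copula $S_{\mathbf{F}}(C_F)$ is exactly the copula of $F^{sym}$: if $X$ has cdf $F$ and $\Pi$ is an independent uniform permutation, then $X_\Pi$ has cdf $F^{sym}$ by \reff{eq:Fsym}, and $S_{\mathbf{F}}(C_F)$ is its copula by Definition \ref{defi:SF}. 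Moreover, all one-dimensional marginals of $F^{sym}$ equal the common continuous cdf $G$ of \reff{eq:def_G}, which does not depend on $F$. Consequently, if $F_1,F_2\in \cl^{OS}_d(\mathbf{F})$ satisfy $S_{\mathbf{F}}(C_{F_1})=S_{\mathbf{F}}(C_{F_2})$, then $F_1^{sym}$ and $F_2^{sym}$ have the same copula and the same continuous one-dimensional marginals $G$; applying the Sklar representation \reff{eq:sklar} yields $F_1^{sym}=F_2^{sym}$.

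The final step is to recover $F_j$ from $F_j^{sym}$, and this is where the order statistics structure enters decisively. Let $X^{(j)}$ have cdf $F_j$ and let $\Pi$ be an independent uniform permutation, so that $X^{(j)}_\Pi$ has cdf $F_j^{sym}$; since $F_1^{sym}=F_2^{sym}$, the vectors $X^{(1)}_\Pi$ and $X^{(2)}_\Pi$ have the same law. The ordering map $x\mapsto x^{OS}$ being Borel measurable, the images $(X^{(1)}_\Pi)^{OS}$ and $(X^{(2)}_\Pi)^{OS}$ then also have the same law. Now each $F_j$ belongs to $\cl^{OS}_d$, so the first part of Lemma \ref{lem:exch_OS} gives $(X^{(j)}_\Pi)^{OS}=X^{(j)}$ almost surely. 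Combining these facts, $X^{(1)}$ and $X^{(2)}$ have the same law, that is $F_1=F_2$, whence $C_{F_1}=C_{F_2}$ by the Sklar bijection, proving injectivity.

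I expect the only delicate point to be the bookkeeping in this last paragraph. One must use that the reconstruction $X=(X_\Pi)^{OS}$ holds precisely because $X$ is already a vector of order statistics, so that symmetrizing by $\Pi$ and then re-ordering returns $X$ unchanged; and one must pass a deterministic measurable map through an equality in law, which is legitimate. Everything else reduces to invoking Sklar's theorem, applicable because the symmetrized marginals $G$ are continuous.
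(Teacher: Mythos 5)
Your proof is correct and follows essentially the same route as the paper: surjectivity holds by the definition of $\cc^{sym}(\mathbf{F})$ as the image of $S_{\mathbf{F}}$, and injectivity is obtained by noting that equal symmetrized copulas plus the common continuous marginal $G$ force $X_\Pi$ and $Y_\Pi$ to have the same law, then recovering the original order statistics via $(X_\Pi)^{OS}=X$ from Lemma \ref{lem:exch_OS}. No gaps.
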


\begin{proof}
  Let       $C_1,       C_2       \in       \cc^{OS}(\mathbf{F})$       with
  $S_\mathbf{F}(C_1)=S_\mathbf{F}(C_2)$. Let  $X$ and $Y$ be   random
  vectors   with    one-dimensional marginals cdf   $\mathbf{F}$   and    copula   $C_1,C_2$
  respectively. Since $C_1,  C_2 \in \cc^{OS}(\mathbf{F})$, we  get that $X$
  and $Y$ are order statistics.  Notice $X_\Pi$ and $Y_\Pi$ have the same
  one-dimensional marginals according to \reff{eq:def_G} and same copula
  given by $S_\mathbf{F}(C_1)=S_\mathbf{F}(C_2)$.  Therefore $X_\Pi$ and
  $Y_\Pi$ have  the same  distribution. Thus, their  corresponding order
  statistics   $(X_\Pi)^{OS}$   and   $(Y_\Pi)^{OS}$   have   the   same
  distribution. By Lemma \ref{lem:exch_OS} we  get that $X$ and $Y$ have
  the same distribution as well, which implies $C_1=C_2$.
\end{proof}

\begin{rem}
  We have in general  $\cc^{sym}(\mathbf{F}) \neq \cc^{OS}(\mathbf{F}) \cap
  \cc^{sym}$.  One  exception  being  when  the  marginals  cdf's  ${\mathbf{F}}_{i}$ are all equal. In this case, both sides
  reduce to one  copula which is the  Fr\'{e}chet-Hoeffding upper bound
  copula: $C^+(u) = \min u$, $u\in I^d$.
\end{rem}

\subsection{Multidiagonals and characterization of $\cc^{sym}(\mathbf{F})$} \label{sec:multidiag}

Let $C\in \cc$ be  a copula and $U$ a random vector  with cdf $C$. The
map $t\mapsto C(t,\hdots,t)$ for $t \in I$, which is called the
diagonal section of $C$,  is the cdf of $\max U$. We shall consider a
generalization of the diagonal section of $C$ in the next Definition.

\begin{defi}
   \label{defi:diag}
Let $C\in \cc$ be  a copula on $\R^d$ and $U$ a random vector  with cdf
$C$. The
  \textit{multidiagonal} of the copula $C$,
  $\delta_C=(\delta_{(i)},1\leq i\leq d)$, is the $d$-tuple of
  the one-dimensional marginals cdf of $U^{OS}=(U_{(1)},\hdots, U_{(d)})$ the order statistics of
$U$: for $1\leq i\leq d$
\[
\delta_{(i)}(t)=\P(U_{(i)}\leq t), \quad t\in I.
\]
\end{defi}

We denote by $\cd=\{\delta_C; C \in
  \cc\}$ the set of multidiagonals. Notice that $\cd \subset \cf_d$, see Remark \ref{rem:delta_Lipshitz}.
For $\delta\in \cd$ a multidiagonal, we define  $\cc_\delta=\{C;
\delta_C = \delta\}$ the set of copulas with multidiagonal
$\delta$.

A characterization of the set $\cd$ is given by Theorem 1 of
  \cite{jaworski2008distributions}: a vector of functions
  $\delta=(\delta_{(1)},\hdots, \delta_{(d)})$ belongs to $\cd$ if and
  only if $\delta_{(i)} \in \cl_1$ and the following conditions hold:
 \begin{equation} \label{eq:d_cond1}
  \delta_{(i)}\geq \delta_{(i+1)}, \quad  1\leq i \leq d-1,
  \end{equation}
  \begin{equation} \label{eq:sum_delta}
  \sum_{i=1}^d \delta_{(i)}(s) = ds, \quad \quad 0 \leq s \leq 1.
 \end{equation}

 \begin{rem}       \label{rem:delta_Lipshitz}        The       condition
   \reff{eq:sum_delta}  implies that  $\delta_{(i)}  \in \cl_1^{1c}$,  $
   1\leq i \leq d$, moreover they  are $d$-Lipschitz. Also, it is enough
   to know $d-1$  functions from $\delta_{(i)}$, $1 \leq i  \leq d$, the
   remaining one is implicitly defined by \reff{eq:sum_delta}. Condition
   \reff{eq:d_cond1} along with the continuity of $\delta_{(i)}$ implies
   that any multidiagonal $\delta_{C}$ is compatible with the continuous
   marginal distributions of an order statistics, therefore $\cd \subset
   \cf_d$.
    \end{rem}

  \begin{rem} \label{rem:dlogd}
   Since $\delta_{(i)}$, $1 \leq i \leq d$ are non-decreasing
   and $d$-Lipschitz, we have for almost every $t\in I$: $0 \leq
      (\delta_{(i)})'(t) \leq d$ and thus
      $\val{(\delta_{(i)})'(t)
        \log((\delta_{(i)})'(t))} \leq d\log(d)$ for $d \geq 2$. We deduce
        that for $d \geq 2$:
    \begin{equation} \label{eq:delta_bound}
     \val{H(\delta_{(i)})} \leq d\log(d).
    \end{equation}
  \end{rem}

    \begin{rem}
Let $C\in \cc^{sym}$ be  a symmetric copula on $\R^d$ and $U$ a random vector  with cdf
$C$. We  check that the multidiagonal    $\delta_C=(\delta_{(i)},1\leq i\leq d)$
can be  expressed in terms of  the diagonal sections  $(C_{\{i\}}, 1\leq i\leq d )$ where  for $1
      \leq i \leq d $:
  \[
C_{\{i\}}(t)=\P\left(\max_{1\leq k \leq i} {U}_k \leq t \right)
={C}(\underbrace{t,\hdots,t}_{i \text{ terms }},
\underbrace{1,\hdots,1}_{d-i \text{ terms}}), \quad t\in I.
  \]
According to 2.8 of \cite{jaworski2008distributions}, we have for $1\leq i\leq d$:
  \[
   \delta_{(i)}(t)=\sum_{j=i}^d (-1)^{j-i}  \binom{j-1}{i-1}
   \binom{d}{j} {C}_{\{j\}}(t), \quad t\in I.
  \]
  Conversely, we can express the functions $(C_{\{i\}}, 1\leq i\leq d )$
with $\delta_C$. For $1\leq i\leq d$ and $t\in I$, we have:
  \begin{align*}
    C_{\{i\}}(t) & = \P\left(\max_{1\leq k \leq i} {U}_k \leq t \right) \\
                         & = \sum_{j=i}^d \P\left({U}_{(j)}\leq t \big|
                         \max_{1\leq k \leq i} {U}_k = {U}_{(j)} \right)
                         \P\left(\max_{1\leq k \leq i} {U}_k = {U}_{(j)}\right) \\
                         & = \sum_{j=i}^d \frac{\binom{j-1}{i-1}}{\binom{d}{i}}\delta_{(j)}(t) ,
  \end{align*}
  where we used the definition of $\delta_{(i)}$ and
  the exchangeability of ${U}$ for the third equality.
 \end{rem}

 The next  technical Lemma  will be used  in forthcoming  proofs. Recall
 that  $J^{-1}$  denotes the  generalized  inverse  of a  non-decreasing
 function $J$, see Section \ref{sec:not-J-1} for its definition and
 properties, in particular,  $J^{-1} \circ J (t)\leq t$ for $t\in \R$.  Recall also
 that  for  $x=(x_1,  \ldots,  x_d)\in \R^d$,  we  write  $G(x)=(G(x_1),
 \ldots, G(x_d))$.

   \begin{lem} \label{lem:G_Ginv}
     Let $X=(X_1, \ldots, X_d)$ be a random vector on $\R^d$ with
     one-dimensional margi\-nals cdf $(F_i, 1\leq i\leq d)\in (\cl_1)^d$. Set $G=
     \sum_{i=1}^d F_i/d$. We have for $1 \leq i \leq d$:
     \begin{equation} \label{eq:FiG-1G=Fi}
       \P(X_i\leq G^{-1} \circ G(t))=\P(X_i\leq t), \quad t\in \R, \quad\text{that is}\quad
   F_i\circ G^{-1}\circ G =F_i.
     \end{equation}
   We also have for $x\in \R^d$:
     \begin{equation} \label{eq:G_Ginv_all}
       \P(G(X) \leq x) =\P(X\leq
       G^{-1}(x) ).
     \end{equation}
   \end{lem}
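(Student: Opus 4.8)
The plan is to treat the two identities separately, proving the marginal identity \reff{eq:FiG-1G=Fi} by a direct monotonicity argument and the joint identity \reff{eq:G_Ginv_all} by reducing it, coordinate by coordinate, to the equivalence \reff{eq:inv2} which holds on $I_g(G)$.

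For \reff{eq:FiG-1G=Fi}, fix $t\in\R$ and set $t'=G^{-1}\circ G(t)$. By \reff{eq:inv} we have $t'\leq t$, and the relation $J\circ J^{-1}\circ J=J$ applied to $G$ gives $G(t')=G\circ G^{-1}\circ G(t)=G(t)$. Writing $G=\frac1d\sum_{j=1}^d F_j$, the equality $G(t')=G(t)$ reads $\sum_{j=1}^d\bigl(F_j(t)-F_j(t')\bigr)=0$; since $t'\leq t$ each summand is nonnegative and hence each vanishes. In particular $F_i(t')=F_i(t)$, which is exactly $F_i\circ G^{-1}\circ G(t)=F_i(t)$, that is $\P(X_i\leq G^{-1}\circ G(t))=\P(X_i\leq t)$. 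Note that this step uses no continuity of the marginals.

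For \reff{eq:G_Ginv_all}, the key observation is that the law $dF_i$ of $X_i$ does not charge $(I_g(G))^c$. Indeed, since all $F_j$ are nondecreasing, $dF_i\leq\sum_{j=1}^d dF_j=d\,dG$ as measures, so by \reff{eq:IgJc} we get $dF_i\bigl((I_g(G))^c\bigr)\leq d\,dG\bigl((I_g(G))^c\bigr)=0$; hence $X_i\in I_g(G)$ almost surely, for every $1\leq i\leq d$. On the almost sure event $\bigcap_{i=1}^d\{X_i\in I_g(G)\}$ we may apply the equivalence in \reff{eq:inv2} with $t=X_i$ and $s=x_i$, obtaining $G(X_i)\leq x_i\Leftrightarrow X_i\leq G^{-1}(x_i)$ for each $i$. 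Thus the events $\{G(X)\leq x\}$ and $\{X\leq G^{-1}(x)\}$ coincide up to a $\P$-null set, and \reff{eq:G_Ginv_all} follows by taking probabilities.

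The only delicate point is this reduction: the coordinatewise equivalence $G(X_i)\leq x_i\Leftrightarrow X_i\leq G^{-1}(x_i)$ can fail at points where $G$ is not strictly increasing on the left, so one genuinely needs to know that $X_i$ avoids $(I_g(G))^c$ almost surely. This is exactly what the domination $dF_i\leq d\,dG$ together with \reff{eq:IgJc} supplies, and it is the heart of the argument. Once it is in place, the passage from the $d$ coordinatewise identities to the joint identity is immediate, since the symmetric difference of the two joint events is contained in $\bigcup_{i=1}^d\{X_i\in(I_g(G))^c\}$, a null set.
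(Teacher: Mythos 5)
Your proof is correct and follows essentially the same route as the paper's: the first identity comes from $G\circ G^{-1}\circ G=G$ together with the observation that equality of averages of ordered nondecreasing terms forces equality term by term, and the second comes from showing that a.s.\ $X_i\in I_g(G)$ and then invoking \reff{eq:inv2} coordinatewise. The only (minor) divergence is in how you establish that $X_i$ a.s.\ avoids $(I_g(G))^c$: you use the measure domination $dF_i\leq d\,dG$ together with \reff{eq:IgJc} applied to $G$, whereas the paper uses the set identity $I_g(G)=\bigcup_{1\leq i\leq d} I_g(F_i)$ and the fact that a.s.\ $X_i\in I_g(F_i)$; your variant is equally valid and arguably sidesteps the small pigeonhole argument needed for the reverse inclusion in that set identity.
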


    \begin{proof}
     Since $G$ is the average of the non-decreasing functions $F_i$, if
     $G(s)=G(s')$ for some $s,s'\in \R$, then we have $F_i(s)=F_i(s')$ for every $1
     \leq i \leq d$. Thanks to \reff{eq:inv}, we have
     $G \circ G^{-1} \circ G(t)=G(t)$ and thus  $F_i \circ G^{-1} \circ G(t)=F_i(t)$. This gives
     \reff{eq:FiG-1G=Fi}.

     Recall definition \reff{eq:def-Ig} for $I_g(J)$ the set of points
     where  the function $J$ is increasing on their left. Since $G$ is the average of
     the non-decreasing functions $F_i$, we deduce that $
     I_g(G)=\bigcup _{1\leq i\leq d} I_g(F_i)$. Notice that a.s. $X_i$
     belongs to $I_g(F_i)$. Thanks to \reff{eq:inv2}, we get that
     a.s. $\{G(X)\leq x\}=\{X\leq G^{-1}(x)\}$. This gives
     \reff{eq:G_Ginv_all}.
    \end{proof}

    We will also require the following Lemma.

    \begin{lem} \label{lem:FiG-1}
    Let $X=(X_1, \ldots, X_d)$ be a random vector on $\R^d$ with
     one-dimensional margi\-nals cdf $(F_i, 1\leq i\leq d)\in (\cl_1)^d$. Set $G=
     \sum_{i=1}^d F_i/d$. We have for $1 \leq i \leq d$:
     \begin{equation} \label{eq:FiG-1}
       (F_i \circ G^{-1})^{-1} = G \circ F_i^{-1}.
     \end{equation}
    \end{lem}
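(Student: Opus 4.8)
The plan is to prove the functional identity \reff{eq:FiG-1} by unwinding the generalized inverse on the left-hand side directly from its definition and matching it to the right-hand side through a short chain of equivalences. Write $A = F_i \circ G^{-1}$ and $B = G \circ F_i^{-1}$. By definition of the generalized inverse, $A^{-1}(s) = \inf\{t \in \R : A(t) \geq s\}$; here I only use this bare infimum definition and never invoke the c\`adl\`ag equivalences for $A$ itself, so I need not check whether $A = F_i \circ G^{-1}$ is c\`adl\`ag. It therefore suffices to establish, for the relevant values of $s,t$, the equivalence
\[
F_i(G^{-1}(t)) \geq s \iff t \geq G(F_i^{-1}(s)),
\]
since then $A^{-1}(s) = \inf\{t : t \geq B(s)\} = B(s)$, which is exactly \reff{eq:FiG-1}. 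The conventions $\inf \emptyset = +\infty$ and $\inf \R = -\infty$ make this reduction compatible with the cases $B(s) = \pm\infty$.

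First I would treat the outer composition. Since $F_i$ is a bounded non-decreasing c\`adl\`ag function, \reff{eq:inv} applied to $F_i$ with argument $u = G^{-1}(t)$ yields
\[
F_i(G^{-1}(t)) \geq s \iff G^{-1}(t) \geq F_i^{-1}(s).
\]
It then remains to convert $G^{-1}(t) \geq F_i^{-1}(s)$ into $t \geq G(F_i^{-1}(s))$. Setting $v = F_i^{-1}(s)$, the goal is $v \leq G^{-1}(t) \iff G(v) \leq t$, which is precisely the non-strict equivalence \reff{eq:inv2} for $J = G$. That equivalence, however, is only guaranteed when $v$ lies in $I_g(G)$, the set of left-increase points of $G$ from \reff{eq:def-Ig}.

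The key structural input is that $v = F_i^{-1}(s)$ does land in $I_g(G) \cup \{\pm\infty\}$. By \reff{eq:J-1Ig} one has $F_i^{-1}(\R) \subset I_g(F_i) \cup \{\pm\infty\}$, and, exactly as in the proof of Lemma \ref{lem:G_Ginv}, since $G$ is the average of the non-decreasing functions $F_j$, any $u < t$ with $F_i(u) < F_i(t)$ forces $G(u) < G(t)$; hence $I_g(F_i) \subset I_g(G)$. Combining these inclusions gives $v = F_i^{-1}(s) \in I_g(G)$ whenever it is finite, so \reff{eq:inv2} applies and closes the chain.

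The hard part will be precisely this last step: the naive composition rule for generalized inverses delivers only the \emph{strict} equivalence $G^{-1}(t) > v \iff t > G(v)$ unconditionally, whereas the identity needs the \emph{non-strict} version, which can fail at a point where $G$ is flat immediately to its left. The whole point is thus to certify that $F_i^{-1}(s)$ never falls on such a bad point, which is what the inclusion $F_i^{-1}(\R) \setminus \{\pm\infty\} \subset I_g(F_i) \subset I_g(G)$ provides. Finally I would dispose of the boundary cases $F_i^{-1}(s) \in \{\pm\infty\}$ separately, checking them against the conventions $\inf \R = -\infty$, $\inf \emptyset = +\infty$ and the limiting values of $G$; away from these endpoints the equivalence chain above establishes \reff{eq:FiG-1}.
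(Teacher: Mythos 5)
Your proof is correct and follows essentially the same route as the paper's: the same chain of equivalences $F_i\circ G^{-1}(t)\geq s \Leftrightarrow G^{-1}(t)\geq F_i^{-1}(s) \Leftrightarrow t\geq G\circ F_i^{-1}(s)$, justified by the same key inclusion $F_i^{-1}(\R)\subset I_g(F_i)\cup\{\pm\infty\}\subset I_g(G)\cup\{\pm\infty\}$ feeding into \reff{eq:inv2}. The only (harmless, arguably cleaner) difference is that you compute $(F_i\circ G^{-1})^{-1}(s)$ directly as $\inf\{t: t\geq G\circ F_i^{-1}(s)\}$ from the bare definition, whereas the paper first applies the equivalence of \reff{eq:inv} to $F_i\circ G^{-1}$ itself, which requires asserting that this composition is c\`adl\`ag.
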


    \begin{proof}

    Recall Definition \reff{eq:def-Ig} for $I_g(J)$ the set of points
     where  the function $J$ is increasing on their left. Let $1\leq
     i\leq d$.
Thanks to  \reff{eq:J-1Ig}, we have $F^{-1}_{i}(\R) \subset
I_g(F_{i})\cup\{\pm \infty \}$. Since $G$ is the average of
     the non-decreasing functions $F_i$, we deduce that $
     I_g(G)=\bigcup _{1\leq i\leq d} I_g(F_i)$. Thus we get:
\begin{equation}
   \label{eq:inclusion}
F^{-1}_{i}(\R) \subset I_g(G)\cup\{\pm \infty \},
\end{equation}
for all $1 \leq i \leq d$. The function $F_i \circ G^{-1}$ is also bounded, non-decreasing and c\`{a}d-l\`{a}g therefore we have for $t,s, \in \R$:
\[
t \geq (F_{i}\circ G^{-1})^{-1} (s)
\Longleftrightarrow
F_{i}\circ G^{-1}(t) \geq  s
\Longleftrightarrow
G^{-1}(t) \geq F^{-1}_{i}(s)
\Longleftrightarrow
t \geq G\circ F^{-1}_{i}(s),
\]
where we used  the equivalence of
\reff{eq:inv} for the first and second equivalence, \reff{eq:inclusion} and the equivalence of
\reff{eq:inv2} for the last. This gives that  $(F_i \circ G^{-1})^{-1}=G\circ
F^{-1}_{i}$.

    \end{proof}

    In the following Lemma, we show that for $\mathbf{F} \in \cf_d$, all copulas
    in $\cc^{sym}(\mathbf{F})$ share the same multidiagonal denoted by
    $\delta^{\mathbf{F}}$.

    \begin{lem}     \label{lem:delta_SFC}      Let     $\mathbf{F}     =
      (\mathbf{F}_{i}, 1\leq i\leq d)  \in \cf_d$. Let  $C  \in
      \cc^{OS}(\mathbf{F})$ and  $U$ be  a random vector with  cdf
      $S_{\mathbf{F}}(C)$.       Let   $\delta^{\mathbf{F}}=
      (\delta_{(i)},1\leq i\leq  d)$ be the multidiagonal of $S_{\mathbf{F}}(C)$,  that is the
      one-dimensional marginals cdf  of $U^{OS}$, the order statistics of
      $U$. We have that
      $\delta^{\mathbf{F}}$ does not depend on $C$ and for $ 1
      \leq i \leq d$:
\begin{equation}
   \label{eq:d_i+-1}
       \delta_{(i)}= \mathbf{F}_i\circ G^{-1} \quad\text{and}\quad
      \delta_{(i)}^{-1}= G\circ \mathbf{F}_i^{-1},
\end{equation}
    with $G$ given by \reff{eq:def_G}. Furthermore, $C$ is the unique
    copula of $U^{OS}$.
    \end{lem}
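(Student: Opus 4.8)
The plan is to identify the law of the order statistics $U^{OS}$ explicitly, and for this the starting point is to realize $U$ concretely via the probability integral transform. Let $X=(X_1,\ldots,X_d)$ be a random vector with one-dimensional marginals cdf $\mathbf{F}$ and copula $C$, and let $\Pi$ be uniform on $\cs_d$ and independent of $X$; since $C\in\cc^{OS}(\mathbf{F})$, the vector $X$ is a.s.\ ordered. By Definition \ref{defi:SF} the exchangeable vector $X_\Pi$ has one-dimensional marginals $G$ given by \reff{eq:def_G} and copula $S_{\mathbf{F}}(C)$, so by \reff{eq:pro_int_tr} the vector $G(X_\Pi)=(G(X_{\Pi(1)}),\ldots,G(X_{\Pi(d)}))$ has cdf $S_{\mathbf{F}}(C)$; hence $U$ has the same law as $G(X_\Pi)$. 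First I would record the elementary fact that taking order statistics commutes with applying a non-decreasing function coordinatewise, that is $(G(x))^{OS}=G(x^{OS})$ for $x\in\R^d$, and then invoke the first bullet of Lemma \ref{lem:exch_OS} (valid because the cdf of $X$ lies in $\cl_d^{OS}$) to get $(X_\Pi)^{OS}=X$ a.s. Combining these, $U^{OS}$ has the same law as $G((X_\Pi)^{OS})=G(X)$.

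With the law of $U^{OS}$ pinned down as that of $G(X)$, the marginals follow at once. For $1\leq i\leq d$ and $t\in\R$ we have $\delta_{(i)}(t)=\P(U_{(i)}\leq t)=\P(G(X_i)\leq t)$, and the coordinatewise version of the argument proving \reff{eq:G_Ginv_all} in Lemma \ref{lem:G_Ginv} (relying on the a.s.\ membership $X_i\in I_g(\mathbf{F}_i)\subset I_g(G)$ together with \reff{eq:inv2}) gives $\P(G(X_i)\leq t)=\P(X_i\leq G^{-1}(t))=\mathbf{F}_i\circ G^{-1}(t)$. This yields the first identity in \reff{eq:d_i+-1} and, since the right-hand side depends only on $\mathbf{F}$ through $G$ and $\mathbf{F}_i$, shows that $\delta^{\mathbf{F}}$ does not depend on $C$. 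The second identity $\delta_{(i)}^{-1}=G\circ\mathbf{F}_i^{-1}$ is then immediate from Lemma \ref{lem:FiG-1} applied with $F_i=\mathbf{F}_i$, i.e.\ from \reff{eq:FiG-1}.

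It remains to prove that $C$ is the unique copula of $U^{OS}$. Uniqueness is not the issue: the marginals $\delta_{(i)}$ are continuous by Remark \ref{rem:delta_Lipshitz}, so Sklar's theorem applies and $U^{OS}$ has a single copula. The substantive point, which I regard as the main obstacle, is to verify that this copula is exactly $C$ and not some transform of it, since $G$ need not be strictly increasing. To this end I would compute the copula of $U^{OS}$, equivalently of $G(X)$, through \reff{eq:pro_int_tr} as the cdf of the vector $(\delta_{(i)}(G(X_i)),\,1\leq i\leq d)$. Here the key cancellation is $\delta_{(i)}\circ G=\mathbf{F}_i\circ G^{-1}\circ G=\mathbf{F}_i$, which is precisely \reff{eq:FiG-1G=Fi}; consequently $\delta_{(i)}(G(X_i))=\mathbf{F}_i(X_i)$ a.s., so the vector $(\delta_{(i)}(G(X_i)))_{1\leq i\leq d}$ coincides a.s.\ with $(\mathbf{F}_i(X_i))_{1\leq i\leq d}$, whose cdf is by definition the copula $C$ of $X$. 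Hence the unique copula of $U^{OS}$ is $C$, completing the proof. The only care needed throughout is the handling of the generalized inverses off $I_g(G)$, which is why the a.s.\ support statements from the proofs of Lemmas \ref{lem:G_Ginv} and \ref{lem:FiG-1} are invoked rather than naive composition identities.
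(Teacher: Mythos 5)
Your proof is correct and follows essentially the same route as the paper: realize $U$ as $G(X_\Pi)$, use Lemma \ref{lem:exch_OS} to get $U^{OS}\stackrel{d}{=}G(X)$, then apply Lemmas \ref{lem:G_Ginv} and \ref{lem:FiG-1} for \reff{eq:d_i+-1}. Your explicit identification of the copula via $\delta_{(i)}\circ G=\mathbf{F}_i$ and \reff{eq:pro_int_tr} merely spells out a step the paper leaves terse, so no substantive difference.
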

With obvious notation, we might simply write $\delta^{\mathbf{F}}=
\mathbf{F}\circ G^{-1}$, with $G$ given by \reff{eq:def_G}.

    \begin{proof}
      Let $X$  be a random  vector of order statistics  with marginals
      $\mathbf{F} \in \cf_d$ and  copula $C$.  Then $S_{\mathbf{F}}(C)$ is
      the copula of the exchangeable  random vector $X_\Pi$, where $\Pi$
      is uniform  on $\cs_d$  and independent of  $X$.  We  have already
      seen  in \reff{eq:def_G}  that  the  one-dimensional marginals  of
      $X_\Pi$ have  the same distribution  given by $G  \in \cl_1^{1c}$.
      Thanks to  \reff{eq:pro_int_tr}, we deduce that  the random vector
      $U$, with  cdf $S_{\mathbf{F}}(C)$,  has the same  distribution as
      $G(X_{\Pi})$. Since  $G$ is non-decreasing, this  implies that the
      order statistics  of $U$,  $U^{OS}$, has  the same  distribution as
      $G\left((X_\Pi)^{OS}\right)$ that  is as  $G(X)$, thanks  to Lemma
      \ref{lem:exch_OS}. Then use \reff{eq:G_Ginv_all} to get for $x\in \R^d$:
\begin{equation}\label{eq:U^OS}
\P(U^{OS}\leq x)=\P(G(X)\leq x)=\P(X\leq G^{-1}(x)).
\end{equation}
This gives the first part of the Lemma as the multidiagonal of $U$ is
the one-dimensional marginals cdf of its order statistics.
The second equation in \reff{eq:d_i+-1} is due to Lemma \ref{lem:FiG-1}.
The fact that $C$ is the copula of $U^{OS}$ and its uniqueness are due to \reff{eq:U^OS}
and the continuity of $\delta_{(i)}$, see Remark \ref{rem:delta_Lipshitz}.
    \end{proof}

    According  to the  next Proposition,  \reff{eq:def-CFsym} and  Lemma
    \ref{lem:one_to_one}, we  get the main  result of this  Section: for
    any    $\mathbf{F}   \in    \cf_d$,   the    symmetrizing   operator
    $S_{\mathbf{F}}$ is  a bijection between  $\cc^{OS}(\mathbf{F})$ and
    $\cc_{\delta^{\mathbf{F}}} \cap \cc^{sym}$.

     \begin{prop}\label{prop:char_cfsym}
       Let $\mathbf{F} \in \cf_d$. We have $\cc^{sym}(\mathbf{F}) = \cc_{\delta^{\mathbf{F}}} \cap \cc^{sym}$.
     \end{prop}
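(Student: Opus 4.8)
The plan is to prove the two inclusions separately: $\cc^{sym}(\mathbf{F}) \subseteq \cc_{\delta^{\mathbf{F}}} \cap \cc^{sym}$ is immediate from results already established, whereas the reverse inclusion is the substantive part and requires constructing an explicit preimage under $S_{\mathbf{F}}$. For the easy inclusion, recall that $S_{\mathbf{F}}$ takes values in $\cc^{sym}$, so every element of $\cc^{sym}(\mathbf{F}) = S_{\mathbf{F}}(\cc^{OS}(\mathbf{F}))$ is symmetric. Moreover, Lemma \ref{lem:delta_SFC} asserts that for every $C \in \cc^{OS}(\mathbf{F})$ the copula $S_{\mathbf{F}}(C)$ has the fixed multidiagonal $\delta^{\mathbf{F}}$, hence $S_{\mathbf{F}}(C) \in \cc_{\delta^{\mathbf{F}}}$. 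Combining the two gives $\cc^{sym}(\mathbf{F}) \subseteq \cc_{\delta^{\mathbf{F}}} \cap \cc^{sym}$.

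For the reverse inclusion I would fix $\tilde C \in \cc_{\delta^{\mathbf{F}}} \cap \cc^{sym}$ and let $U$ be a random vector with cdf $\tilde C$. Since $\tilde C$ is symmetric, $U$ is exchangeable, and its multidiagonal is $\delta^{\mathbf{F}} = (\delta_{(i)}, 1 \le i \le d)$ with $\delta_{(i)} = \mathbf{F}_i \circ G^{-1}$ by \reff{eq:d_i+-1}. I would set $X = G^{-1}(U^{OS})$. Because $G^{-1}$ is non-decreasing and $U^{OS}$ is ordered, $X$ is a.s. ordered, hence a vector of order statistics. Its one-dimensional marginals are identified using the equivalence $G^{-1}(s) \le t \Leftrightarrow s \le G(t)$ from \reff{eq:inv} together with $\mathbf{F}_i \circ G^{-1} \circ G = \mathbf{F}_i$ from \reff{eq:FiG-1G=Fi}: one computes $\P(X_i \le t) = \P(U_{(i)} \le G(t)) = \delta_{(i)}(G(t)) = \mathbf{F}_i(t)$. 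Thus $X$ has marginals $\mathbf{F}$ and its copula $C$ lies in $\cc^{OS}(\mathbf{F})$.

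It then remains to verify $S_{\mathbf{F}}(C) = \tilde C$, which I would do by computing the law of $G(X_\Pi)$ in two ways, where $\Pi$ is uniform on $\cs_d$ and independent of $U$ (hence of $X$, which is a function of $U$). First, since each $\delta_{(i)}$ is continuous we have $U_{(i)} \in (0,1)$ a.s., so $U_{(i)}$ lies in the range of the continuous function $G$ and \reff{eq:cont_J} yields $G(X) = G \circ G^{-1}(U^{OS}) = U^{OS}$ a.s.; hence $G(X_\Pi) = (U^{OS})_\Pi$, which has the same law as $U$, i.e.\ cdf $\tilde C$, by the symmetric case of Lemma \ref{lem:exch_OS}. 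Second, the one-dimensional marginals of $X_\Pi$ all equal $G$, so by the probability integral transform \reff{eq:pro_int_tr} the cdf of $G(X_\Pi)$ is exactly the copula of $X_\Pi$, which is $S_{\mathbf{F}}(C)$ by Definition \ref{defi:SF}. Equating the two expressions gives $S_{\mathbf{F}}(C) = \tilde C$, so $\tilde C \in \cc^{sym}(\mathbf{F})$, completing the inclusion.

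The main obstacle is the careful bookkeeping with the generalized inverse $G^{-1}$: establishing the marginal identity $\P(X_i \le t) = \mathbf{F}_i(t)$ and, above all, justifying $G \circ G^{-1}(U^{OS}) = U^{OS}$ a.s., which hinges on the continuity of $G$ and of the $\delta_{(i)}$ to ensure each $U_{(i)}$ avoids the negligible set where $G \circ G^{-1}$ fails to be the identity. Once these pointwise inverse identities are secured, the two-way computation of the law of $G(X_\Pi)$ — distributional via Lemma \ref{lem:exch_OS} and structural via \reff{eq:pro_int_tr} — closes the argument, the only remaining care being to read off an equality of copulas rather than merely of distributions, which is guaranteed because both sides are copulas with uniform marginals.
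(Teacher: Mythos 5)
Your proposal is correct and follows essentially the same route as the paper: the forward inclusion via Lemma \ref{lem:delta_SFC}, and the reverse inclusion by pulling $U$ back through $G^{-1}$ and checking that the order statistics $G^{-1}(U^{OS})$ have one-dimensional marginals $\mathbf{F}$ using \reff{eq:inv}, \reff{eq:d_i+-1} and \reff{eq:FiG-1G=Fi}. The only difference is that you spell out the final identification $S_{\mathbf{F}}(C)=\tilde C$ (via the two-way computation of the law of $G(X_\Pi)$), which the paper leaves implicit behind its appeal to Lemma \ref{lem:exch_OS}.
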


     \begin{proof}
       By Lemma \ref{lem:delta_SFC}, we have $\cc^{sym}(\mathbf{F})
       \subset \cc_{\delta^{\mathbf{F}}} \cap \cc^{sym}$.

       Let $C\in \cc_{\delta^{\mathbf{F}}} \cap  \cc^{sym}$ and $U$ be a
       random vector with cdf $C$.  Let $G$ be given by \reff{eq:def_G}.
       Notice that  $X=G^{-1}(U)$ is an exchangeable  random vector with
       marginals $G$ and copula $C$.  Thanks to Lemma \ref{lem:exch_OS},
       the  proof  will  be  complete  as soon  as  we  prove  that  the
       one-dimensional  marginals   cdf  of   $X^{OS}=(X_{(1)},  \ldots,
       X_{(d)})$,  the order  statistics of  $X$, is  given by  $\mathbf
       {F}$.  Notice $X^{OS}=G^{-1}(U^{OS})$,  with  $U^{OS}$ the  order
       statistics of  $U$ whose one-dimensional marginals  cdf are given
       by $\delta^{\mathbf {F}}$.  We have for $1\leq i\leq d$ and $t\in
       \R$:
\[
\P(X_{(i)}\leq t ) = \P(G^{-1}(U_{(i)})\leq  t)
=\P(U_{(i)}\leq G(t))={\mathbf F}_i\circ
G^{-1}\circ G(t)= {\mathbf F}_i(t),
\]
where we used \reff{eq:inv} for the second equality, \reff{eq:d_i+-1}
for the third, and \reff{eq:FiG-1G=Fi} for the last. This finishes the proof.
\end{proof}

We end this Section by an ancillary result we shall use later.

\begin{lem}
   \label{lem:J(F)}
   Let $\mathbf{F} \in \cf_d$. We have
$\cj(\mathbf{F})=\cj(\delta^\mathbf{F})$.
\end{lem}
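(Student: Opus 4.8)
The plan is to prove the identity term by term, using the change of variables induced by $G=\sum_{i=1}^d \mathbf{F}_i/d$ together with the representation $\delta_{(i)}=\mathbf{F}_i\circ G^{-1}$ established in Lemma \ref{lem:delta_SFC}; recall also that $\delta^\mathbf{F}\in\cd\subset\cf_d$ (Remark \ref{rem:delta_Lipshitz}), so that $\cj(\delta^\mathbf{F})$ is defined by \reff{eq:cj_delta}. Fix $2\leq i\leq d$ and let $X_i$ be a random variable with cdf $\mathbf{F}_i$. First I would observe that the image of $\mathbf{F}_i$ (viewed as the law of $X_i$) under the map $G$ is exactly the measure $\delta_{(i)}(ds)$. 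Indeed, arguing as in the proof of Lemma \ref{lem:G_Ginv}, a.s. $X_i\in I_g(\mathbf{F}_i)\subset I_g(G)$, so \reff{eq:inv2} gives a.s. $\{G(X_i)\leq s\}=\{X_i\leq G^{-1}(s)\}$, whence
\[
\P(G(X_i)\leq s)=\mathbf{F}_i(G^{-1}(s))=\delta_{(i)}(s), \quad s\in I.
\]

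Next I would apply the transfer (image-measure) formula to the nonnegative measurable integrand $g(s)=\val{\log(\delta_{(i-1)}(s)-\delta_{(i)}(s))}$, the equality holding in $[0,+\infty]$:
\[
\int_I \delta_{(i)}(ds)\, g(s)=\E\bigl[g(G(X_i))\bigr]=\int_\R \mathbf{F}_i(dt)\, g\circ G(t).
\]
It then remains to identify $g\circ G$ with the expected $\mathbf{F}$-integrand. Substituting $\delta_{(j)}=\mathbf{F}_j\circ G^{-1}$ yields $\delta_{(j)}(G(t))=\mathbf{F}_j\circ G^{-1}\circ G(t)$, and the functional identity \reff{eq:FiG-1G=Fi} (which holds since $G(s)=G(s')$ forces $\mathbf{F}_j(s)=\mathbf{F}_j(s')$) gives $\mathbf{F}_j\circ G^{-1}\circ G=\mathbf{F}_j$ for every $j$. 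Hence $\delta_{(i-1)}(G(t))-\delta_{(i)}(G(t))=\mathbf{F}_{i-1}(t)-\mathbf{F}_i(t)$, so that $g\circ G(t)=\val{\log(\mathbf{F}_{i-1}(t)-\mathbf{F}_i(t))}$. Combining the two displays,
\[
\int_I \delta_{(i)}(ds)\,\val{\log(\delta_{(i-1)}(s)-\delta_{(i)}(s))}=\int_\R \mathbf{F}_i(dt)\,\val{\log(\mathbf{F}_{i-1}(t)-\mathbf{F}_i(t))},
\]
and summing over $2\leq i\leq d$ yields $\cj(\delta^\mathbf{F})=\cj(\mathbf{F})$.

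The computation is essentially routine; the only point that needs care is the handling of the generalized inverse $G^{-1}$, since $G$ need not be strictly increasing, so a naive change of variables could produce spurious boundary or null-set corrections. The identity $\mathbf{F}_j\circ G^{-1}\circ G=\mathbf{F}_j$ from Lemma \ref{lem:G_Ginv} is precisely what neutralizes this: it guarantees that $G^{-1}\circ G$ leaves the value of each $\mathbf{F}_j$ unchanged, so the transfer formula recovers the original integrand exactly. I expect this to be the main (and essentially only) obstacle, and it is resolved entirely by the lemmas already at hand.
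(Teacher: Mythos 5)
Your proof is correct and follows essentially the same route as the paper: a change of variables through $G$ using $\delta_{(i)}=\mathbf{F}_i\circ G^{-1}$ and the identity $\mathbf{F}_j\circ G^{-1}\circ G=\mathbf{F}_j$. Your pushforward formulation via $\P(G(X_i)\leq s)=\delta_{(i)}(s)$ is in fact a slightly cleaner way of justifying the substitution $s=G^{-1}(t)$ that the paper performs more informally (the paper instead restricts the integral to $G^{-1}((0,1))$ and notes $d\mathbf{F}_i$ vanishes off that set), but the underlying argument is the same.
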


\begin{proof}
Let $\mathbf{F} =(\mathbf{F}_i, 1\leq i\leq d)$.    We get,
using \reff{eq:d_i+-1} and the change of variable $s=G^{-1}(t)$ that:
\[
\int_I \delta_{(i)}'(t)
 \val{\log\left(\delta_{(i-1)}(t)-\delta_{(i)}(t) \right)} \, dt
= \int_{G^{-1}((0,1))} \mathbf{F}_i(ds) \,
\val{\log\left(\mathbf{F}_{i-1}(s) - \mathbf{F}_{i}(s) \right)}.
\]
Since $d\mathbf{F}_i=0$ outside $G^{-1}((0,1))$ (as $G$ is increasing as
soon as $\mathbf{F}_{i}$ is increasing), we get the above last
integration is also over $\R$. We deduce that:
\[
\cj(\delta^\mathbf{F})=\sum_{i=2}^d \int_\R \mathbf{F}_i(ds) \,
\val{\log\left(\mathbf{F}_{i-1}(s) - \mathbf{F}_{i}(s) \right)}
=\cj(\mathbf{F}).
\]
\end{proof}

\subsection{Density and entropy of copulas in $\cc^{sym}(\mathbf{F})$} \label{sec:dens+entropy}

We prove in this Section that $S_{\mathbf{F}}$ preserves the absolute
continuity on $\cc^{OS}(\mathbf{F})$ for $\mathbf{F}\in \cf_d$ and the entropy
up to a constant.  Let us introduce some notation. For marginals
$\mathbf{F} \in \cf_d$, let
\begin{equation} \label{eq:def_Psi_i_F}
   \Psi^\mathbf{F}_i = \{ s \in \R, \, \mathbf{F}_{i-1}(s) > \mathbf{F}_{i}(s) \} \quad \text{ for } 2 \leq i \leq d.
\end{equation}
The complementary set $(\Psi^\mathbf{F}_i)^c$ is the collection of the points where $\mathbf{F}_{i-1}=\mathbf{F}_{i}$.
We define $\Sigma^\mathbf{F} \subset I $ as:
\begin{equation} \label{eq:def_sigma_F}
\Sigma^\mathbf{F} = \bigcup_{i=2}^d \mathbf{F}_{i}\left((\Psi^\mathbf{F}_i)^c\right).
\end{equation}
By  Remark \ref{rem:delta_Lipshitz}  we have $\cd  \subset \cf_d$, then
the  definitions \reff{eq:def_Psi_i_F}  and \reff{eq:def_sigma_F}  apply
for  all $\delta  \in  \cd$. In  particular, for  $\delta=(\delta_{(1)},
\hdots, \delta_{(d)}) \in \cd$ the  sets $\Psi^\delta_i$, $2 \leq i \leq
d$ are  open subsets of $I$,  therefore $(\Psi^\delta_i)^c \cap I$  is a
compact subset.  This and  the continuity  of $\delta_{(i)}$  imply that
$\delta_{(i)}((\Psi^\delta_i)^c)=\delta_{(i)}((\Psi^\delta_i)^c \cap I)$
is also compact, hence $\Sigma^\delta$  is compact. Notice that $\{0,1\}
\subset \Sigma^\delta$ always holds.  We define $\cc_\delta^0=\cc_\delta
\cap   \cc^0$  the   subset  of   absolutely  continuous   copulas  with
multidiagonal   $\delta$  and   the  subset   $\cd^0=\{\delta\in  \cd,\,
\cc_\delta^0  \neq   \emptyset\}  $  of  multidiagonals   of  absolutely
continuous     copulas.       According     to     Theorem      2     of
\cite{jaworski2008distributions}, the multidiagonal  $\delta$ belongs to
$\cd^0$  if and  only if  it belongs  to $\cd$ and the Lebesgue  measure of
$\Sigma^\delta$ is zero: $\val{ \Sigma^\delta}=0$.

\begin{lem} \label{lem:delta'_Psi}
 Let $\delta \in \cd$.  We have $\delta\in \cd^0$ if and only if  for all
  $2 \leq i \leq d$, a.e.:
 \begin{equation} \label{eq:delta'=0}
   \delta'_{(i-1)} \ind_{(\Psi_i^\delta)^c} = \delta'_{(i)} \ind_{(\Psi_i^\delta)^c} =0.
 \end{equation}
Furthermore, we have that $\cj(\delta)<+\infty $ implies  $\delta\in
\cd^0$.
\end{lem}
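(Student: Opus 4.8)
The plan is to reduce the membership $\delta \in \cd^0$ to a pointwise condition on the derivatives $\delta'_{(i)}$ via the characterization of $\cd^0$ already recorded before the statement. Recall that by Theorem 2 of \cite{jaworski2008distributions} one has $\delta \in \cd^0$ if and only if $\delta \in \cd$ and $\val{\Sigma^\delta}=0$, and that by \reff{eq:def_sigma_F} the set $\Sigma^\delta=\bigcup_{i=2}^d \delta_{(i)}((\Psi_i^\delta)^c)$ is a finite union. Hence $\delta \in \cd^0$ is equivalent to $\val{\delta_{(i)}\left((\Psi_i^\delta)^c\right)}=0$ for every $2\leq i\leq d$. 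So it suffices to match, for each fixed $i$, the condition $\val{\delta_{(i)}((\Psi_i^\delta)^c)}=0$ with \reff{eq:delta'=0}.

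The analytic heart of the proof is the following equivalence for the non-decreasing Lipschitz (hence absolutely continuous, by Remark \ref{rem:delta_Lipshitz}) function $\delta_{(i)}$ and the set $A_i=(\Psi_i^\delta)^c$:
\[
\val{\delta_{(i)}(A_i)}=0 \quad\Longleftrightarrow\quad \delta'_{(i)}\,\ind_{A_i}=0 \ \text{ a.e.}
\]
I would obtain both directions from the one-dimensional area formula (Banach's indicatrix theorem), which gives $\int_{A_i}\val{\delta'_{(i)}}\,dt=\int_\R \#\{t\in A_i;\ \delta_{(i)}(t)=y\}\,dy$. The implication $\Leftarrow$ is elementary: $\delta'_{(i)}\ind_{A_i}=0$ a.e. gives $\int_{A_i}\delta'_{(i)}=0$, and since $\val{\delta_{(i)}(A_i)}\leq \int_{A_i}\delta'_{(i)}$ we conclude $\val{\delta_{(i)}(A_i)}=0$. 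For the implication $\Rightarrow$, if $\val{\delta_{(i)}(A_i)}=0$ then the indicatrix $\#\{t\in A_i;\ \delta_{(i)}(t)=y\}$ vanishes off the null set $\delta_{(i)}(A_i)$, so the right-hand side of the area formula is an integral over a null set and equals $0$; as $\delta'_{(i)}\geq 0$ this forces $\delta'_{(i)}\ind_{A_i}=0$ a.e. Finally, on $A_i=(\Psi_i^\delta)^c$ the functions $\delta_{(i-1)}$ and $\delta_{(i)}$ coincide, so their derivatives agree a.e. on $A_i$ (a function constant on a measurable set has vanishing derivative a.e. on it, applied to $\delta_{(i-1)}-\delta_{(i)}$); hence $\delta'_{(i-1)}\ind_{A_i}=\delta'_{(i)}\ind_{A_i}$ a.e. and the two equalities in \reff{eq:delta'=0} are equivalent. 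Combining these steps yields the asserted characterization.

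For the last assertion I would argue directly from the definition \reff{eq:cj_delta}. Since $\delta_{(i)}$ is absolutely continuous, $\delta_{(i)}(dt)=\delta'_{(i)}(t)\,dt$, so
\[
\cj(\delta)=\sum_{i=2}^d \int_\R \delta'_{(i)}(t)\,\val{\log\left(\delta_{(i-1)}(t)-\delta_{(i)}(t)\right)}\,dt .
\]
On $(\Psi_i^\delta)^c$ one has $\delta_{(i-1)}-\delta_{(i)}=0$, so the factor $\val{\log(\delta_{(i-1)}-\delta_{(i)})}$ is $+\infty$ there; as the integrand is non-negative and no cancellation can occur, finiteness of $\cj(\delta)$ forces $\delta'_{(i)}=0$ a.e. on $(\Psi_i^\delta)^c$ for every $i$, that is \reff{eq:delta'=0}. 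By the first part this gives $\delta\in\cd^0$.

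The main obstacle is the reverse implication $\val{\delta_{(i)}(A_i)}=0 \Rightarrow \delta'_{(i)}\ind_{A_i}=0$ a.e.: the \emph{easy} bound $\val{\delta_{(i)}(A_i)}\leq\int_{A_i}\val{\delta'_{(i)}}$ only yields the converse direction, and to go back one genuinely needs the area formula to rule out the possibility that $\delta_{(i)}$ has positive derivative on a positive-measure subset of $A_i$ while still collapsing it to a null set.
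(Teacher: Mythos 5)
Your proof is correct and follows essentially the same route as the paper: both reduce $\delta\in\cd^0$ to $\val{\delta_{(i)}((\Psi_i^\delta)^c)}=0$ for each $i$ via Theorem 2 of \cite{jaworski2008distributions}, and both rest on the equivalence $\val{J(A)}=0 \Leftrightarrow J'\ind_A=0$ a.e.\ for a non-decreasing Lipschitz $J$, which the paper obtains as a single change-of-variables identity $\val{J(A)}=\int_0^1 \ind_A J'$ using the generalized inverse, while you derive the two implications separately from the one-dimensional area formula (Banach indicatrix) --- the same underlying fact. Your handling of the $\delta'_{(i-1)}$ term (derivatives of $\delta_{(i-1)}$ and $\delta_{(i)}$ agree a.e.\ on the set where the functions coincide) and of the final implication from $\cj(\delta)<+\infty$ are both sound.
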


\begin{proof}
Let $J$ be a function defined on $I$,  Lipschitz and non-decreasing. Let $A$ be a
Borel subset of $I$. We have:
\[
|J(A)|=\int\ind_{J(A)} (t)\, dt= \int_0^1 \ind_{\{s\in J^{-1}\circ J(A)\}} \,
J'(s)ds= \int_0^1 \ind_{A}(s) \,
J'(s)ds,
\]
where we used \reff{eq:IgJc} and \reff{eq:inv2} for the last
equality. This gives that  $|J(A)|=0$ if and only if
a.e. $J'\ind_A=0$. Then use that $\delta\in \cd^0$ if and only if
$\val{ \delta_{(i)}((\Psi_i^\delta)^c)}=0$ for all $1\leq i\leq d$ and
that ${\delta_{(i-1)}((\Psi_i^\delta)^c)} =
{ \delta_{(i)}((\Psi_i^\delta)^c)}$ to conclude that $\delta\in \cd^0$
if and only if  \reff{eq:delta'=0} holds for all $2\leq i\leq d$.  The
last part of the
Lemma is clear.
\end{proof}

\begin{defi}
   \label{defi:f_d^0}
Let $\cf_d^0 \subset \cf_d$ be the subset of marginals $\mathbf{F}$ such
that there exists an absolutely continuous cdf of order statistics with
marginals $\mathbf{F}$, that is $\cl_d^{OS }(\mathbf{F}) \cap \cl_d^0
\neq \emptyset$.
\end{defi}

In particular, we have $\cd^0 \subset \cf_d^0$.
The next Lemma gives a characterization
of the set $\cf_d^0$.

\begin{lem} \label{lem:F^0} Let $\mathbf{F} \in \cf_d$. Then $\mathbf{F}
  \in \cf_d^0$ if  and only if $\mathbf{F}_i \in \cl_1^0$  for $1 \leq i
  \leq d$  and $\val{\Sigma^\mathbf{F}}=0$.   Furthermore, we  have that
  $\mathbf{F}_i   \in   \cl_1^0$   for   $1   \leq   i   \leq   d$   and
  $\cj(\mathbf{F})<+\infty $ imply $\mathbf{F} \in \cf_d^0$.
\end{lem}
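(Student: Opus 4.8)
The plan is to reduce the statement to the already-available characterization of $\cd^0$ through the multidiagonal $\delta^{\mathbf{F}}=\mathbf{F}\circ G^{-1}$, with $G=\frac1d\sum_{i=1}^d\mathbf{F}_i$ as in \reff{eq:def_G}. The two requirements decouple: $\mathbf{F}_i\in\cl_1^0$ for all $i$ is the marginal part, while $\val{\Sigma^{\mathbf{F}}}=0$ will be shown equivalent to $\delta^{\mathbf{F}}\in\cd^0$. Throughout I shall use the elementary consequence of Sklar's theorem \reff{eq:sklar}--\reff{eq:pro_int_tr}: for $H\in\cl_d^{1c}$ whose one-dimensional marginals lie in $\cl_1^0$, one has $H\in\cl_d^0$ if and only if its copula $C_H\in\cc^0$.

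For the forward implication, assume $\cl_d^{OS}(\mathbf{F})\cap\cl_d^0\neq\emptyset$ and pick $X\sim F$ with $F$ absolutely continuous. Its one-dimensional marginals are then absolutely continuous, so $\mathbf{F}_i\in\cl_1^0$. Taking $\Pi$ uniform on $\cs_d$ and independent of $X$, the vector $X_\Pi$ is a finite mixture of absolutely continuous laws, hence absolutely continuous, with continuous marginals $G$ and copula $S_{\mathbf{F}}(C_F)$. By Proposition \ref{prop:char_cfsym}, $S_{\mathbf{F}}(C_F)\in\cc^{sym}(\mathbf{F})=\cc_{\delta^{\mathbf{F}}}\cap\cc^{sym}$, and it is absolutely continuous by the fact recalled above; thus $\delta^{\mathbf{F}}\in\cd^0$. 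Conversely, assume $\mathbf{F}_i\in\cl_1^0$ for all $i$ and $\delta^{\mathbf{F}}\in\cd^0$. Choose $C'\in\cc_{\delta^{\mathbf{F}}}^0$ and let $\tilde C=(C')^{sym}$ be its symmetrization \reff{eq:Fsym}; then $\tilde C$ is symmetric, absolutely continuous (a finite average of permuted copies of $C'$), and keeps the multidiagonal $\delta^{\mathbf{F}}$ since permuting coordinates does not change the order statistics, so $\tilde C$ belongs to $\cc_{\delta^{\mathbf{F}}}\cap\cc^{sym}$, which equals $\cc^{sym}(\mathbf{F})$ by Proposition \ref{prop:char_cfsym}. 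Hence $\tilde C=S_{\mathbf{F}}(C_0)$ for some $C_0\in\cc^{OS}(\mathbf{F})$. If $X$ is the order statistics with copula $C_0$ and marginals $\mathbf{F}$, then $X_\Pi$ has copula $\tilde C$ and continuous marginals $G$, hence is absolutely continuous, and so is $X=(X_\Pi)^{OS}$ by Lemma \ref{lem:exch_OS}; therefore $\mathbf{F}\in\cf_d^0$. This establishes that $\mathbf{F}\in\cf_d^0$ if and only if $\mathbf{F}_i\in\cl_1^0$ for all $i$ and $\delta^{\mathbf{F}}\in\cd^0$.

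It remains to trade $\delta^{\mathbf{F}}\in\cd^0$ for $\val{\Sigma^{\mathbf{F}}}=0$, which is the main technical point. By Theorem 2 of \cite{jaworski2008distributions}, $\delta^{\mathbf{F}}\in\cd^0$ if and only if $\val{\Sigma^{\delta^{\mathbf{F}}}}=0$, so it suffices to prove $\val{\Sigma^{\mathbf{F}}}=\val{\Sigma^{\delta^{\mathbf{F}}}}$. Combining \reff{eq:d_i+-1} with \reff{eq:FiG-1G=Fi} gives the pointwise identity $\mathbf{F}_i=\delta_{(i)}\circ G$, whence $(\Psi_i^{\mathbf{F}})^c=\{t\in\R;\ G(t)\in(\Psi_i^{\delta^{\mathbf{F}}})^c\}$ and $\mathbf{F}_i\bigl((\Psi_i^{\mathbf{F}})^c\bigr)=\delta_{(i)}\bigl(G(\R)\cap(\Psi_i^{\delta^{\mathbf{F}}})^c\bigr)$. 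Since $G$ is continuous with $G(\R)\supset(0,1)$ and each $\delta_{(i)}$ is Lipschitz (Remark \ref{rem:delta_Lipshitz}), the sets $\mathbf{F}_i((\Psi_i^{\mathbf{F}})^c)$ and $\delta_{(i)}((\Psi_i^{\delta^{\mathbf{F}}})^c\cap I)$ differ only by the $\delta_{(i)}$-image of a subset of $\{0,1\}$, hence by a Lebesgue-null set; taking unions over $i$ yields $\val{\Sigma^{\mathbf{F}}}=\val{\Sigma^{\delta^{\mathbf{F}}}}$. Finally, for the last assertion, if moreover $\cj(\mathbf{F})<+\infty$ then $\cj(\delta^{\mathbf{F}})<+\infty$ by Lemma \ref{lem:J(F)}, so $\delta^{\mathbf{F}}\in\cd^0$ by Lemma \ref{lem:delta'_Psi}; together with $\mathbf{F}_i\in\cl_1^0$ this gives $\mathbf{F}\in\cf_d^0$ through the equivalence just established. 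The only delicate part is this $\Sigma$-identity, where the generalized-inverse bookkeeping around the flat parts of $G$ and the endpoints $\{0,1\}$ must be handled carefully, exactly in the spirit of the proofs of Lemmas \ref{lem:J(F)} and \ref{lem:delta'_Psi}.
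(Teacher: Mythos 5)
Your proof is correct and follows essentially the same route as the paper's: reduce via Sklar's decomposition to the non-emptiness of $\cc^{OS}(\mathbf{F})\cap\cc^0$, transfer this through $S_{\mathbf{F}}$ to $\cc^0_{\delta^{\mathbf{F}}}\cap\cc^{sym}$ and then to $\cc^0_{\delta^{\mathbf{F}}}$, invoke Theorem 2 of \cite{jaworski2008distributions}, identify $\Sigma^{\mathbf{F}}$ with $\Sigma^{\delta^{\mathbf{F}}}$ up to a null set, and conclude the last assertion from Lemmas \ref{lem:J(F)} and \ref{lem:delta'_Psi}. The only (harmless) variations are that you obtain the absolute continuity of $S_{\mathbf{F}}(C)$ by viewing $X_\Pi$ as a finite mixture of absolutely continuous laws rather than from the explicit formula \reff{eq:SFC:expl}, and that you spell out the $\Sigma^{\mathbf{F}}=\Sigma^{\delta^{\mathbf{F}}}$ identification that the paper leaves as an easy verification.
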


\begin{proof}
  Let $F \in  \cl_d^{OS }(\mathbf{F})$. We know that $F  \in \cl_d^0$ if
  and only if $\mathbf{F}_i \in \cl_1^0$ for  $1 \leq i \leq d$ and $C_F
  \in  \cc^0$, the  subset  of absolutely  continuous  copulas (see  for
  example \cite{Jaworski20092863}).  Therefore $\mathbf{F}  \in \cf_d^0$
  if and only  if $\mathbf{F}_i \in \cl_1^0$  for $1 \leq i  \leq d$ and
  $\cc^{OS}(\mathbf{F})   \cap  \cc^0   \neq  \emptyset$.   Recall  that
  $\delta^{\mathbf{F}}$ is  defined by  \reff{eq:d_i+-1}. We  first show
  that
 \begin{equation} \label{eq:lem_F^0_iff}
    \cc^{OS}(\mathbf{F}) \cap \cc^0 \neq \emptyset \text{ if and only if } \cc^0_{\delta^\mathbf{F}} \cap \cc^{sym} \neq \emptyset.
 \end{equation}

 Let   $C    \in   \cc^{OS}(\mathbf{F})   \cap   \cc^0$.    Then   Lemma
 \ref{lem:delta_SFC}     ensures     that     $S_{\mathbf{F}}(C)     \in
 \cc_{\delta^\mathbf{F}}  \cap \cc^{sym}$.  The  absolute continuity  of
 $S_{\mathbf{F}}(C)$  is  a  direct consequence  of  \reff{eq:SFC:expl},
 \reff{eq:d_i+-1} and Remark \ref{rem:delta_Lipshitz} which ensures that
 $\delta_{(i)}^{\mathbf{F}},   1\leq  i   \leq  d$   are  $d$-Lipschitz,
 therefore  their  derivatives exist  a.e.  on  $I$. This  ensures  that
 $\cc^0_{\delta^\mathbf{F}} \cap \cc^{sym} \neq \emptyset$.

 Conversely, let  $C \in \cc^0_{\delta^\mathbf{F}} \cap  \cc^{sym}$. Let
 $U$ be a random vector with cdf $C$. Then its order statistics $U^{OS}$
 is also absolutely continuous. Therefore  the copula of $U^{OS}$, which
 is  $S_{\mathbf{F}}^{-1}(C)$  by  Lemma  \ref{lem:delta_SFC},  is  also
 absolutely   continuous.     This   proves   thanks    to   Proposition
 \ref{prop:char_cfsym}     and    Lemma     \ref{lem:one_to_one}    that
 $S_{\mathbf{F}}^{-1}(C)  \in  \cc^{OS}(\mathbf{F}) \cap  \cc^0$.   This
 gives \reff{eq:lem_F^0_iff}.

 Notice that  $\cc^0_{\delta^\mathbf{F}} \cap \cc^{sym}  \neq \emptyset$
 is equivalent  to $\cc^0_{\delta^\mathbf{F}}\neq \emptyset$,  since for
 any $C \in \cc^0_{\delta^\mathbf{F}}$ we have that $C^{sym}$ defined by
 \reff{eq:Fsym} belongs  to $\cc^0_{\delta^\mathbf{F}}  \cap \cc^{sym}$.
 By       Theorem      2       of      \cite{jaworski2008distributions},
 $\cc^0_{\delta^\mathbf{F}}   \neq    \emptyset$   if   and    only   if
 $\Sigma^{\delta^\mathbf{F}}$ has  zero Lebesgue  measure. The  proof is
 then  complete as  one can  easily verify  using \reff{eq:d_i+-1}  that
 $\Sigma^{\delta^\mathbf{F}}  = \Sigma^\mathbf{F}$  and thanks  to Lemma
 \ref{lem:J(F)}.
 \end{proof}

 From now on we consider $\mathbf{F} \in \cf_d^0$. We give an auxiliary lemma on the support of the copulas in $\cc^{OS}(\mathbf{F}) \cap \cc^0$.

   \begin{lem} \label{lem:c_support}
    Let $\mathbf{F}=(\mathbf{F}_i, 1\leq i\leq d) \in \cf_d^0$ and $C \in
    \cc^{OS}(\mathbf{F}) \cap \cc^0$. Then the density of $C$ vanishes a.e. on
    $I^d\setminus T^{\mathbf{F}}$ with:
\begin{equation} \label{eq:c_zero-T}
T^{\mathbf{F}}=\{ u=(u_1, \ldots, u_d)\in I^d; \, \mathbf{F}_1^{-1}(u_1)\leq \dots\leq
\mathbf{F}_d^{-1} (u_d)\}.
\end{equation}
   \end{lem}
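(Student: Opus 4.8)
The plan is probabilistic: I would realize $C$ as the law of a rank transform of a genuine order statistic and simply push the almost-sure ordering through the marginal quantile functions. Since $C \in \cc^{OS}(\mathbf{F})$, by \reff{eq:def_COSF} there is $F \in \cl^{OS}_d(\mathbf{F})$ with $C_F = C$; let $X = (X_1, \ldots, X_d)$ be a random vector with cdf $F$. Because $F$ is the cdf of an order statistic, a.s. $X_1 \leq \cdots \leq X_d$, and by \reff{eq:pro_int_tr} the vector $U = (\mathbf{F}_1(X_1), \ldots, \mathbf{F}_d(X_d))$ has cdf $C$.

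The crucial step is to check that a.s. $\mathbf{F}_i^{-1}(U_i) = X_i$ for each $1 \leq i \leq d$, i.e. that $\mathbf{F}_i^{-1} \circ \mathbf{F}_i(X_i) = X_i$ almost surely. The law of $X_i$ is $d\mathbf{F}_i$, so \reff{eq:IgJc} gives $\P\big(X_i \in (I_g(\mathbf{F}_i))^c\big) = 0$; on the full-probability event $\{X_i \in I_g(\mathbf{F}_i)\}$ the identity $\mathbf{F}_i^{-1} \circ \mathbf{F}_i(X_i) = X_i$ is exactly the second assertion of \reff{eq:inv2}. Combined with the ordering of the $X_i$, this yields almost surely
\[
\mathbf{F}_1^{-1}(U_1) = X_1 \leq \cdots \leq X_d = \mathbf{F}_d^{-1}(U_d),
\]
that is, $U \in T^{\mathbf{F}}$ with probability one.

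To finish I would invoke the absolute continuity of $C$: denoting by $c \geq 0$ its density, the previous step gives $\int_{I^d \setminus T^{\mathbf{F}}} c(u)\,du = \P\big(U \in I^d \setminus T^{\mathbf{F}}\big) = 0$, and nonnegativity of $c$ forces $c = 0$ a.e. on $I^d \setminus T^{\mathbf{F}}$. The only point requiring care is that $\mathbf{F}_i^{-1} \circ \mathbf{F}_i$ is the identity only on the set $I_g(\mathbf{F}_i)$ where $\mathbf{F}_i$ increases on the left, rather than on all of $\R$; this is precisely why \reff{eq:IgJc}, guaranteeing that the law of $X_i$ does not charge the complement, is the decisive input, and it is what keeps the argument from needing anything beyond the definitions and the properties of the generalized inverse collected in Section \ref{sec:not-J-1}.
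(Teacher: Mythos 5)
Your proposal is correct and follows essentially the same route as the paper: realize $U$ as $(\mathbf{F}_1(X_1),\ldots,\mathbf{F}_d(X_d))$ for an order statistic $X$ via \reff{eq:pro_int_tr}, use that a.s. $X_i\in I_g(\mathbf{F}_i)$ together with \reff{eq:inv2} to get $\mathbf{F}_i^{-1}(U_i)=X_i$, and conclude $\P(U\in T^{\mathbf{F}})=1$. You merely spell out the $I_g(\mathbf{F}_i)$ step that the paper states without detail.
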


    \begin{proof}
     Let $X=(X_1,\hdots,X_d)$ be a random vector of order
     statistics with one-dimensional marginals cdf $\mathbf{F}$ and
     copula $C \in \cc^0$. Let  $U=(U_1,\hdots, U_d) $ be a random vector with cdf
     $C$. Then it is distributed as $(\mathbf{F}_1(X_1),\hdots,
     \mathbf{F}_d(X_d))$,   see
     \reff{eq:pro_int_tr}. We get $\P( U \in T^{\mathbf{F}})=1$, since
     $X$ is a vector of order statistics and $X_i \in I_g(\mathbf{F}_i)$ a.s. for $1 \leq i \leq d$. This gives the result.
    \end{proof}

Now we establish the connection between the sets $\cc^{OS}(\mathbf{F}) \cap \cc^0$ and
$\cc^{sym}(\mathbf{F}) \cap \cc^0$.

   \begin{lem} \label{lem:dens_c_ctilde}
    Let $\mathbf{F} \in \cf_d^0$. The symmetrizing operator $S_{\mathbf{F}}$
    is a bijection from $\cc^{OS}(\mathbf{F}) \cap \cc^0$ onto
    $\cc^{sym}(\mathbf{F}) \cap \cc^0$. Moreover, if $C \in
    \cc^{OS}(\mathbf{F}) \cap \cc^0$, with density function $c$, then the density
    function $s_{\mathbf{F}}(C)$ of $S_{\mathbf{F}}(C)$ is given by, for a.e. $u=(u_1, \ldots, u_d)\in I^d$:
    \begin{equation} \label{eq:c_to_ctilde}
        s_{\mathbf{F}}(C)(u) = \inv{d!}
        c\left(\delta_{(1)}(u_{(1)}) , \hdots,
          \delta_{(d)}(u_{(d)}) \right) \prod_{i=1}^d
        \delta_{(i)}'(u_{(i)}).
    \end{equation}
    Let $T^{\mathbf{F}}$ be given by \reff{eq:c_zero-T}.
    If $C \in \cc^{sym}(\mathbf{F}) \cap \cc_0$ with density ${c}$,
    then the density $s_{\mathbf{F}}^{-1}(C)$ of
    $S_{\mathbf{F}}^{-1}({C})$ is given by, for a.e. $u=(u_1, \ldots, u_d)\in I^d$:
    \begin{equation} \label{eq:c_tilde_to_c}
     s_{\mathbf{F}}^{-1}(C)(u)= d!\,
     \frac{{c}\left(\delta_{(1)}^{-1}(u_1), \hdots,
         \delta_{(d)}^{-1}(u_d) \right)}{\prod_{i=1}^d
       \delta_{(i)}'\circ \delta_{(i)}^{-1}(u_i)
       } \ind_{T^{\mathbf{F}}}(u) \ind_{\left\{\prod_{i=1}^d \delta_{(i)}'\circ \delta_{(i)}^{-1}(u_i)
     > 0\right\}}.
    \end{equation}
   \end{lem}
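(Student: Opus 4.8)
The plan is to read both density formulas off the probabilistic description of $S_{\mathbf{F}}$ through a single coordinatewise change of variables, and to obtain the asserted bijection almost for free from results already proved.

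\emph{Bijection.} By Lemma~\ref{lem:one_to_one} the operator $S_{\mathbf{F}}$ is a bijection from $\cc^{OS}(\mathbf{F})$ onto $\cc^{sym}(\mathbf{F})$, so it suffices to note that $C$ and $S_{\mathbf{F}}(C)$ are simultaneously absolutely continuous. This is exactly what was shown while establishing \reff{eq:lem_F^0_iff} in the proof of Lemma~\ref{lem:F^0}: the direct implication follows from the explicit formula \reff{eq:SFC:expl} together with the $d$-Lipschitz continuity of the $\delta_{(i)}$, and the converse from the fact that the order statistics of an absolutely continuous vector are again absolutely continuous. Restricting $S_{\mathbf{F}}$ accordingly yields the bijection between $\cc^{OS}(\mathbf{F})\cap\cc^0$ and $\cc^{sym}(\mathbf{F})\cap\cc^0$.

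\emph{Forward formula \reff{eq:c_to_ctilde}.} Let $C\in\cc^{OS}(\mathbf{F})\cap\cc^0$ have density $c$ and let $X$ be a vector of order statistics with marginals $\mathbf{F}$ and copula $C$; by \reff{eq:sklar} its density is
\[
f(x)=c(\mathbf{F}_1(x_1),\ldots,\mathbf{F}_d(x_d))\prod_{i=1}^d\mathbf{f}_i(x_i),
\]
supported on the ordered set $S$. Following the proof of Lemma~\ref{lem:delta_SFC}, a vector $U$ with cdf $S_{\mathbf{F}}(C)$ is distributed as $G(X_\Pi)$, with $\Pi$ uniform on $\cs_d$ and independent of $X$, and $G$ as in \reff{eq:def_G}. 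Two steps then give the formula. First, because $f$ vanishes off $S$, for a.e.\ $y$ only the permutation that sorts $y$ contributes to the symmetrization, so the density of $X_\Pi$ equals $\frac1{d!}f(y^{OS})$. Second, I push this forward by the coordinatewise map $G$; using $\delta_{(i)}=\mathbf{F}_i\circ G^{-1}$ from \reff{eq:d_i+-1}, so that $\delta_{(i)}'=(\mathbf{f}_i\circ G^{-1})/(G'\circ G^{-1})$, the marginal factors $\mathbf{f}_i(G^{-1}(u_{(i)}))$ and the Jacobian factors $1/G'(G^{-1}(u_{(i)}))$ combine into $\prod_{i=1}^d\delta_{(i)}'(u_{(i)})$, while $\mathbf{F}_i(G^{-1}(u_{(i)}))=\delta_{(i)}(u_{(i)})$ supplies the arguments of $c$. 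This is \reff{eq:c_to_ctilde}.

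\emph{Inverse formula \reff{eq:c_tilde_to_c}.} Given $C\in\cc^{sym}(\mathbf{F})\cap\cc^0$ with symmetric density $c$, set $\tilde C=S_{\mathbf{F}}^{-1}(C)\in\cc^{OS}(\mathbf{F})\cap\cc^0$ with density $\tilde c$, which by Lemma~\ref{lem:c_support} vanishes a.e.\ off $T^{\mathbf{F}}$. Applying \reff{eq:c_to_ctilde} to $\tilde C$ expresses $c$ through $\tilde c$, and I invert this relation pointwise. For $u\in T^{\mathbf{F}}$, put $w_i=\delta_{(i)}^{-1}(u_i)=G\circ\mathbf{F}_i^{-1}(u_i)$ (using \reff{eq:d_i+-1}); the defining inequalities of $T^{\mathbf{F}}$ and the monotonicity of $G$ force $w_1\le\cdots\le w_d$, so $w$ is already ordered and $w_{(i)}=\delta_{(i)}^{-1}(u_i)$, while continuity of $\delta_{(i)}$ and \reff{eq:cont_J} give $\delta_{(i)}(w_{(i)})=u_i$. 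Substituting $w$ into the forward relation and solving for $\tilde c(u)$ yields the stated expression on $T^{\mathbf{F}}\cap\{\prod_i\delta_{(i)}'\circ\delta_{(i)}^{-1}(u_i)>0\}$. The two indicator sets in \reff{eq:c_tilde_to_c} are then justified: $\tilde c=0$ a.e.\ off $T^{\mathbf{F}}$ by Lemma~\ref{lem:c_support}, and $\tilde c=0$ a.e.\ on $\{\prod_i\delta_{(i)}'\circ\delta_{(i)}^{-1}(u_i)=0\}$ because the explicit right-hand side already integrates to one --- the reverse change of variables reduces its integral, via the symmetry of $c$, to $d!\int_\triangle c=\int_{I^d}c=1$ --- so no mass can be left elsewhere.

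\emph{Main obstacle.} The delicate point throughout is the coordinatewise change of variables with the generalized inverses $G^{-1}$ and $\delta_{(i)}^{-1}$, since $\mathbf{F}_i$, and hence $G$ and the $\delta_{(i)}$, need not be strictly increasing. The remedy is that all the densities involved are concentrated on the sets $I_g(\cdot)$ where a non-decreasing function increases on its left, so that the identities $J^{-1}\circ J=\mathrm{id}$ of \reff{eq:inv2} and $J\circ J^{-1}=\mathrm{id}$ of \reff{eq:cont_J} hold where needed and $G$ is effectively invertible on the support of $f_{X_\Pi}$, in the spirit of Lemma~\ref{lem:G_Ginv}. Keeping track of this support, of the reordering $y\mapsto y^{OS}$, and of the two indicator sets is where the real care lies; the Jacobian algebra itself is routine once $\delta_{(i)}=\mathbf{F}_i\circ G^{-1}$ is in hand.
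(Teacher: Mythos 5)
Your argument is correct, and for the bijection and the forward formula \reff{eq:c_to_ctilde} it is essentially the paper's proof: the paper also quotes Lemma~\ref{lem:one_to_one} together with the absolute-continuity transfer established in the proof of Lemma~\ref{lem:F^0}, and its derivation of \reff{eq:c_to_ctilde} (mixed derivative of \reff{eq:SFC:expl}, with all permutations but the sorting one killed by Lemma~\ref{lem:c_support}) is exactly your density-level pushforward of $\tfrac1{d!}f(y^{OS})$ under $G$, written at the cdf level. Where you genuinely diverge is the inverse formula: the paper computes $S_{\mathbf{F}}^{-1}(C)$ directly as the copula of $U^{OS}$, whose density is $d!\,c\,\ind_{\triangle}$, and differentiates $K(\delta_{(1)}^{-1}(u_1),\ldots,\delta_{(d)}^{-1}(u_d))$, so the indicator $\ind_{\{\prod_i\delta_{(i)}'\circ\delta_{(i)}^{-1}(u_i)>0\}}$ comes out of the differentiation itself; you instead invert the forward relation pointwise along $w=\delta^{-1}(u)$ and then need the extra total-mass argument to rule out mass on $\{\prod_i\delta_{(i)}'\circ\delta_{(i)}^{-1}(u_i)=0\}$. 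Both routes rest on the same two facts --- $\delta_{(i)}^{-1}=G\circ\mathbf{F}_i^{-1}$ from \reff{eq:d_i+-1}, and the identification (up to null sets) of $T^{\mathbf{F}}$ with the set where $\delta^{-1}(u)$ is ordered --- and your normalization argument is valid (a nonnegative function whose integral over a subset equals its total integral vanishes a.e.\ off that subset), though it is slightly longer than the paper's direct computation; conversely, your version makes transparent that \reff{eq:c_tilde_to_c} is nothing but the algebraic inverse of \reff{eq:c_to_ctilde}. Two small points deserve a line each if you write this up: the substitution $w=\delta^{-1}(u)$ into an a.e.\ identity is licensed because the $\delta_{(i)}$ are $d$-Lipschitz, so $\delta_{(i)}^{-1}$ pulls null sets back to null sets; and in the change of variables for the total-mass computation one should invoke $c=0$ a.e.\ on $Z_{\delta^{\mathbf{F}}}$ (Lemma~\ref{lem:0}, applicable since $C\in\cc^0_{\delta^{\mathbf{F}}}$ by Proposition~\ref{prop:char_cfsym}) to drop the indicator and reduce to $d!\int_{\triangle}c=1$.
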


    \begin{proof}
      By Proposition \ref{prop:char_cfsym}, we deduce that $\cc^{sym}(\mathbf{F}) \cap \cc^0 =
      \cc^0_{\delta^\mathbf{F}} \cap \cc^{sym}$. Lemma \ref{lem:one_to_one} and the  proof of Lemma
      \ref{lem:F^0} ensures that $S_{\mathbf{F}}$ is a bijection between $\cc^{OS}(\mathbf{F}) \cap \cc^0$ and
    $\cc^{sym}(\mathbf{F}) \cap \cc^0$.
      The explicit formula
     \reff{eq:c_to_ctilde} can be obtained by taking the mixed
     derivative of the right hand side of \reff{eq:SFC:expl}. By Lemma
     \ref{lem:c_support}, all the terms in the sum disappear except the
     one on the right hand side of \reff{eq:c_to_ctilde}.

To obtain \reff{eq:c_tilde_to_c}, let ${C} \in \cc^{sym}(\mathbf{F}) \cap\, \cc^0$ with density $c$, and  $U$ be a
random vector with cdf  ${C}$. The order statistics $U^{OS}$ derived
from $U$ is also absolutely continuous with cumulative distribution
function ${K}$, and density function ${k}$ given by:
     \begin{equation*}
        {k}(u) = d! \, {c}(u) \ind_{\triangle}(u), \quad u\in I
        ^d.
     \end{equation*}
By Lemma \ref{lem:delta_SFC}, $S_{\mathbf{F}}^{-1}({C})$ is the copula of
$U^{OS}$. From \reff{eq:inversion_formula}, we have for
$u=(u_1, \ldots, u_d)\in I^d$:
     \begin{equation}  \label{eq:kOS}
        S_{\mathbf{F}}^{-1}({C})(u) =
        {K}(\delta_{(1)}^{-1}(u_1), \hdots,
        \delta_{(d)}^{-1}(u_d)).
     \end{equation}
According to  \reff{eq:inv2}, we deduce that
$G^{-1}\circ G \circ \mathbf{F}_i^{-1}=\mathbf{F}_i^{-1}$ on
$(0,1)$. This implies that for $s,t\in (0,1)$, $1\leq i<j\leq  d$:
\begin{align*}
\delta_{(i)}^{-1}(s)\leq  \delta_{(j)}^{-1}(t)
&\Leftrightarrow
G\circ \mathbf{F}_i^{-1}(s) \leq  G\circ \mathbf{F}_j^{-1}(t)\\
&\Rightarrow
G^{-1}\circ G\circ \mathbf{F}_i^{-1}(s) \leq  G^{-1}\circ  G\circ \mathbf{F}_j^{-1}(t)\\
&\Leftrightarrow
\mathbf{F}_i^{-1}(s) \leq   \mathbf{F}_j^{-1}(t)\\
&\Rightarrow
 G\circ \mathbf{F}_i^{-1}(s) \leq    G\circ \mathbf{F}_j^{-1}(t),
\end{align*}
where we used \reff{eq:d_i+-1} for the first equivalence, that $G^{-1}$ is non-decreasing for the first implication
and $G$ is non-decreasing for the second. Thus, we have that for $s,t\in
(0,1)$, that  the two conditions $\delta_{(i)}^{-1}(s)\leq
\delta_{(j)}^{-1}(t)$ and $\mathbf{F}_i^{-1}(s) \leq
\mathbf{F}_j^{-1}(t)$ are equivalent. Thus we deduce that the two sets
\[
\left\{(u_1, \ldots, u_d)\in I^d; \, \delta_{(1)}^{-1}(u_1)\leq  \dots \leq
\delta_{(d)}^{-1}(u_d)  \right\}
\]
and $T^{\mathbf{F}}$ are equal up to a set of zero Lebesgue measure.
Then we deduce \reff{eq:c_tilde_to_c} from  \reff{eq:kOS}.
\end{proof}

  We give a general result on the entropy of an exchangeable  random
  vector and the entropy of  its  order
  statistics.

   \begin{lem} \label{lem:HX_HXPI}
     Let $X$ be a random vector on $\R^d$, $X^{OS}$ the corresponding
     order statistics and $\Pi$ an independent uniform random variable
     on $\cs_d$. Then we have:
     \[
        H((X^{OS})_\Pi) =  \log(d!)+ H(X^{OS}) .
     \]
   \end{lem}

    \begin{proof}
      Let $F$ be  the cdf of $X^{OS}$.  If $F  \notin \cl_d^0$, then the
      cdf  $F^{sym}$ of  $(X^{OS})_\Pi$ given  by
      \reff{eq:Fsym} verifies  also $F^{sym} \notin  \cl_d^0$, therefore
      $H((X^{OS})_\Pi)=H(X^{OS})+ \log(d!)=-\infty$. If $F \in \cl_d^0$ with density
      function $f$, then the density  function $f^{sym}$ of $F^{sym}$ is
      given by, for $x\in \R^d$:
     \[
       f^{sym}(x) = \inv{d!} f(x^{OS}),
     \]
      where $x^{OS}$ is the ordered vector of $x$. Therefore, using that
      $f(x)=0$ if $x\neq x^{OS}$, we have:
      \begin{align*}
        H((X^{OS})_\Pi) & = -\int_{\R^d} f^{sym} \log(f^{sym}) \\
                 & = \log(d!) - \inv{d!} \int_{\R^d} f(x^{OS})
                 \log(f(x^{OS})) \, dx \\
& = \log(d!) -  \int_{\R^d} f(x)
                 \log(f(x) )\, dx \\
                 & = \log(d!) + H(X^{OS}).
      \end{align*}
    \end{proof}

    Now  we  are  ready  to  give  the  connection  between the entropy of
    $C$  and $S_{\mathbf{F}}(C)$  for   $C  \in  \cc^{OS}(\mathbf{F})$,
    which is the main result of this Section.   Recall  the
    definition  of $\delta^{\mathbf{F}}= (\delta^{\mathbf{F}}_{(i)},
    1\leq i\leq d)$ given in Lemma \ref{lem:delta_SFC} and thanks to
    Remark \ref{rem:dlogd}, $H(\delta^\mathbf{F}_{(i)})$ is finite for
    all $1\leq i\leq d$.

   \begin{prop} \label{prop:H(C)_H(SFC)}
      Let $\mathbf{F} \in \cf_d$ and  $C \in \cc^{OS}(\mathbf{F})$.
      Then we have:
   \begin{equation} \label{eq:h(K)}
        H(S_{\mathbf{F}}(C))  =    \log(d!)+ H(C)  + \sum_{i=1}^d H(\delta^{\mathbf{F}}_{(i)}).
   \end{equation}
   \end{prop}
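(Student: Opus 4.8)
The plan is to chain together the structural lemmas of this section, reducing $H(S_{\mathbf{F}}(C))$ to $H(U^{OS})$ and then applying the entropy decomposition \reff{eq:entr_decomp}. Let $U$ be a random vector with cdf $S_{\mathbf{F}}(C)$, so that $H(S_{\mathbf{F}}(C))=H(U)$, and let $\Pi$ be uniform on $\cs_d$ and independent of $U$. Since $S_{\mathbf{F}}(C)\in \cc^{sym}$, the cdf of $U$ is symmetric, so the second part of Lemma \ref{lem:exch_OS} gives that $(U^{OS})_\Pi$ has the same distribution as $U$. Hence $H(U)=H((U^{OS})_\Pi)$, and Lemma \ref{lem:HX_HXPI} applied to $U$ yields $H((U^{OS})_\Pi)=\log(d!)+H(U^{OS})$. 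Combining these, $H(S_{\mathbf{F}}(C))=\log(d!)+H(U^{OS})$, so the whole problem is reduced to computing $H(U^{OS})$.

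To evaluate $H(U^{OS})$, I would appeal to Lemma \ref{lem:delta_SFC}: the one-dimensional marginals cdf of $U^{OS}$ are exactly $\delta^{\mathbf{F}}=(\delta^{\mathbf{F}}_{(i)}, 1\leq i\leq d)$, and the (unique) copula of $U^{OS}$ is $C$ itself. Since the $\delta^{\mathbf{F}}_{(i)}$ are continuous, being $d$-Lipschitz by Remark \ref{rem:delta_Lipshitz}, we have $U^{OS}\in \cl^{1c}_d$, so the decomposition \reff{eq:entr_decomp} applies and gives $H(U^{OS})=\sum_{i=1}^d H(\delta^{\mathbf{F}}_{(i)})+H(C)$. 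Substituting this into the previous paragraph produces the claimed identity \reff{eq:h(K)}.

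I expect the genuine content of the argument to be the identification of the copula of $U^{OS}$ as $C$ itself, rather than merely some copula sharing the multidiagonal $\delta^{\mathbf{F}}$; this is precisely the uniqueness assertion at the end of Lemma \ref{lem:delta_SFC}, which rests on the continuity of the $\delta^{\mathbf{F}}_{(i)}$. Everything else is bookkeeping against the conventions for $H$. In particular, one should check that the degenerate cases are handled consistently: the terms $H(\delta^{\mathbf{F}}_{(i)})$ are all finite by Remark \ref{rem:dlogd}, so both sides of \reff{eq:h(K)} equal $-\infty$ exactly when $H(C)=-\infty$, that is, when $C\notin \cc^0$. By Lemma \ref{lem:dens_c_ctilde} this happens if and only if $S_{\mathbf{F}}(C)\notin \cc^0$, i.e. $H(S_{\mathbf{F}}(C))=-\infty$, so the identity persists in the non absolutely continuous case as well. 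Thus the same chain of equalities covers both cases simultaneously under the $-\infty$ convention of \reff{eq:def-entropy}.
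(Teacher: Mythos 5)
Your proof is correct and follows essentially the same route as the paper's: reduce $H(S_{\mathbf{F}}(C))$ to $H(U^{OS})$ via Lemma \ref{lem:exch_OS} and Lemma \ref{lem:HX_HXPI}, then apply the decomposition \reff{eq:entr_decomp} using the identification of the marginals and copula of $U^{OS}$ from Lemma \ref{lem:delta_SFC}. Your additional remarks on the uniqueness of the copula of $U^{OS}$ and on the consistency of the $-\infty$ convention are sound and merely make explicit what the paper leaves implicit.
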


  \begin{proof}
    Let  $U$  be  an exchangeable  random vector  with  cdf  $S_{\mathbf{F}}(C)$,  and
    $U^{OS}$    its    order     statistics.    According    to    Lemma
    \ref{lem:delta_SFC}, $U^{OS}$  has one-dimensional marginals  cdf
    $\delta^{\mathbf{F}}=(\delta^{\mathbf{F}}_{(i)}, 1\leq i\leq d)$ and
    copula $C$. Therefore, using \reff{eq:entr_decomp}, we get:
     \begin{equation*}
       H(U^{OS}) =  H(C) + \sum_{i=1}^d H(\delta^{\mathbf{F}}_{(i)}).
     \end{equation*}
     On the  other hand,  since $S_{\mathbf{F}}(C)$ is  symmetric, Lemma
     \ref{lem:exch_OS}   ensures  that   $(U^{OS})_\Pi$  has   the  same
     distribution as $U$. Therefore Lemma \ref{lem:HX_HXPI} gives:
      \begin{equation*}
       H(S_{\mathbf{F}}(C)) = H(U) =H\left((U^{OS})_\Pi\right) =  H(U^{OS}) +\log(d!) =  H(C) + \sum_{i=1}^d H(\delta^{\mathbf{F}}_{(i)}) +\log(d!).
      \end{equation*}

  \end{proof}

 \section{Maximum entropy copula with given multidiagonals} \label{sec:max_ent_copula}

 This section is a generalization of \cite{butucea2013maximum}, where the
 maximum entropy copula with given diagonal section (i.e. given distribution
 for the maximum of its marginals) is studied.

 Recall  that multidiagonals  of copulas  on  $\R^ d$  are given by
 Definition  \ref{defi:diag}.  We recall some further
 notation: $\cd$ denotes the set of multidiagonals; for $\delta\in \cd$,
 $\cc_\delta$ denotes the subset of copulas with multidiagonal $\delta$;
 $\cc^0$ denotes the subset copulas  which are absolutely continuous,
 and $\cc^0_\delta = \cc_\delta \cap \cc^0$. The set $\cd^0 \subset \cd$
 contains all diagonals for which $\cc_\delta^0 \neq \emptyset$.

 We  give an explicit formula for  $C^*$   such   that
 $H(C^*)=\max_{C\in  \cc_\delta}  H(C)$,  with   $H$  the  entropy,  see
 definition \reff{eq:def-entropy}. Notice that the  maximum can be taken over
 $\cc_\delta^0$, since the entropy is minus infinity otherwise.
  When $d=2$, the problem was solved in \cite{butucea2013maximum}.

  Let $\delta=(\delta_{(i)}, 1\leq i\leq  d)\in \cd$ be a multidiagonal.
  Since $\delta_{(i)}$,  $1\leq i\leq d$ are  $d$-Lipschitz, the entropy
  of  $H  (\delta_{(i)})$  is  well   defined  and  finite,  see  Remark
  \ref{rem:dlogd} and $\cj(\delta)$ given  by \reff{eq:cj_delta} is also
  well defined and  belongs to $[0,+\infty]$.

 The next two lemmas provides sets on which the density of a copula with
 given multidiagonal is zero. For $\delta\in \cd$, let:
\begin{equation}
   \label{eq:def-Z}
Z_\delta=\{u \in I^d;\,  \text{ there exists $1 \leq i \leq d$ such that } \delta_{(i)}'(u_{(i)})=0\}.
\end{equation}

\begin{lem}
   \label{lem:0}
Let $\delta \in  \cd^0$. Then  for all copulas $C \in \cc^0_\delta$  with density $c$,
  we  have $c\ind_{Z_\delta}=0$ a.e. that is $c(u)\ind_{Z_\delta}(u)=0$
  for a.e. $u\in I^d$.
\end{lem}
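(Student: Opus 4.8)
The plan is to reduce the statement to the elementary fact that a real random variable almost surely avoids the set where its own density vanishes. First I would take $U=(U_1,\hdots,U_d)$ a random vector with cdf $C$ and let $U^{OS}=(U_{(1)},\hdots,U_{(d)})$ be its order statistics. By Definition \ref{defi:diag}, the one-dimensional cdf of $U_{(i)}$ is $\delta_{(i)}$; since $\delta_{(i)}$ is $d$-Lipschitz by Remark \ref{rem:delta_Lipshitz}, it is absolutely continuous on $I$, so $U_{(i)}$ admits the density $\delta_{(i)}'$.

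For $1\leq i\leq d$ set $A_i=\{t\in I;\ \delta_{(i)}'(t)=0\}$. The first key step is the bound
\[
\P(U_{(i)}\in A_i)=\int_{A_i}\delta_{(i)}'(t)\,dt=0,
\]
which holds because $\delta_{(i)}'$ vanishes on $A_i$ by definition. The second step is to rewrite the set $Z_\delta$ from \reff{eq:def-Z} in terms of these events: by definition $U\in Z_\delta$ if and only if $\delta_{(i)}'(U_{(i)})=0$ for some $i$, that is $\{U\in Z_\delta\}=\bigcup_{i=1}^d\{U_{(i)}\in A_i\}$.

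Finally, subadditivity of $\P$ would give
\[
\int_{Z_\delta}c(u)\,du=\P(U\in Z_\delta)\leq\sum_{i=1}^d\P(U_{(i)}\in A_i)=0,
\]
and since $c\geq 0$ this forces $c\,\ind_{Z_\delta}=0$ a.e., as claimed. The argument uses the hypothesis $\delta\in\cd^0$ only to guarantee that $\cc^0_\delta$ is nonempty; this is not really a deep result, and the only point requiring care is the identification of $\delta_{(i)}'$ as the density of the $i$-th order statistic $U_{(i)}$, which is precisely where the $d$-Lipschitz (hence absolute continuity) property of the multidiagonal components enters.
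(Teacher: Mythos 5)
Your proof is correct and is essentially the paper's argument in probabilistic clothing: the paper derives $\int_{I^d} c(u)\ind_K(u_{(i)})\,du=\int_K\delta_{(i)}'(s)\,ds$ for all measurable $K$ via the monotone class theorem and then takes $K=\{\delta_{(i)}'=0\}$, which is exactly your statement that $U_{(i)}$ has density $\delta_{(i)}'$ and hence $\P(U_{(i)}\in A_i)=0$, followed by the same union bound over $i$. No gap; the two write-ups differ only in presentation.
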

 \begin{proof}
By definition of $\delta_{(i)}$, we have for all
$r\in I$:
\[
\int_{I^d} c(u) \ind_{\{u_{(i)} \leq r\}} \, du = \delta_{(i)}(r)=\int_0^r
\delta_{(i)}'(s)\, ds.
\]
This implies, by the monotone class theorem,  that for all
measurable subset $K$ of $I$, we have:
\[
\int_{I^d} c(u) \ind_K(u_{(i)}) \, du = \int_K
\delta_{(i)}'(s)\, ds.
\]
Since $c\geq 0$ a.e., we deduce that
a.e. $c(u)\ind_{\{\delta_{(i)}'(u_{(i)})=0\}}=0$ and thus  a.e. $c\ind_{Z_\delta} = 0 $.
\end{proof}

Recall  the definition of
  $\Psi^\delta_i$ given by \reff{eq:def_Psi_i_F} for  $2 \leq i \leq d$.
  We  also define  $\Psi^\delta_1 =  (0,d_1)$ with  $d_1=\inf\{s \in  I;
  \delta_{(1)}(s)  =1\}$  and  $\Psi^\delta_{d+1}  =  (g_{d+1},1)$  with
  $g_{d+1}=\sup\{s \in I; \delta_{(d)}(s)  =0\}$ . Since $\Psi^\delta_i$
  are open subsets of $I$, there  exists at most countably many disjoint
  intervals $\{ (g_i^{(j)}, d_i^{(j)})$, $j \in J_i \}$ such that
\begin{equation} \label{eq:def_alpha_beta}
   \Psi^\delta_i= \bigcup_{j \in J_i} (g_i^{(j)}, d_i^{(j)}).
\end{equation}
We  note by  $m_i^{(j)}=(g_i^{(j)}+d_i^{(j)})/2$ the  midpoint of  these
intervals    for    $2    \leq    i    \leq    d+1$.    In    particular
$m_{d+1}=(1+g_{d+1})/2$. We also define  $m_1=0$.
For $\delta \in \cd$, let:
\begin{equation} \label{eq:def_LD}
   L_\delta=\{u=(u_1,\hdots, u_d) \in I^d; (u_{(i-1)},u_{(i)}) \subset  \Psi^\delta_i  \text{ for all } 2 \leq i \leq d\}.
\end{equation}
We have the following
Lemma for  all absolutely  continuous copula  $C \in  \cc^0_\delta$ with
density $c$.

\begin{lem} \label{lem:c_zero_Psi} Let $\delta \in  \cd^0$ and $2 \leq i
  \leq d$. Then  for all copulas $C \in \cc^0_\delta$  with density $c$,
  we  have $c\ind_{I\setminus L_\delta}=0$ a.e., that is for a.e.
  $u=(u_1, \hdots,  u_d) \in  I^d$, for  all $s
  \notin \Psi^\delta_i$ :
\begin{equation} \label{eq:c_zero_Psi}
  c(u)\ind_{\{u_{(i-1)} < s < u_{(i)}\}} = 0.
\end{equation}
\end{lem}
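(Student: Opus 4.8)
The plan is to fix the index $i$ from the hypothesis, reduce the statement to a countable family of conditions indexed by a dense set, and derive each of them from the defining property of the multidiagonal. Throughout, let $U=(U_1,\hdots,U_d)$ be a random vector with cdf $C\in\cc^0_\delta$ and density $c$, so that $U^{OS}=(U_{(1)},\hdots,U_{(d)})$ has one-dimensional marginals cdf $\delta_{(i)}$, i.e. $\delta_{(i)}(s)=\P(U_{(i)}\leq s)$ for $s\in I$. First I would record the elementary identity that, for every $s\in I$,
\[
\P(U_{(i-1)}\leq s<U_{(i)})=\P(U_{(i-1)}\leq s)-\P(U_{(i)}\leq s)=\delta_{(i-1)}(s)-\delta_{(i)}(s),
\]
where the first equality uses $\{U_{(i)}\leq s\}\subset\{U_{(i-1)}\leq s\}$ since $U_{(i-1)}\leq U_{(i)}$. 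By the definition \reff{eq:def_Psi_i_F} applied to $\delta$, if $s\notin\Psi^\delta_i$ then $\delta_{(i-1)}(s)=\delta_{(i)}(s)$, so the left-hand side vanishes.

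For a fixed $s\notin\Psi^\delta_i$, since $\{u_{(i-1)}<s<u_{(i)}\}\subset\{u_{(i-1)}\leq s<u_{(i)}\}$, I would deduce
\[
\int_{I^d}c(u)\,\ind_{\{u_{(i-1)}<s<u_{(i)}\}}\,du=\P(U_{(i-1)}<s<U_{(i)})\leq\delta_{(i-1)}(s)-\delta_{(i)}(s)=0,
\]
and, as $c\geq0$ a.e., conclude that $c(u)\,\ind_{\{u_{(i-1)}<s<u_{(i)}\}}=0$ for a.e.\ $u\in I^d$; call $N_s$ the associated Lebesgue-null exceptional set. The hard part is that \reff{eq:c_zero_Psi} requires this for \emph{all} $s\notin\Psi^\delta_i$ simultaneously, an uncountable quantifier that a naive union of the $N_s$ cannot absorb. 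The key observation circumventing this is that $\Psi^\delta_i$ is open (as $\delta_{(i-1)}-\delta_{(i)}$ is continuous), so its complement $A=(\Psi^\delta_i)^c$ is closed, hence separable; I would then fix a countable set $D\subset A$ dense in $A$ for its subspace topology and set $N=\bigcup_{s\in D}N_s$, which remains Lebesgue-null.

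It then remains to verify the pointwise statement off $N$. Take $u\notin N$ and suppose there is some $s_0\notin\Psi^\delta_i$ with $u_{(i-1)}<s_0<u_{(i)}$; then $A\cap(u_{(i-1)},u_{(i)})$ is a nonempty relatively open subset of $A$, so by density it contains a point $s\in D$. Since $u\notin N_s$ while $u_{(i-1)}<s<u_{(i)}$, this forces $c(u)=0$, whence $c(u)\,\ind_{\{u_{(i-1)}<s<u_{(i)}\}}=0$ for every $s\notin\Psi^\delta_i$; this is exactly \reff{eq:c_zero_Psi}. Finally, writing $L_\delta=\bigcap_{i=2}^d\{u\in I^d;\,(u_{(i-1)},u_{(i)})\subset\Psi^\delta_i\}$ from \reff{eq:def_LD}, so that $I^d\setminus L_\delta$ is the union over $2\leq i\leq d$ of the sets just shown to carry no mass, I would conclude $c\,\ind_{I^d\setminus L_\delta}=0$ a.e.\ by summing the $d-1$ null contributions.
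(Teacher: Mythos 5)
Your proof is correct and follows essentially the same route as the paper: both compute $\P(U_{(i-1)}<s<U_{(i)})=\delta_{(i-1)}(s)-\delta_{(i)}(s)=0$ for each fixed $s\notin\Psi^\delta_i$, and then dispose of the uncountable quantifier over $s$ by reducing to a countable family via a density argument. The only cosmetic difference is that the paper uses the countable set of endpoints $\{g_i^{(j)},d_i^{(j)}\}$ of the component intervals of $\Psi^\delta_i$ (whose closure is the complement), whereas you take an abstract countable dense subset of the closed set $(\Psi^\delta_i)^c$; both yield the same conclusion.
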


\begin{proof}
 The complementary set $(\Psi_i^\delta)^c$ is  given by:
\begin{equation} \label{eq:Psi^c}
  (\Psi_i^\delta)^c = \overline{\bigcup_{j \in J_i} \{g_i^{(j)}, d_i^{(j)}\}}.
\end{equation}
Let $U=(U_1, \hdots, U_d)$ be a random vector with cdf $C \in \cc^0_\delta$.
For  $2 \leq i \leq d$ and $s \in \bigcup_{j \in J_i} \{g_i^{(j)}, d_i^{(j)}\}$, that is $\delta_{(i-1)}(s)=\delta_{(i)}(s)$, we have:
 \[
    \P(U_{(i-1)} < s < U_{(i)}) = \P(U_{(i-1)} < s)- \P(U_{(i)} \leq s) = \delta_{(i-1)}(s) - \delta_{(i)}(s) = 0.
 \]
This implies that \reff{eq:c_zero_Psi} holds a.e. for all $s \in \bigcup_{j \in J_i} \{g_i^{(j)}, d_i^{(j)}\}$. Since $J_i$ is at most countable,
we have for a.e. $u \in I^d$ and for all $s \in \bigcup_{j \in J_i} \{g_i^{(j)}, d_i^{(j)}\}$, that
\reff{eq:c_zero_Psi} holds. Since for all $u \in I$, $s \notin \Psi^\delta_i$ there exists $s' \in \bigcup_{j \in J_i} \{g_i^{(j)}, d_i^{(j)}\}$ such that
\[
\ind_{\{u_{(i-1)} < s < u_{(i)}\}}=\ind_{\{u_{(i-1)} < s' < u_{(i)}\}},
\]
we can conclude that for a.e.  $u \in I^d$ and for all $s \notin \Psi^\delta_i$ \reff{eq:c_zero_Psi} hold.

\end{proof}

Notice that for all $u=(u_1,\hdots, u_d) \in I^d$:
\begin{equation}\label{eq:L_delta_leq_prod}
  \ind_{L_\delta}(u) \leq \prod_{i=1}^d \ind_{\Psi^\delta_i \cap \Psi^\delta_{i+1}}(u_{(i)}).
\end{equation}
We define the function
$c_\delta$ on $I^d$ as, for $u=(u_1, \hdots, u_d)\in I^d$:
\begin{equation} \label{eq:c_delta}
  c_\delta(u)=\inv{d!} \ind_{L_\delta}(u)\prod_{i=1}^d a_i(u_{(i)}),
\end{equation}
where the function $a_i$, $1 \leq i \leq d$, are given by, for $t \in I$:
\begin{equation}
\label{eq:ai}
  a_i(t) =  K'_i(t) \expp{K_{i+1}(t)-K_i(t)} \ind_{ \Psi_i^\delta \cap \Psi_{i+1}^\delta}(t),
\end{equation}
  with for $1 \leq i \leq d$, $t \in (g_i^{(j)},d_i^{(j)})$:
 \begin{equation} \label{eq:F}
   K_i(t)=\int_{m_i^{(j)}}^t \frac{\delta_{(i)}'(s)}{\delta_{(i-1)}(s)-\delta_{(i)}(s)} \,ds
 \end{equation}
and the conventions $\delta_{(0)}=1$ and $K_{d+1}=0$. Notice that for
$t\in \Psi^\delta_1$:
\begin{equation}
   \label{eq:K1}
K_1(t)=-\log(1-\delta_{(1)}(t)).
 \end{equation}
\begin{rem}
  The choice  of ${m_i^{(j)}}$ for
  the   integration   lower bound   in    \reff{eq:F}   is    arbitrary
  as  any  other value in
  $(g_i^{(j)},d_i^{(j)})$
  would   not change  the definition of  $c_\delta$ in
  \reff{eq:c_delta}.
\end{rem}

\begin{rem} \label{rem:d^i_t-}
  For all $1 \leq i \leq d$, $j \in J_i$, $t \in (m_i^{(j)},d_i^{(j)})$, we have the following lower bound for $K_i(t)$:
 \begin{equation*}
     K_i(t) \geq \int_{m_i^{(j)}}^t \frac{\delta_{(i)}'(s)}{\delta_{(i-1)}(d_i^{(j)})-\delta_{(i)}(s)} \,ds = \log\left(\frac{\delta_{(i-1)}(d_i^{(j)})-\delta_{(i)}(m_i^{(j)})}{\delta_{(i-1)}(d_i^{(j)})-\delta_{(i)}(t)}\right).
 \end{equation*}
 Since $\delta_{(i)}$ is non-decreasing and $\delta_{(i-1)}(d_i^{(j)}) = \delta_{(i)}(d_i^{(j)})$, we have $\lim_{t \nearrow d_i^{(j)}} K_i(t) = + \infty$.
\end{rem}

The following Proposition states that $c_\delta$ is the density of an
absolutely continuous symmetric copula $C_\delta \in \cc_\delta^0 \cap
\cc^{sym}$. It is more general than the results in  \cite{butucea2013maximum}, where
only the diagonal $\delta_{(d)}$ was supposed given.

\begin{prop} \label{prop:c_delta}
 Let $\delta \in \cd^0$. The function $c_\delta$ defined in \reff{eq:c_delta}-\reff{eq:F}
   is the density of a symmetric copula $C_\delta \in \cc_\delta^0 \cap \cc^{sym}$. In addition, we have:
   \begin{equation} \label{eq:H(c)}
     H(C_\delta) = -\cj(\delta) + \log(d!) + (d-1)+ \sum_{i=1}^d H
     (\delta_{(i)}) .
   \end{equation}
\end{prop}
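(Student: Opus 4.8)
The plan is to check that $c_\delta$ is a bona fide copula density with multidiagonal $\delta$, and then to evaluate its entropy. Two features are immediate from the definition \reff{eq:c_delta}--\reff{eq:F}: $c_\delta \ge 0$, since $K_i' = \delta_{(i)}'/(\delta_{(i-1)}-\delta_{(i)}) \ge 0$ on $\Psi_i^\delta$ and the exponential is positive; and $c_\delta$ is symmetric, as it depends on $u$ only through the ordered vector $u^{OS}$ (both $\ind_{L_\delta}$ and $\prod_i a_i(u_{(i)})$ are symmetric). Hence, once I show $\int_{I^d} c_\delta = 1$ and that the one-dimensional marginals are uniform, $c_\delta$ will be the density of a symmetric copula $C_\delta$, and the marginal computation below will identify its multidiagonal. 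Throughout I will use that, for $\delta \in \cd^0$, Lemma \ref{lem:delta'_Psi} gives $\delta_{(i)}' \ind_{(\Psi_i^\delta)^c} = 0$ and $\delta_{(i)}'\ind_{(\Psi_{i+1}^\delta)^c}=0$ a.e., so that $\delta_{(i)}'$ (hence $K_i'$) is supported on $\Psi_i^\delta \cap \Psi_{i+1}^\delta$.

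The computational core is the law of $U^{OS}$, where $U$ has density $c_\delta$. By exchangeability $U^{OS}$ has density $k(u) = d!\, c_\delta(u) \ind_\triangle(u) = \ind_{L_\delta \cap \triangle}(u) \prod_{i=1}^d a_i(u_i)$. First I would integrate out the top variables $u_{i+1}, \dots, u_d$ by a downward induction proving $\int \prod_{k>i} a_k = \expp{-K_{i+1}(u_i)}$: the inductive step rests on $a_j \expp{-K_{j+1}} = K_j' \expp{-K_j} = -\frac{d}{dt}\expp{-K_j}$, integrated over the component of $\Psi_j^\delta$ containing $u_{j-1}$ up to its right endpoint, where $\expp{-K_j} \to 0$ by Remark \ref{rem:d^i_t-}; the indicator $\ind_{\Psi_{j+1}^\delta}$ in $a_j$ is harmless because $K_j'$ already vanishes off $\Psi_{j+1}^\delta$. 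A symmetric upward induction gives that integrating out $u_1, \dots, u_{i-1}$ yields $\phi_i(u_i) = (\delta_{(i-1)}(u_i) - \delta_{(i)}(u_i))\expp{K_i(u_i)}$, the step reducing to the antiderivative identity $\frac{d}{ds}[(\delta_{(i-1)}-\delta_{(i)})\expp{K_i}] = \delta_{(i-1)}'\expp{K_i}$ and to the vanishing of the lower boundary term. Multiplying the two contributions by the surviving factor $a_i(u_i)$ and simplifying, the marginal density of $U_{(i)}$ is
\[
   \phi_i(u_i)\, a_i(u_i)\, \expp{-K_{i+1}(u_i)} = (\delta_{(i-1)}(u_i)-\delta_{(i)}(u_i))\, K_i'(u_i) = \delta_{(i)}'(u_i),
\]
so $U_{(i)}$ has cdf $\delta_{(i)}$ (the case $i=d$ also gives $\int k = 1$). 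Thus $\delta_{C_\delta} = \delta$, and since $U$ is exchangeable its common one-dimensional marginal is $\frac1d \sum_i \delta_{(i)}$, equal to the identity by \reff{eq:sum_delta}; hence $C_\delta \in \cc_\delta^0 \cap \cc^{sym}$.

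For the entropy I would first reduce to $U^{OS}$: as $U$ is exchangeable, Lemma \ref{lem:exch_OS} gives $U \stackrel{d}{=} (U^{OS})_\Pi$ and Lemma \ref{lem:HX_HXPI} yields $H(C_\delta) = \log(d!) + H(U^{OS})$. On the support, $\log k(u) = \sum_i \log a_i(u_i) = \sum_i [\log K_i'(u_i) + K_{i+1}(u_i) - K_i(u_i)]$, so using that $U_{(i)}$ has density $\delta_{(i)}'$,
\[
   H(U^{OS}) = -\sum_{i=1}^d \int_I \delta_{(i)}'(t)\bigl[\log K_i'(t) + K_{i+1}(t) - K_i(t)\bigr]\, dt.
\]
The $-\int \delta_{(i)}' \log \delta_{(i)}'$ part of $\log K_i' = \log \delta_{(i)}' - \log(\delta_{(i-1)}-\delta_{(i)})$ produces $\sum_i H(\delta_{(i)})$, while the $\sum_{i\ge2}\int \delta_{(i)}'\log(\delta_{(i-1)}-\delta_{(i)})$ part produces $-\cj(\delta)$ (the integrand being non-positive) plus the $i=1$ term $\int \delta_{(1)}'\log(1-\delta_{(1)})$. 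The remaining $K$-terms regroup as $\int \delta_{(1)}' K_1 + \sum_{i=2}^d \int (\delta_{(i)}'-\delta_{(i-1)}')K_i$; integrating each of the latter by parts over the components of $\Psi_i^\delta$ leaves only $\int (\delta_{(i-1)}-\delta_{(i)})K_i' = \int \delta_{(i)}'$, which sums to $\delta_{(i)}(1)-\delta_{(i)}(0)=1$ by Lemma \ref{lem:delta'_Psi}, contributing $(d-1)$ in total; and $\int \delta_{(1)}'K_1 = -\int\delta_{(1)}'\log(1-\delta_{(1)})$ by \reff{eq:K1} cancels the leftover $i=1$ term. Collecting everything gives $H(U^{OS}) = \sum_i H(\delta_{(i)}) - \cj(\delta) + (d-1)$, hence \reff{eq:H(c)}.

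The main obstacle is the boundary bookkeeping. In the telescoping integrations one must know that $\expp{-K_i}\to 0$ at the right endpoints of the components of $\Psi_i^\delta$ (Remark \ref{rem:d^i_t-}) and, in the upward induction and the integration by parts, that the products $(\delta_{(i-1)}-\delta_{(i)})\expp{\pm K_i}$ and $(\delta_{(i-1)}-\delta_{(i)})K_i$ tend to $0$ at both endpoints. This is where the structure is genuinely used: the $d$-Lipschitz bound on the $\delta_{(i)}$ controls the rate at which $\delta_{(i-1)}-\delta_{(i)}$ vanishes, against the logarithmic blow-up of $K_i$, and Lemma \ref{lem:delta'_Psi} (valid because $\delta \in \cd^0$) guarantees both that $\delta_{(i)}$ increases only on $\Psi_i^\delta \cap \Psi_{i+1}^\delta$ and that the support indicators in the $a_i$ do not truncate the telescoping integrals. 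When $\cj(\delta)=+\infty$ the same computation shows $H(C_\delta)=-\infty$, in agreement with \reff{eq:H(c)}.
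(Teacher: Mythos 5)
Your proposal follows essentially the same route as the paper: your downward and upward telescoping integrations are exactly the paper's functions $B_i$ and $E_i$ (Lemmas \ref{lem:Bi} and \ref{lem:Ei}), your identity $a_i E_{i-1} B_{i+1}=\delta_{(i)}'$ is the paper's computation of $\ca_{d+i}(c_\delta)$ via Lemma \ref{lem:B_iE_i} and Lemma \ref{lem:unif}, and your entropy decomposition into the $\log\delta_{(i)}'$, $\log(\delta_{(i-1)}-\delta_{(i)})$ and $K_i$ pieces matches the paper's $J_1$, $J_2$, $J_3$ (with $J_4=\log(d!)$ absorbed into your use of Lemma \ref{lem:HX_HXPI}). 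The one place where you assert more than you establish is the vanishing of the boundary terms $(\delta_{(i-1)}-\delta_{(i)})K_i$ in the integration by parts; the paper sidesteps precisely this by working on the $\varepsilon$-truncated domain $\triangle^\varepsilon$, where the truncated $B_i^{\varepsilon}$ and $E_{i-1}^{\varepsilon}$ vanish identically at the relevant endpoints so that no boundary limit is ever needed, and then letting $\varepsilon\downarrow 0$ by monotone convergence.
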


The proof  of this Proposition is given in Section \ref{sec:appendixA}.
The following characterization of $C_\delta$ is proved in Section
\ref{sec:proof-prod}.

\begin{prop} \label{prop:c_delta_unique_product}
Let $\delta \in \cd^0$. Then
$C_\delta$  is the only copula in $\cc_\delta^0$ whose density is of the
form $(1/d!)\ind_{L_\delta}(u)\prod_{i=1}^d h_i(u_{(i)})$, where $h_i$, $1 \leq i \leq d$ are
measurable non-negative functions defined on $I$.
\end{prop}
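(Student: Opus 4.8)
The plan is to show that any density of the prescribed product form must coincide a.e.\ with $c_\delta$, by combining the elementary inequality $(x-y)(\log x-\log y)\ge 0$ with the fact that the two densities carry the same multidiagonal $\delta$. So I let $C\in\cc_\delta^0$ have density $c(u)=(1/d!)\ind_{L_\delta}(u)\prod_{i=1}^d h_i(u_{(i)})$ with $h_i\ge 0$ measurable, and recall from \reff{eq:c_delta}--\reff{eq:ai} that $c_\delta$ has exactly this shape with $h_i=a_i$. Write $S^\ast=L_\delta\setminus Z_\delta$, with $Z_\delta$ given by \reff{eq:def-Z}. The goal is $c=c_\delta$ a.e., which gives $C=C_\delta$.

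First I would check that $c$ and $c_\delta$ have the same support $S^\ast$ up to a null set. By Lemma \ref{lem:0} both densities vanish a.e.\ on $Z_\delta$, and by Lemma \ref{lem:c_zero_Psi} both vanish a.e.\ off $L_\delta$, so both are supported in $S^\ast$. On $S^\ast$ one has $\ind_{L_\delta}=1$ and, by \reff{eq:L_delta_leq_prod}, $u_{(i)}\in\Psi^\delta_i\cap\Psi^\delta_{i+1}$ with $\delta_{(i)}'(u_{(i)})>0$; hence $a_i(u_{(i)})=K_i'(u_{(i)})\,\expp{K_{i+1}(u_{(i)})-K_i(u_{(i)})}>0$ and $c_\delta>0$ there. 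To get $c>0$ a.e.\ on $S^\ast$ it suffices to show $h_i>0$ a.e.\ on $\Psi^\delta_i\cap\Psi^\delta_{i+1}\cap\{\delta_{(i)}'>0\}$. This follows from the multidiagonal constraint: fixing the $i$-th coordinate at $t$ and integrating the remaining ones over the (purely geometric) slice of $L_\delta\cap\triangle$ factorises the marginal density of the $i$-th order statistic as $h_i(t)\,M_i(t)$ with $M_i(t)\ge 0$ finite a.e.; since this marginal equals $\delta_{(i)}'(t)$, a vanishing of $h_i$ on a positive-measure subset of $\{\delta_{(i)}'>0\}$ would force the marginal to vanish there, contradicting $\delta_{(i)}'>0$.

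On $S^\ast$ both densities are positive, so $\log(c/c_\delta)=\sum_{i=1}^d g_i(u_{(i)})$ with $g_i=\log(h_i/a_i)$ finite. I then consider $I=\int_{I^d}(c-c_\delta)\log(c/c_\delta)\,du$. Its integrand is nonnegative by $(x-y)(\log x-\log y)\ge 0$, so $I\in[0,+\infty]$ is well defined. On the other hand, writing $I=\sum_i\int g_i(u_{(i)})(c-c_\delta)\,du$ and using that $g_i$ depends on $u$ only through $u_{(i)}$, each term reduces to $\int_I g_i(t)(\delta_{(i)}'(t)-\delta_{(i)}'(t))\,dt=0$, because $C$ and $C_\delta$ share the multidiagonal $\delta$, hence the same marginal density $\delta_{(i)}'$ for the $i$-th order statistic. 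Thus $I=0$, and nonnegativity of the integrand forces $(c-c_\delta)\log(c/c_\delta)=0$ a.e., i.e.\ $c=c_\delta$ a.e. Since both vanish off $S^\ast$, this yields the uniqueness.

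The delicate point, on which I would spend most care, is the interchange in the evaluation of $I$ together with the accompanying integrability, especially when $\cj(\delta)=+\infty$ so that $C_\delta$ has infinite entropy by \reff{eq:H(c)}. The splitting of $I$ into the $d$ marginal integrals is immediate once each $g_i$ is integrable against $\delta_{(i)}'$; in general I would justify it by truncating $g_i$ and passing to the limit, using the one-sided bound $\int c\,\log^-(c/c_\delta)\le 1$ (and its analogue with $c$ and $c_\delta$ exchanged) that comes from $\log x\le x-1$, so that only the nonnegative parts require control and monotone convergence applies. The other place where the hypothesis is truly used is the a.e.\ positivity of the $h_i$ on $S^\ast$, which again rests on the multidiagonal constraint; the remaining support bookkeeping is already prepared in Lemmas \ref{lem:0} and \ref{lem:c_zero_Psi}.
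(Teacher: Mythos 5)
Your strategy is genuinely different from the paper's: you run the classical Gibbs/Kullback--Leibler symmetrization $I=\int (c-c_\delta)\log(c/c_\delta)\ge 0$ and try to kill $I$ using the fact that $\log(c/c_\delta)=\sum_i g_i(u_{(i)})$ with both densities sharing the order-statistic marginals $\delta_{(i)}'$. The preparatory steps are sound: the common support $L_\delta\setminus Z_\delta$ is correctly identified via Lemmas \ref{lem:0} and \ref{lem:c_zero_Psi}, and your factorization $\delta_{(i)}'(t)=h_i(t)M_i(t)$ does force $h_i>0$ a.e.\ on $\{\delta_{(i)}'>0\}$, hence $c>0$ a.e.\ where $c_\delta>0$.

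The gap is exactly at the point you flag but do not resolve: the passage from $\int(c-c_\delta)\sum_i g_i^{(n)}(u_{(i)})\,du=0$ (true for bounded truncations $g_i^{(n)}$, by \reff{eq:int-c-h} applied to both copulas) to $I=0$. Truncating each $g_i$ \emph{separately} destroys the sign coherence between $c-c_\delta$ and the truncated sum --- e.g.\ $\sum_i g_i(u_{(i)})>0$ does not imply $\sum_i g_i^{(n)}(u_{(i)})\ge 0$ --- so the truncated integrands are neither monotone in $n$ nor bounded below by an integrable function, and neither monotone convergence nor Fatou applies. The one-sided bounds $\int c\,\log^-(c/c_\delta)\le 1$ and $\int c_\delta\,\log^+(c/c_\delta)\le 1$ control only two of the four pieces of $I$; they leave $\int c\,\log^+(c/c_\delta)+\int c_\delta\,\log^-(c/c_\delta)$ uncontrolled, and this is precisely the quantity that can be $+\infty$. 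This is not a cosmetic issue: the proposition is asserted for every $\delta\in\cd^0$, including $\cj(\delta)=+\infty$ where $H(C_\delta)=-\infty$ by \reff{eq:H(c)}, so one cannot fall back on $D(c\Vert c_\delta)=-H(c)+H(c_\delta)$ either; the relative entropies are a priori of the form $\infty-\infty$. Making this route rigorous amounts to proving uniqueness for a generalized Schr\"odinger system on the non-product support $L_\delta$, which is a known delicate point (cf.\ \cite{Ruschendorf1993369}) and requires an argument, not just a truncation. The paper sidesteps all of this: it exploits the ordering structure of $L_\delta\cap\triangle$ to show by downward induction, using only the constraints $\ca^\mu_i(c^*)=\delta_{(i)}$ and the recursions for $B_i^*,E_i^*$, that $a_i^*$ equals $a_i$ up to multiplicative constants on each component interval, and concludes by normalization --- no integrability of logarithms is ever needed. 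To complete your proof you would need either to establish the missing integrability $\int c\,|g_i(u_{(i)})|\,du<\infty$ (which is not available in general) or to replace the truncation step by the kind of recursive marginal computation the paper uses.
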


The following Theorem states that
the unique optimal solution of $\max_{C\in \cc_\delta}(H(C))$, if it
exists,  is  given  by  $C_\delta$.   Its  proof  is  given  in  Sections
 \ref{sec:case-a} for case (a) and \ref{sec:proof-spec} for case (b).

\begin{theo} \label{theo:spec}
 Let $\delta \in \cd$.
 \begin{itemize}
   \item[(a)] If $ \cj(\delta)= +\infty$ then
     $\max_{C \in \cc_\delta} H(C)=-\infty $.
   \item[(b)] If $\cj(\delta) < +\infty$ then $\delta \in \cd^0$,
$ \max_{C \in \cc_\delta} H(C) > -\infty $ and  $C_\delta$ given in Proposition \ref{prop:c_delta} is the unique copula such that
$H\left(C_\delta\right)=\max_{C \in \cc_\delta} H(C)$.
\end{itemize}
\end{theo}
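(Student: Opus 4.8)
The plan is to read Theorem \ref{theo:spec} as a maximum-entropy problem under the linear (moment-type) constraints defining $\cc_\delta$, whose optimizer, by the infinite-dimensional duality of \cite{borwein1994entropy}, must take the Gibbs exponential form; since Proposition \ref{prop:c_delta} already exhibits an explicit candidate $C_\delta\in\cc_\delta^0$ of exactly this form, I would not redo the full duality argument but instead verify optimality of $c_\delta$ by a single Gibbs / relative-entropy inequality. First I would dispose of the non-absolutely-continuous part: if $\cc_\delta\cap\cc^0=\emptyset$ then every $C\in\cc_\delta$ fails to have a density, so $H(C)=-\infty$ by \reff{eq:def-entropy} and the maximum is $-\infty$; this covers the portion of (a) with $\delta\notin\cd^0$. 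Since the entropy is $-\infty$ off $\cc_\delta^0$, it remains to maximize over $\cc_\delta^0$, and I may assume $\delta\in\cd^0$, which in case (b) is automatic by the last part of Lemma \ref{lem:delta'_Psi}.

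The core is one inequality valid for every $C\in\cc_\delta^0$ with density $c$. By Lemmas \ref{lem:0} and \ref{lem:c_zero_Psi}, $c$ vanishes a.e.\ outside $L_\delta\setminus Z_\delta$, and on that set $c_\delta>0$ with $\log c_\delta(u)=-\log(d!)+\sum_{i=1}^d\log a_i(u_{(i)})$ by \reff{eq:c_delta}--\reff{eq:ai}; in particular $\{c>0\}\subset\{c_\delta>0\}$ up to a null set, so the relative entropy is well defined and non-negative, giving
\[
0\le\int_{I^d}c\,\log\frac{c}{c_\delta}\,du,\quad\text{hence}\quad H(C)=-\int c\log c\le-\int c\log c_\delta.
\]
The decisive point is that the right-hand side does not depend on $C$: since any $C\in\cc_\delta$ has $U_{(i)}$-marginal $\delta_{(i)}$ (with density $\delta_{(i)}'$), one has $\int_{I^d}c(u)\log a_i(u_{(i)})\,du=\int_I\delta_{(i)}'(t)\log a_i(t)\,dt$ for each $i$, whence $-\int c\log c_\delta=\log(d!)-\sum_i\int_I\delta_{(i)}'\log a_i\,dt=-\int c_\delta\log c_\delta=H(C_\delta)$, the last value being the one computed in \reff{eq:H(c)}. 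Thus $H(C)\le H(C_\delta)$ for all $C\in\cc_\delta$.

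The two cases then follow by reading off the sign of $H(C_\delta)$ in \reff{eq:H(c)}. In case (a), $\cj(\delta)=+\infty$ while each $H(\delta_{(i)})$ is finite by Remark \ref{rem:dlogd}, so $H(C_\delta)=-\infty$ and therefore $H(C)=-\infty$ for every feasible $C$, i.e.\ $\max_{C\in\cc_\delta}H(C)=-\infty$. In case (b), $\cj(\delta)<+\infty$ makes $H(C_\delta)$ finite; since $C_\delta\in\cc_\delta^0$ is itself feasible we obtain $\max_{C\in\cc_\delta}H(C)=H(C_\delta)>-\infty$, attained at $C_\delta$, and uniqueness comes from the equality case of the Gibbs inequality, namely $\int c\log(c/c_\delta)=0$ forces $c=c_\delta$ a.e. (this is consistent with, and stronger than, the product-form characterization of Proposition \ref{prop:c_delta_unique_product}).

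The main obstacle is the bookkeeping of infinities in the identity $-\int c\log c_\delta=H(C_\delta)$, especially in case (a), where one must check that $\int c\log c_\delta$ is unambiguously $+\infty$ rather than an indeterminate $\infty-\infty$. Using \reff{eq:ai}--\reff{eq:F} to write $\log a_i=\log\delta_{(i)}'-\log(\delta_{(i-1)}-\delta_{(i)})+K_{i+1}-K_i$, the term $-\int\delta_{(i)}'\log(\delta_{(i-1)}-\delta_{(i)})$ is non-negative and for $2\le i\le d$ sums to $\cj(\delta)$, while $\int\delta_{(i)}'\log\delta_{(i)}'=-H(\delta_{(i)})$ is finite, the $i=1$ contribution $\int\delta_{(1)}'\bigl(-\log(1-\delta_{(1)})\bigr)$ is finite, and the telescoping sum $\sum_i\int\delta_{(i)}'(K_{i+1}-K_i)$ is finite. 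This last finiteness is exactly what is established while proving Proposition \ref{prop:c_delta}, and I would invoke it here so that only the $\cj$-term can diverge, making the sign of $-\int c\log c_\delta$ — and hence the dichotomy between (a) and (b) — unambiguous.
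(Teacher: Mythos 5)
Your route is genuinely different from the paper's: instead of invoking the Borwein--Lewis--Nussbaum duality theorem to produce an optimizer of product form and then identifying it via Proposition \ref{prop:c_delta_unique_product} (case (b)), and instead of reducing case (a) to the bivariate result of \cite{butucea2013maximum} via the marginalization inequality $H((X_{i-1},X_i))\ge H(X)$, you verify optimality of the explicit candidate $c_\delta$ by a single Gibbs inequality. The support inclusion you need ($c=0$ a.e.\ on $Z_\delta\cup L_\delta^c$ while $c_\delta>0$ there) is indeed supplied by Lemmas \ref{lem:0} and \ref{lem:c_zero_Psi}, and the dichotomy (a)/(b) then reads off the sign of $-\int c\log c_\delta$. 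If it works, this is more elementary and unifies the two cases.

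There is, however, one genuine gap, precisely at the step you flag: the claim that $-\int c\log c_\delta$ is $C$-independent because it equals $\log(d!)-\sum_i\int_I\delta_{(i)}'\log a_i$. This reduction to one-dimensional marginal integrals is only legitimate if each $\int_I\delta_{(i)}'\,(\log a_i)^{\pm}$ is controlled, and the $K_i$ contributions can fail this even when $\cj(\delta)<+\infty$. Already for $d=2$: take $\delta_{(2)}(t)=t-(d_0-t)^2$ near a contact point $d_0$ with $\delta_{(2)}(d_0)=d_0$; then $\cj(\delta)\sim\int\val{\log((d_0-t)^2)}\,dt<\infty$ but $K_2(t)\sim \tfrac{1}{2}(d_0-t)^{-1}$, so $\int\delta_{(2)}'K_2^+=+\infty$ and likewise $\int\delta_{(1)}'K_2^+=+\infty$; the ``telescoping sum'' you write down is then an indeterminate $\infty-\infty$ of marginal integrals, and the finiteness of $J_3$ in the proof of Proposition \ref{prop:c_delta} (which is a truncated, integrated-by-parts computation for $c_\delta$ only) cannot be cited to resolve it for a general $c$. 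The fix is to keep these terms paired rather than marginal: on the common support, $K_i(u_{(i)})-K_i(u_{(i-1)})=\int_I K_i'(s)\ind_{\{u_{(i-1)}<s<u_{(i)}\}}\,ds\ge0$, and Fubini together with $\P(U_{(i-1)}<s<U_{(i)})=\delta_{(i-1)}(s)-\delta_{(i)}(s)$ and \reff{eq:F} gives
\begin{equation*}
\int_{I^d}c(u)\bigl(K_i(u_{(i)})-K_i(u_{(i-1)})\bigr)\,du=\int_I K_i'(s)\bigl(\delta_{(i-1)}(s)-\delta_{(i)}(s)\bigr)\,ds=\int_I\delta_{(i)}'(s)\,ds=1
\end{equation*}
for \emph{every} $C\in\cc^0_\delta$, with the same statement for the $K_1(u_{(1)})$ term. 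With this regrouping, $\int c(\log c_\delta)^-$ is finite for all $C\in\cc_\delta^0$, the remaining (nonnegative) part integrates to $\cj(\delta)$ plus finite quantities, and your argument closes in both cases, including the unambiguous value $+\infty$ of $\int c\log c_\delta$ in case (a) and the equality case $D(c\|c_\delta)=0\Rightarrow c=c_\delta$ in case (b).
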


The copula $C_\delta$ will be called the maximum entropy copula with
given multidiagonal.

\section{Maximum entropy distribution of order statistics with given marginals} \label{sec:max_entr_dens_ord_stat}

We use the results of Section \ref{sec:max_ent_copula}
to compute the density of the maximum entropy copula for
marginals $\mathbf{F} \in \cf_d^0$ with $\cf_d^0$ defined in Section \ref{sec:dens+entropy}. Recall $\delta^\mathbf{F}=(\delta_{(1)},\hdots,\delta_{(d)})=\mathbf{F}\circ
G^{-1}$ and the definition of  $\Sigma^{\delta^\mathbf{F}}$
in \reff{eq:def_sigma_F}.  Recall $K_i$ defined by \reff{eq:F}, for $1\leq i\leq d$ and $T^{\mathbf{F}}$ defined by \reff{eq:c_zero-T}.
We define the function
$c_\mathbf{F}$ on $I^d$, for $u=(u_1, \ldots, u_d)\in I^d$:
\begin{equation}
  \label{eq:def-cF}
   c_\mathbf{F}(u) =\prod_{i=2}^d
   \frac{\expp{K_i(\delta_{(i-1)}^{-1}(u_{i-1}))-K_i(\delta_{(i)}^{-1}(u_{i}))
     }}{\delta_{(i-1)}\circ \delta_{(i)} ^{-1}(u_i) -u_i}
            \ind_{\{u \in T^\mathbf{F}; ( \delta_{(1)}^{-1}(u_1),
             \hdots,\delta_{(d)}^{-1}(u_d)) \in L_{\delta^\mathbf{F}}\}}\,
     \ind_{\{\prod_{i=1}^d \delta_{(i)}'\circ \delta_{(i)}^{-1}(u_i)> 0\}}.
\end{equation}

  Recall the function $\cj(\delta)$ defined on the set of multidiagonals
  by $\reff{eq:cj_delta}$ and $C_{\delta^{\mathbf{F}}}$ the copula with
  density given by \reff{eq:c_delta}-\reff{eq:F}.
\begin{prop}
   \label{prop:c_F-density}
 Let $\mathbf{F}\in \cf_d^0$ .
    The  function $c_\mathbf{F}$ defined by \reff{eq:def-cF} is the
    density of the copula
    $C_\mathbf{F}=\cs^{-1}_{\mathbf{F}}(C_{\delta^{\mathbf{F}}}) $ which
    belongs to
    $\cc^{OS}(\mathbf{F})$. The entropy of $C_\mathbf{F}$ is given by:
 \begin{equation}
   \label{eq:H-CF}
     H(C_\mathbf{F})= d-1 - \cj(\delta^{\mathbf{F}}).
\end{equation}
\end{prop}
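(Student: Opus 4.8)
The plan is to identify $C_\mathbf{F}$ as the image under the inverse symmetrizing operator $S_\mathbf{F}^{-1}$ of the explicit copula $C_{\delta^\mathbf{F}}$ furnished by Proposition~\ref{prop:c_delta}, and then to read off both its density and its entropy from the machinery of Sections~\ref{sec:connection} and~\ref{sec:max_ent_copula}. First I would check that $\delta^\mathbf{F} \in \cd^0$: since $\mathbf{F} \in \cf_d^0$, Lemma~\ref{lem:F^0} gives $\val{\Sigma^\mathbf{F}}=0$, and the identity $\Sigma^{\delta^\mathbf{F}}=\Sigma^\mathbf{F}$ established in the proof of Lemma~\ref{lem:F^0} shows $\val{\Sigma^{\delta^\mathbf{F}}}=0$, so $\delta^\mathbf{F}\in \cd^0$ by Theorem~2 of \cite{jaworski2008distributions}. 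Hence Proposition~\ref{prop:c_delta} applies and produces the symmetric copula $C_{\delta^\mathbf{F}}\in \cc_{\delta^\mathbf{F}}^0 \cap \cc^{sym}$ with density $c_{\delta^\mathbf{F}}$ given by \reff{eq:c_delta}--\reff{eq:F}. By Proposition~\ref{prop:char_cfsym} this copula lies in $\cc^{sym}(\mathbf{F})\cap \cc^0$, so Lemma~\ref{lem:dens_c_ctilde} guarantees that $C_\mathbf{F}=S_\mathbf{F}^{-1}(C_{\delta^\mathbf{F}})$ is a well-defined copula in $\cc^{OS}(\mathbf{F})\cap \cc^0$ whose density is given by formula \reff{eq:c_tilde_to_c}.

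The central computation is then to substitute $c=c_{\delta^\mathbf{F}}$ into \reff{eq:c_tilde_to_c} and verify that the result equals $c_\mathbf{F}$ from \reff{eq:def-cF}. Writing $t_i=\delta_{(i)}^{-1}(u_i)$, the argument of $c_{\delta^\mathbf{F}}$ appearing in \reff{eq:c_tilde_to_c} is already ordered on $T^\mathbf{F}$ (as shown in the proof of Lemma~\ref{lem:dens_c_ctilde}), so its order statistics are the $t_i$ themselves, and $\ind_{L_{\delta^\mathbf{F}}}$ contributes exactly the set indicator appearing in \reff{eq:def-cF}. Using $K_i'(t)=\delta_{(i)}'(t)/(\delta_{(i-1)}(t)-\delta_{(i)}(t))$ from \reff{eq:F} and $\delta_{(i)}(t_i)=u_i$ (by continuity, \reff{eq:cont_J}), each factor $a_i(t_i)/\delta_{(i)}'(t_i)$ reduces to $\expp{K_{i+1}(t_i)-K_i(t_i)}/(\delta_{(i-1)}(t_i)-u_i)$, while \reff{eq:L_delta_leq_prod} absorbs the $\Psi$-indicators carried by the $a_i$ into $\ind_{L_{\delta^\mathbf{F}}}$ and the prefactor $d!$ cancels the $1/d!$ in $c_{\delta^\mathbf{F}}$. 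Taking the product over $1\leq i\leq d$, the exponents telescope: $\sum_{i=1}^d (K_{i+1}(t_i)-K_i(t_i))=\sum_{i=2}^d (K_i(t_{i-1})-K_i(t_i))-K_1(t_1)$, using $K_{d+1}=0$. Since $K_1(t_1)=-\log(1-u_1)$ by \reff{eq:K1}, the factor $\expp{-K_1(t_1)}=1-u_1$ cancels exactly against the $i=1$ denominator $\delta_{(0)}(t_1)-u_1=1-u_1$ (with the convention $\delta_{(0)}=1$), leaving precisely the product over $2\leq i\leq d$ in \reff{eq:def-cF}.

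For the entropy, I would use that $S_\mathbf{F}(C_\mathbf{F})=C_{\delta^\mathbf{F}}$ and apply Proposition~\ref{prop:H(C)_H(SFC)} with $C=C_\mathbf{F}$, which reads $H(C_{\delta^\mathbf{F}})=\log(d!)+H(C_\mathbf{F})+\sum_{i=1}^d H(\delta^\mathbf{F}_{(i)})$. Substituting the value of $H(C_{\delta^\mathbf{F}})$ from \reff{eq:H(c)} of Proposition~\ref{prop:c_delta}, the terms $\log(d!)$ and $\sum_{i=1}^d H(\delta^\mathbf{F}_{(i)})$ cancel, leaving $H(C_\mathbf{F})=(d-1)-\cj(\delta^\mathbf{F})$, which is \reff{eq:H-CF}; this identity holds also in the degenerate case $\cj(\delta^\mathbf{F})=+\infty$, both sides then being $-\infty$. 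The main obstacle is the bookkeeping in the density computation of the second paragraph --- in particular tracking the telescoping of the $K_i$-exponents and the cancellation of the boundary term $1-u_1$, together with confirming that the three indicator constraints (the ordering set $T^\mathbf{F}$, the set $L_{\delta^\mathbf{F}}$ pulled back through the $\delta_{(i)}^{-1}$, and the positivity of $\prod_{i=1}^d \delta_{(i)}'\circ \delta_{(i)}^{-1}$) combine into exactly the indicators of \reff{eq:def-cF}.
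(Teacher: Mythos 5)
Your proposal is correct and follows essentially the same route as the paper: identify $C_\mathbf{F}=S_\mathbf{F}^{-1}(C_{\delta^\mathbf{F}})$ via Proposition~\ref{prop:c_delta}, Proposition~\ref{prop:char_cfsym} and Lemma~\ref{lem:dens_c_ctilde}, compute the density by substituting $c_{\delta^\mathbf{F}}$ into \reff{eq:c_tilde_to_c}, and obtain the entropy by combining \reff{eq:h(K)} with \reff{eq:H(c)}. Your second paragraph simply spells out the telescoping and cancellation that the paper compresses into ``use \reff{eq:c_tilde_to_c} and \reff{eq:cont_J} to check \reff{eq:def-cF}'', and that verification is accurate.
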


\begin{proof}
  Since  $\mathbf{F} \in  \cf_d^0$,  we have that
  $\delta^{\mathbf{F}} \in \cd^0$. According     to
  Proposition~\ref{prop:c_delta},      $c_{\delta^{\mathbf{F}}}$      defined      by
  \reff{eq:c_delta}   is    the   density   of   a    symmetric   copula
  $C_{\delta^{\mathbf{F}}}$  which  belongs to  $\cc^{sym}(\mathbf{F}) \cap \cc^0$,
  thanks  to  Proposition   \ref{prop:char_cfsym} and Lemma \ref{lem:dens_c_ctilde}.  According  to  Lemma
  \ref{lem:dens_c_ctilde},  formula \reff{eq:c_tilde_to_c}  we get  that
  $c_{\mathbf{F}}=s^{-1}_{\mathbf{F}}(C_{\delta^{\mathbf{F}}})$     is
  therefore the  density of a  copula $C_{\mathbf{F}}$ which  belongs to
  $\cc^{OS}(\mathbf{F}) \cap \cc^0$.   Use  \reff{eq:c_tilde_to_c} and  \reff{eq:cont_J}  to
  check \reff{eq:def-cF}.
To conclude, use \reff{eq:h(K)} and \reff{eq:H(c)} to get \reff{eq:H-CF}.
   \end{proof}

   Analogously to Lemma \ref{lem:c_zero_Psi}, we have the following
   restriction on the support of all $F \in \cl_d^{OS}(\mathbf{F})
   \bigcap \cl^0_d$. Recall the definition of $\Psi^\mathbf{F}_i$ in
   \reff{eq:def_Psi_i_F}. The proof of the next Lemma is similar to the
   proof of Lemma \ref{lem:c_zero_Psi} and is left to the reader.
\begin{lem}
  Let $\mathbf{F} \in \cf_d^0$ and $t \in \mathbf{F}_{i}\left((\Psi^\mathbf{F}_i)^c\right)$ for some $2 \leq i \leq d$. Then we have for all $F \in \cl_d^{OS}(\mathbf{F}) \bigcap \cl^0_d$ with density function $f$:
  \[
     f(x)\ind_{\{ x_{i-1} < t < x_{i}\}}=0 \quad \text{ for a.e. } x=(x_1,\hdots, x_d) \in S.
  \]
\end{lem}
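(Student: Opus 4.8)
The plan is to mirror the proof of Lemma \ref{lem:c_zero_Psi}, but working directly with a vector of order statistics instead of with a copula. Let $F \in \cl_d^{OS}(\mathbf{F}) \cap \cl_d^0$ and let $X=(X_1,\hdots,X_d)$ be a random vector with cdf $F$ and density $f$. Since $F \in \cl_d^{OS}(\mathbf{F})$, the vector $X$ is a vector of order statistics, so a.s. $X_1 \leq \cdots \leq X_d$, and its one-dimensional marginals are $\mathbf{F}_1,\hdots,\mathbf{F}_d$, which are continuous because $\mathbf{F}\in\cf_d$. The operative content of the hypothesis is that the threshold $t$ is a point where $\mathbf{F}_{i-1}(t)=\mathbf{F}_i(t)$, i.e. $t\in(\Psi^\mathbf{F}_i)^c$ in the notation of \reff{eq:def_Psi_i_F}.

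First I would reduce the pointwise-a.e.\ statement to a single probability computation. Integrating the density over the slab $\{x\in\R^d;\ x_{i-1}<t<x_i\}$ gives
\[
\int_{\R^d} f(x)\,\ind_{\{x_{i-1}<t<x_i\}}\,dx = \P(X_{i-1}<t<X_i).
\]
Since $f\geq 0$ a.e., once this probability is shown to vanish we immediately get $f(x)\,\ind_{\{x_{i-1}<t<x_i\}}=0$ for a.e.\ $x\in\R^d$, hence a fortiori for a.e.\ $x\in S$. Thus it suffices to prove $\P(X_{i-1}<t<X_i)=0$. Note that, in contrast to Lemma \ref{lem:c_zero_Psi}, only a single threshold $t$ is involved, so no countable-union argument over the endpoints of $\Psi^\mathbf{F}_i$ is required here.

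The key step is the identity $\P(X_{i-1}<t<X_i)=\mathbf{F}_{i-1}(t)-\mathbf{F}_i(t)$. Because $X$ is ordered we have $\{X_i\leq t\}\subset\{X_{i-1}\leq t\}$, and since $\mathbf{F}_{i-1}$ is continuous, $\P(X_{i-1}=t)=0$; hence, up to a null event, $\{X_{i-1}<t\}$ is the disjoint union of $\{X_i\leq t\}$ and $\{X_{i-1}<t<X_i\}$. Taking probabilities and using continuity of the marginals ($\P(X_{i-1}<t)=\mathbf{F}_{i-1}(t)$ and $\P(X_i\leq t)=\mathbf{F}_i(t)$) yields the identity. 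Since $t\in(\Psi^\mathbf{F}_i)^c$ means $\mathbf{F}_{i-1}(t)=\mathbf{F}_i(t)$, the right-hand side is $0$, which completes the argument.

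The only delicate point—the closest thing to an obstacle—is the set-theoretic decomposition of $\{X_{i-1}<t\}$: the strict versus non-strict inequalities must be reconciled, but the continuity of the marginals (and the resulting absence of atoms) makes every discrepancy a null event, exactly as in the diagonal-section computation underlying Lemma \ref{lem:c_zero_Psi}. Everything else is the routine translation from a vanishing probability to the a.e.\ vanishing of the nonnegative density $f$.
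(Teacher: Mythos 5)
Your proof is correct and is precisely the argument the paper intends: the paper omits the proof, saying only that it is ``similar to the proof of Lemma \ref{lem:c_zero_Psi}'', and your computation $\P(X_{i-1}<t<X_i)=\P(X_{i-1}<t)-\P(X_i\leq t)=\mathbf{F}_{i-1}(t)-\mathbf{F}_i(t)=0$, justified by the a.s.\ ordering and the continuity of the marginals, is the direct analogue of the computation there (and you rightly note that the countable-union step over the endpoints of $\Psi^\mathbf{F}_i$ is not needed here since $t$ is fixed before the ``a.e.\ $x$'' quantifier). One remark: you silently read the hypothesis as $t\in(\Psi^\mathbf{F}_i)^c$ rather than the literal $t\in\mathbf{F}_i\left((\Psi^\mathbf{F}_i)^c\right)$; this is the only reading under which the conclusion is meaningful and true (it is what the definition of $L^\mathbf{F}$ and Lemma \ref{lem:LD_LF} require), so the printed hypothesis is evidently a typo, and your interpretation is the right one.
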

For $\delta \in \cd$, recall the definition of $L_{\delta}$ in \reff{eq:def_LD}. Let $L^{\delta}=L_{\delta} \cap S$. More generally, for $\mathbf{F} \in \cf_d$, we set:
\begin{equation} \label{eq:def_LF}
   L^\mathbf{F}=\{x=(x_1,\hdots, x_d) \in S; (x_{i-1},x_{i}) \subset \Psi^\mathbf{F}_i  \text{ for all } 2 \leq i \leq d\}.
\end{equation}
The next  Lemma establishes the connection between the sets $L_{\delta^\mathbf{F}}$ defined by \reff{eq:def_LD} and $L^\mathbf{F}$.

\begin{lem} \label{lem:LD_LF}
 Let $\mathbf{F}=(\mathbf{F}_1, \hdots, \mathbf{F}_d) \in \cf_d^0$ with density functions $\mathbf{f}_i$ for $1 \leq i \leq d$. Let  $\delta^\mathbf{F}=(\delta_{(1)}, \hdots, \delta_{(d)})$ given by \reff{eq:d_i+-1}, $T^\mathbf{F}$ given by \reff{eq:c_zero-T} and $L_{\delta^\mathbf{F}}$ defined by \reff{eq:def_LD}. Then for
  $ \prod_{i=1}^d \mathbf{f}_i(x_i)dx_1 \hdots x_d$-a.e. $x \in \R^d$ we have that
 \[
    \ind_{T^\mathbf{F}}\left(\mathbf{F}_1(x_1), \hdots, \mathbf{F}_d(x_d)\right) \ind_{L_{\delta^\mathbf{F}}}\left(\delta_{(1)}^{-1} \circ \mathbf{F}_1(x_1), \hdots,\delta_{(d)}^{-1} \circ \mathbf{F}_d(x_d)\right) =
    \ind_{L^\mathbf{F}}(x)
 \]

\end{lem}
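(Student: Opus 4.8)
The plan is to rewrite both indicators on the left-hand side using that, with respect to the reference measure $\prod_{i=1}^d \mathbf{f}_i(x_i)\,dx$, almost every $x$ satisfies $x_i \in I_g(\mathbf{F}_i)$ for each $1\leq i\leq d$; this follows from \reff{eq:IgJc} exactly as in the proof of Lemma \ref{lem:c_support}. For such $x$, \reff{eq:inv2} gives $\mathbf{F}_i^{-1}\circ \mathbf{F}_i(x_i)=x_i$, so by the definition \reff{eq:c_zero-T} of $T^{\mathbf{F}}$ one has $\ind_{T^{\mathbf{F}}}(\mathbf{F}_1(x_1),\hdots,\mathbf{F}_d(x_d))=\ind_S(x)$. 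Likewise, since $\delta_{(i)}^{-1}=G\circ \mathbf{F}_i^{-1}$ by \reff{eq:d_i+-1}, we get $\delta_{(i)}^{-1}\circ \mathbf{F}_i(x_i)=G(x_i)$, so the second indicator becomes $\ind_{L_{\delta^{\mathbf{F}}}}(G(x_1),\hdots,G(x_d))$. Thus it remains to prove that for $\prod_i\mathbf{f}_i(x_i)\,dx$-a.e. $x\in S$ one has $\ind_{L_{\delta^{\mathbf{F}}}}(G(x_1),\hdots,G(x_d))=\ind_{L^{\mathbf{F}}}(x)$.

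First I would note that for $x\in S$ the vector $(G(x_1),\hdots,G(x_d))$ is already non-decreasing, since $G$ is non-decreasing, so by \reff{eq:def_LD} and \reff{eq:def_LF} both sides factorize over consecutive pairs and it suffices to establish, for each $2\leq i\leq d$ and a.e. $x\in S$, the equivalence
\[
(G(x_{i-1}),G(x_i))\subset \Psi_i^{\delta^{\mathbf{F}}} \quad\Longleftrightarrow\quad (x_{i-1},x_i)\subset \Psi_i^{\mathbf{F}}.
\]
The crucial identity, valid for every $s\in\R$, is $\delta_{(i-1)}(G(s))-\delta_{(i)}(G(s))=\mathbf{F}_{i-1}(s)-\mathbf{F}_i(s)$, which follows from $\delta_{(i)}=\mathbf{F}_i\circ G^{-1}$ and \reff{eq:FiG-1G=Fi}. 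The implication $\Leftarrow$ is then immediate and holds pointwise: as $G$ is continuous and non-decreasing, any $r\in(G(x_{i-1}),G(x_i))$ equals $G(s)$ for some $s\in(x_{i-1},x_i)$, and the identity gives $\delta_{(i-1)}(r)-\delta_{(i)}(r)=\mathbf{F}_{i-1}(s)-\mathbf{F}_i(s)>0$, so $r\in\Psi_i^{\delta^{\mathbf{F}}}$.

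The hard part is the reverse implication, where flat parts of $G$ must be controlled, and this is where the $\prod_i\mathbf{f}_i(x_i)\,dx$-a.e. qualification is used. Assume $(G(x_{i-1}),G(x_i))\subset \Psi_i^{\delta^{\mathbf{F}}}$ and fix $s\in(x_{i-1},x_i)$; since a.e. $x_i\in I_g(\mathbf{F}_i)\subset I_g(G)$ (recall $I_g(G)=\bigcup_k I_g(\mathbf{F}_k)$), definition \reff{eq:def-Ig} yields $G(s)<G(x_i)$. If moreover $G(s)>G(x_{i-1})$, then $G(s)$ lies in the open interval and the identity above forces $\mathbf{F}_{i-1}(s)>\mathbf{F}_i(s)$, that is $s\in\Psi_i^{\mathbf{F}}$. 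The only remaining possibility is $G(s)=G(x_{i-1})$, which means that $G$, hence every $\mathbf{F}_k$ as an average of non-decreasing functions, is constant on $[x_{i-1},s]$ with $s>x_{i-1}$; combined with $x_{i-1}\in I_g(\mathbf{F}_{i-1})$ this forces $x_{i-1}$ to be the left endpoint of a nondegenerate interval of constancy of $\mathbf{F}_{i-1}$. As a monotone function admits at most countably many such intervals, the set of admissible values of $x_{i-1}$ is countable, hence Lebesgue-null, so the set of $x$ realizing this case carries no $\prod_i\mathbf{f}_i(x_i)\,dx$-mass because $\mathbf{f}_{i-1}(x_{i-1})\,dx_{i-1}$ is absolutely continuous. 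Discarding this exceptional null set yields the pairwise equivalence for a.e. $x\in S$, and hence the claimed identity.
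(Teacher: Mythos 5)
Your argument is correct and follows the same overall route as the paper's proof: first reduce, via $\mathbf{F}_i^{-1}\circ\mathbf{F}_i(x_i)=x_i$ a.e.\ and $\delta_{(i)}^{-1}=G\circ\mathbf{F}_i^{-1}$, to showing that for a.e.\ $x\in S$ and each $2\leq i\leq d$ the condition $(G(x_{i-1}),G(x_i))\subset \Psi_i^{\delta^{\mathbf{F}}}$ is equivalent to $(x_{i-1},x_i)\subset \Psi_i^{\mathbf{F}}$, exploiting $\delta_{(i)}=\mathbf{F}_i\circ G^{-1}$ and $\delta_{(i)}\circ G=\mathbf{F}_i$. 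The one place where you genuinely diverge is in how the two interval conditions are matched. The paper runs the change of variables through the generalized inverse, $s=G^{-1}(t)$, and records the equivalences $t<G(x_i)\Leftrightarrow s<x_i$ and $G(x_{i-1})<t\Leftrightarrow x_{i-1}<s$ (the latter using $x_{i-1}\in I_g(G)$ a.e.), without explicitly discussing the points $s\in(x_{i-1},x_i)$ that are not attained by $G^{-1}$, namely those lying in a flat piece of $G$ starting at $x_{i-1}$. You substitute in the forward direction, $r=G(s)$, which makes one implication pointwise and isolates exactly that degenerate case $G(s)=G(x_{i-1})$; you then dispose of it by noting that the offending $x_{i-1}$ are left endpoints of nondegenerate constancy intervals of $\mathbf{F}_{i-1}$, of which there are at most countably many, hence a null set for $\mathbf{f}_{i-1}(x_{i-1})\,dx_{i-1}$. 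This countability argument makes the role of the a.e.\ qualifier more explicit than in the paper's version of that step; otherwise the two proofs coincide.
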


\begin{proof}

According to \reff{eq:IgJc} and  \reff{eq:inv2}, we have $\mathbf{f}_{i} (t)\, dt$-a.e. that
$\mathbf{F}_{i}^{-1}
\circ\mathbf{F}_{i}(t)=t$.
This implies  that $\prod_{i=1} \mathbf{f}_{i} (x_i)\, dx_1\cdots
dx_d$-a.e., $(\mathbf{F}_{1}(x_1), \ldots, \mathbf{F}_{d}(x_d))$
belongs to $T^{\mathbf{F}}$  if and only if $x \in S$.
Recall the sets $\Psi_i^{\delta^\mathbf{F}}$ given by \reff{eq:def_Psi_i_F}. For $\prod_{i=1} \mathbf{f}_{i} (x_i)\, dx_1\cdots
dx_d$-a.e. $x \in S$, we have:
\begin{align*}
& (\delta_{(1)}^{-1}\circ\mathbf{F}_1(x_1),
\hdots,\delta_{(d)}^{-1}\circ\mathbf{F}_d(x_d)) \in
L_{\delta^\mathbf{F}}   \\
\Longleftrightarrow & \left(\delta_{(i-1)}^{-1}\circ\mathbf{F}_{i-1}(x_{i-1}),
  \delta_{(i)}^{-1}\circ\mathbf{F}_i(x_i)\right)
\subset \Psi_i^{\delta^\mathbf{F}}, \quad 2 \leq i \leq  d\\
\Longleftrightarrow  & \forall t\in
 \left(\delta_{(i-1)}^{-1}\circ\mathbf{F}_{i-1}(x_{i-1}),\delta_{(i)}^{-1}
   \circ \mathbf{F}_i(x_i)\right) : \delta_{(i-1)}(t) > \delta_{(i)}(t),
 \quad 2 \leq i \leq  d\\
\Longleftrightarrow & \forall t\in \left(G \circ
  \mathbf{F}^{-1}_{i-1}\circ\mathbf{F}_{i-1}(x_{i-1}),G \circ
  \mathbf{F}^{-1}_i \circ \mathbf{F}_i(x_i)\right) : \mathbf{F}_{i-1}
\circ G^{-1} (t) > \mathbf{F}_{i} \circ G^{-1} (t), \quad  2 \leq i
\leq  d  \\
\Longleftrightarrow & \forall t \in \left(G(x_{i-1}),G(x_i)\right) :
\mathbf{F}_{i-1} \circ  G^{-1} (t) > \mathbf{F}_{i} \circ G^{-1} (t),
\quad  2 \leq i \leq  d,
\end{align*}
where   the   first   equivalence   comes   from   the   definition   of
$L_{\delta^\mathbf{F}}   $,   the   second  from   the   definition   of
$\psi_i^{\delta^\mathbf{F}}$,  the third  from \reff{eq:d_i+-1}  and the
last      from     the      fact     that      $\mathbf{f}_{i}     (t)\,
dt$-a.e.  $\mathbf{F}_{i}^{-1} \circ\mathbf{F}_{i}(t)=t$.   Consider the
change of variable $s=G^{-1}(t)$. We have by \reff{eq:inv}:
\[
  t < G(x_i) \Longleftrightarrow G^{-1}(t) < x_i \Longleftrightarrow s <x_i.
\]
Since $x_{i-1} \in I_g(\mathbf{F}_{i-1})$  $\mathbf{f}_{i-1}     (x_{i-1})\,
dx_{i-1}$-a.e., we get  $x_{i-1} \in I_g(G)$ and by \reff{eq:inv2}:
\[
  G(x_{i-1}) < t \Longleftrightarrow x_{i-1} < G^{-1}(t) \Longleftrightarrow x_{i-1} < s .
\]
Therefore we deduce that  $\prod_{i=1} \mathbf{f}_{i} (x_i)\, dx_1\cdots
dx_d$-a.e. $x \in S$:
\begin{align*}
&(\delta_{(1)}^{-1}\circ\mathbf{F}_1(x_1),
\hdots,\delta_{(d)}^{-1}\circ\mathbf{F}_d(x_d)) \in
L_{\delta^\mathbf{F}}    \\
\Longleftrightarrow &  \forall s \in \left(x_{i-1},x_i\right) :
\mathbf{F}_{i-1} (s) > \mathbf{F}_{i} (s), \quad 2 \leq i \leq  d\\
 \Longleftrightarrow & x \in L^\mathbf{F}.
\end{align*}
\end{proof}

Using Proposition \ref{prop:H(C)_H(SFC)}, we check that the copula $C_{\mathbf{F}}$ maximizes the entropy over the
set $\cc^{OS}(\mathbf{F})$. We set for $x=(x_1, \hdots, x_d)\in \R^d$:
\begin{equation}
   \label{eq:def-FF}
            F_{\mathbf{F}}(x)=C_{\mathbf{F}}({\mathbf{F}}_{1}(x_1), \hdots,{\mathbf{F}}_{d}(x_d)).
\end{equation}

Let $\mathbf{f}_i$ denote the density function of $\mathbf{F}_i$ when it
exists. Let us further note for $2 \leq i \leq d$, $t \in \R$:
\begin{equation} \label{eq:def_l}
  \ell_i(t) = \frac{\mathbf{f}_i(t)}{\mathbf{F}_{i-1}(t) - \mathbf{F}_i(t)}\cdot
\end{equation}
When the densities $\mathbf{f}_i$ exist for all $1 \leq i \leq d$, we define the function $f_{\mathbf{F}}$ for $x=(x_1,\hdots, x_d) \in \R^d$ as:
   \begin{equation} \label{eq:def_fF}
     f_{\mathbf{F}}(x)=
            \mathbf{f}_{1} (x_1) \prod_{i=2}^d \ell_i(x_i)  \exp\left(-\int_{x_{i-1}}^{x_i}
           \ell_i(s)  \, ds \right) \ind_{L^\mathbf{F}}(x),
   \end{equation}
   with  $L^\mathbf{F}$  given  by \reff{eq:def_LF}.  The  next  theorem
   asserts that the cdf $F_{\mathbf{F}}$  maximizes the entropy over the
   set   $\cl_{d}^{OS}(\mathbf{F})$   and   that  its   density   is   $
   f_{\mathbf{F}}$.  Recall $\cj$ defined by \reff{eq:cj_delta}.

\begin{theo}
\label{theo:f*}
Let  $\mathbf{F}=(\mathbf{F}_i,  1\leq     i\leq d) \in \cf_d$.
     \begin{enumerate}
        \item[(a)] If there exists $1\leq i \leq d$ such that
          $H(\mathbf{F}_i)=-\infty$, or if $\cj({\mathbf{F}}) = +
          \infty$, then we have $\max _{F \in \cl_{d}^{OS}(\mathbf{F})}
          H(F)=-\infty$.
        \item[(b)] If  $H(\mathbf{F}_i) >  - \infty$ for  all $1  \leq i
           \leq  d$, and  $\cj({\mathbf{F}}) <  + \infty$,  then we  have
           $\mathbf{F}       \in       \cf_d^0$,       $\max_{F       \in
             \cl_{d}^{OS}(\mathbf{F})}    H(F)>     -\infty$,    and    $
           F_{\mathbf{F}}$ defined in \reff{eq:def-FF}  is the unique cdf
           in $  \cl_{d}^{OS}(\mathbf{F})  $ such             that
         $H(F_{\mathbf{F}})=\max_{F\in
             \cl_{d}^{OS}(\mathbf{F})}  H(F)$.  Furthermore,  the density
           function  of   $F_{\mathbf{F}}$  exists,   and  is   given  by
           $f_{\mathbf{F}}$ defined in \reff{eq:def_fF}. We also have:
\[
H(F_{\mathbf{F}})=d-1  + \sum_{i=1}^d H({\mathbf{F}_i}) -
\cj(\mathbf{F}).
\]
     \end{enumerate}
   \end{theo}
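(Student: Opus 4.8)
The plan is to assemble the machinery of Sections \ref{sec:connection} and \ref{sec:max_ent_copula} rather than to optimize directly over $\cl_d^{OS}(\mathbf{F})$. Since the marginals $\mathbf{F}$ are fixed, the entropy decomposition \reff{eq:entr_decomp} together with Sklar's bijection $F\mapsto C_F$ reduces the maximization of $H$ over $\cl_d^{OS}(\mathbf{F})$ to the maximization of $H$ over $\cc^{OS}(\mathbf{F})$. By Lemma \ref{lem:one_to_one} and Proposition \ref{prop:char_cfsym}, the symmetrizing operator $S_\mathbf{F}$ is a bijection from $\cc^{OS}(\mathbf{F})$ onto $\cc_{\delta^\mathbf{F}}\cap\cc^{sym}$, and by Proposition \ref{prop:H(C)_H(SFC)} it changes the entropy only by the additive constant $\log(d!)+\sum_{i=1}^d H(\delta^\mathbf{F}_{(i)})$, which is finite by Remark \ref{rem:dlogd}. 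Hence the whole problem is transported to maximizing $H$ over $\cc_{\delta^\mathbf{F}}\cap\cc^{sym}$, for which Theorem \ref{theo:spec} applied to $\delta=\delta^\mathbf{F}$ is the decisive input; here one uses $\cj(\mathbf{F})=\cj(\delta^\mathbf{F})$ from Lemma \ref{lem:J(F)}.

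For case (a) I would argue that $H(F)=-\infty$ for every $F\in\cl_d^{OS}(\mathbf{F})$. If $F\notin\cl_d^0$ this is immediate from \reff{eq:def-entropy}, so assume $F\in\cl_d^0$ and use \reff{eq:entr_decomp}. If $H(\mathbf{F}_i)=-\infty$ for some $i$, then since a copula is supported on $I^d$ its entropy satisfies $H(C_F)\leq 0$, so the right-hand side of \reff{eq:entr_decomp} equals $-\infty$. If instead all marginal entropies are finite but $\cj(\mathbf{F})=\cj(\delta^\mathbf{F})=+\infty$, then $S_\mathbf{F}(C_F)\in\cc_{\delta^\mathbf{F}}$ together with Theorem \ref{theo:spec}(a) forces $H(S_\mathbf{F}(C_F))=-\infty$, whence $H(C_F)=-\infty$ by Proposition \ref{prop:H(C)_H(SFC)} and again $H(F)=-\infty$ by \reff{eq:entr_decomp}.

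For case (b), Lemma \ref{lem:F^0} first yields $\mathbf{F}\in\cf_d^0$ from $H(\mathbf{F}_i)>-\infty$ (so $\mathbf{F}_i\in\cl_1^0$) and $\cj(\mathbf{F})<+\infty$; in particular $\delta^\mathbf{F}\in\cd^0$. Theorem \ref{theo:spec}(b) then provides $C_{\delta^\mathbf{F}}$ as the unique maximizer of $H$ over $\cc_{\delta^\mathbf{F}}$, with finite entropy. The crucial observation, and the logical heart of the argument, is that although Theorem \ref{theo:spec} optimizes over all of $\cc_{\delta^\mathbf{F}}$, its maximizer $C_{\delta^\mathbf{F}}$ is itself symmetric (Proposition \ref{prop:c_delta}); therefore it is also the unique maximizer over the smaller set $\cc_{\delta^\mathbf{F}}\cap\cc^{sym}$. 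Transporting back through the entropy-preserving bijection $S_\mathbf{F}$ of Proposition \ref{prop:H(C)_H(SFC)}, the copula $C_\mathbf{F}=S_\mathbf{F}^{-1}(C_{\delta^\mathbf{F}})$ is the unique maximizer of $H$ over $\cc^{OS}(\mathbf{F})$, and via Sklar the cdf $F_\mathbf{F}$ defined in \reff{eq:def-FF} is the unique maximizer over $\cl_d^{OS}(\mathbf{F})$. The entropy value follows by combining \reff{eq:entr_decomp}, the identity $H(C_\mathbf{F})=d-1-\cj(\delta^\mathbf{F})$ from \reff{eq:H-CF} in Proposition \ref{prop:c_F-density}, and $\cj(\delta^\mathbf{F})=\cj(\mathbf{F})$.

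It remains to identify the density, which I expect to be the main computational obstacle. Since $C_\mathbf{F}$ is absolutely continuous with density $c_\mathbf{F}$ of \reff{eq:def-cF} (Proposition \ref{prop:c_F-density}) and each $\mathbf{F}_i$ has density $\mathbf{f}_i$, the cdf $F_\mathbf{F}$ of \reff{eq:def-FF} has density $f_\mathbf{F}(x)=c_\mathbf{F}(\mathbf{F}_1(x_1),\hdots,\mathbf{F}_d(x_d))\prod_{i=1}^d\mathbf{f}_i(x_i)$. One then substitutes $\delta_{(i)}^{-1}=G\circ\mathbf{F}_i^{-1}$ and uses $\mathbf{F}_i^{-1}\circ\mathbf{F}_i=\mathrm{id}$ and $\mathbf{F}_{i-1}\circ G^{-1}\circ G=\mathbf{F}_{i-1}$ ($\mathbf{f}_i\,dx_i$-a.e., from Lemma \ref{lem:G_Ginv} and \reff{eq:inv2}), so that the denominator $\delta_{(i-1)}\circ\delta_{(i)}^{-1}(u_i)-u_i$ becomes $\mathbf{F}_{i-1}(x_i)-\mathbf{F}_i(x_i)$; the change of variable $s=G(r)$ turns the difference $K_i(G(x_{i-1}))-K_i(G(x_i))$ into $-\int_{x_{i-1}}^{x_i}\ell_i(s)\,ds$ with $\ell_i$ as in \reff{eq:def_l}; and Lemma \ref{lem:LD_LF} collapses the two indicators of $c_\mathbf{F}$ into $\ind_{L^\mathbf{F}}(x)$. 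Regrouping the factors $\mathbf{f}_i(x_i)/(\mathbf{F}_{i-1}(x_i)-\mathbf{F}_i(x_i))=\ell_i(x_i)$ then yields exactly \reff{eq:def_fF}.
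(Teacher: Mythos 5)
Your proposal is correct and follows essentially the same route as the paper: reduce to copulas via the entropy decomposition \reff{eq:entr_decomp}, transport the problem through the entropy-preserving bijection $S_\mathbf{F}$ (Lemma \ref{lem:one_to_one}, Propositions \ref{prop:char_cfsym} and \ref{prop:H(C)_H(SFC)}) to the fixed-multidiagonal problem solved by Theorem \ref{theo:spec}, and recover the density by the substitutions $\delta_{(i)}^{-1}=G\circ\mathbf{F}_i^{-1}$ together with Lemma \ref{lem:LD_LF}. The only presentational difference is that you invoke Theorem \ref{theo:spec}(a) as a black box for case (a), whereas the paper proves the two case (a) statements jointly in Section \ref{sec:case-a}; this is legitimate since Theorem \ref{theo:spec} is an independently stated result.
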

\begin{proof}
The proof of case (a) is postponed to Section \ref{sec:case-a}.

We shall assume that $H(\mathbf{F}_i) > - \infty$ for all $1 \leq i \leq
d$ and  $\cj(\delta^{\mathbf{F}}) <  + \infty$.   This implies  that the
densities  $\mathbf{f}_i$ of  $\mathbf{F}_i$ exist  for $1\leq  i\leq d$
and, thanks to Lemma \ref{lem:F^0},  that $\mathbf{F} \in \cf_d^0$.  Let
$F_{\mathbf{F}}$ be  defined by \reff{eq:def-FF},  that is the  cdf with
copula  $C_{\mathbf{F}}$  from  Proposition  \ref{prop:c_F-density}  and
one-dimensional  marginals  cdf  ${\mathbf{F}}$. Thanks  to  Proposition
\ref{prop:c_F-density},        we         have        $F_{\mathbf{F}}\in
\cl_{d}^{OS}(\mathbf{F})$.

  We      deduce      from      \reff{eq:entr_decomp},      Propositions
  \ref{prop:char_cfsym}    and   \ref{prop:H(C)_H(SFC)},   Theorem
  \ref{theo:spec} case (b) and Proposition
  \ref{prop:c_F-density} that  $F_{\mathbf{F}}$  is  the only  cdf  such  that
  $H(F_{\mathbf{F}})=\max_{F\in \cl_{d}^{OS}(\mathbf{F})} H(F)$.
We deduce  from \reff{eq:entr_decomp},
     \reff{eq:H-CF} and Lemma \ref{lem:J(F)} that:
\[
H(F_{\mathbf{F}})=d-1  + \sum_{i=1}^ d H({\mathbf{F}_i}) -
\cj(\mathbf{F}).
\]

Since the copula $C_{\mathbf{F}}$  is absolutely continuous with density
$c_{\mathbf{F}}$   given    in   \reff{eq:def-cF},   we    deduce   from
\reff{eq:def-FF} that $F_{\mathbf{F}}$ has density $f_\mathbf{F}$ given
by, for a.e. $x=(x_1, \ldots,, x_d) \in \R^d$:
\begin{equation} \label{eq:f*_c*_fi}
       f_\mathbf{F}(x)= c_\mathbf{F}\left( \mathbf{F}_{1}(x_1),
         \hdots,\mathbf{F}_{d}(x_d)\right)
        \prod_{i=1}^d \mathbf{f}_{i} (x_i).
\end{equation}
Recall the expression \reff{eq:def-cF} of $c_\mathbf{F}$ as well as
$K_i$ defined  by \reff{eq:F}, for $1\leq i\leq d$.
Using the change of variable $s=G^{-1}(t)$ and \reff{eq:FiG-1G=Fi}, we
get (similarly to the proof of Lemma \ref{lem:J(F)}):
\begin{equation}
   \label{eq:K}
K_i\circ
\delta_{(i)}^{-1} \circ \mathbf{F}_{i}(x_i) - K_i\circ
\delta_{(i-1)}^{-1} \circ \mathbf{F}_{i-1}(x_{i-1})
= \int_{\mathbf{F}_{i-1}^{-1}\circ \mathbf{F}_{i-1}(x_{i-1})  }
^{\mathbf{F}_{i}^{-1}\circ \mathbf{F}_{i}(x_{i})  }
\ell_i(s)\,ds.
\end{equation}
Using \reff{eq:FiG-1G=Fi}, we also get:
\begin{equation}
   \label{eq:df}
\frac{\mathbf{f}_{i} (x_i)}{\delta_{(i-1)} \circ \delta_{(i)}^{-1} \circ\mathbf{F}_{i}
  (x_i)
  -\mathbf{F}_{i}(x_i)}
= \frac{\mathbf{f}_{i} (x_i)}{\mathbf{F}_{i-1}\circ \mathbf{F}_{i}^{-1} \circ\mathbf{F}_{i}
  (x_i)
  -\mathbf{F}_{i}(x_i)} \cdot
\end{equation}
According to \reff{eq:inv2}, we have $\mathbf{f}_{i} (t)\, dt$-a.e. that
$\mathbf{F}_{i}^{-1}
\circ\mathbf{F}_{i}(t)=t$.
For $1 \leq i \leq d$, we have from \reff{eq:inv2} that  $\prod_{i=1} \mathbf{f}_{i} (x_i)\, dx_1\cdots
dx_d$-a.e.:

\begin{equation} \label{eq:ind_delta'=1}
   \ind_{\left\{\prod_{i=1}^d \delta_{(i)}'\circ \delta_{(i)}^{-1}(\mathbf{F}_{i}(x_i))> 0\right\}} = 1.
\end{equation}
We deduce from \reff{eq:def-cF}, \reff{eq:f*_c*_fi}, \reff{eq:K}, \reff{eq:df}, \reff{eq:ind_delta'=1} and Lemma \ref{lem:LD_LF} that a.e. for $x=(x_1, \ldots, x_d)\in
\R^d$:
\[
f_\mathbf{F}(x)
=
\mathbf{f}_{1} (x_1) \prod_{i=2}^d
\ell_i(x_i)
 \expp{-\int_{x_{i-1}}^{x_i}
\ell_i(s) \, ds } \, \ind_{L^\mathbf{F}}(x).
\]
\end{proof}

\begin{rem}
   \label{rem:f_F=density}
We deduce from the proof of Theorem \ref{theo:f*}  case (b) and
Proposition \ref{prop:c_F-density}, that if
$\mathbf{F}=(\mathbf{F}_i,  1\leq     i\leq d) \in \cf_d^0$, then
$f_\mathbf{F}$ defined by \reff{eq:def_fF} is a  probability density function on
$S\subset \R^d$.
\end{rem}

\begin{rem} \label{rem:f*=prod_pi}
     The density $f_\mathbf{F}$ has a product form on $L^\mathbf{F}$, that is it
     can be written as, for
     a.e. $x=(x_1,\hdots,x_d) \in \R^d$:
\begin{equation}\label{eq:f*=prod_pi}
f_\mathbf{F}(x) = \prod_{i=1}^d p_i(x_i) \ind_{L^\mathbf{F}}(x) ,
\end{equation}
     where the functions $(p_i, 1\leq i\leq d)$ are measurable and
     non-negative.
\end{rem}

  In addition to Remark \ref{rem:f*=prod_pi}, the next
  Corollary asserts that $F_{\mathbf{F}}$ is the only element of
  $\cl_{d}^{OS}(\mathbf{F})$, whose density has a product form.

\begin{cor}
\label{cor:f*_uni_prod}
  Let $\mathbf{F} \in \cf_d^0$. Let $F \in \cl_{d}^{OS}(\mathbf{F})$ be an absolutely continuous cdf with density $f$ given by, a.e. for $x=(x_1,\hdots, x_d) \in \R^d$:
  $f(x)=\prod_{i=1}^d h_i(x_i)\ind_{L^\mathbf{F}}(x)$,
  with $h_i$, $1 \leq i \leq d$ some measurable non-negative functions
  on $\R$. Then we have $F=F_{\mathbf{F}}$  on $\R^d$.
  \end{cor}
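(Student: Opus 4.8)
The plan is to transport the statement through the symmetrizing bijection $S_{\mathbf{F}}$ and reduce it to the uniqueness of the product-form \emph{copula} density obtained in Proposition \ref{prop:c_delta_unique_product}. Since $F\in \cl_d^{OS}(\mathbf{F})\cap \cl_d^0$, its copula $C_F$ lies in $\cc^{OS}(\mathbf{F})\cap \cc^0$; writing $c_F$ for its density, the standard change of variables gives a.e. $c_F(u)=f(\mathbf{F}_1^{-1}(u_1),\hdots,\mathbf{F}_d^{-1}(u_d))\big/\prod_{i=1}^d \mathbf{f}_i(\mathbf{F}_i^{-1}(u_i))$, the densities $\mathbf{f}_i$ existing because $\mathbf{F}\in \cf_d^0$ forces $\mathbf{F}_i\in \cl_1^0$ by Lemma \ref{lem:F^0}. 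Inserting $f(x)=\prod_i h_i(x_i)\ind_{L^\mathbf{F}}(x)$ and using the substituted form of Lemma \ref{lem:LD_LF}, namely $\ind_{L^\mathbf{F}}(\mathbf{F}_1^{-1}(u_1),\hdots,\mathbf{F}_d^{-1}(u_d))=\ind_{T^\mathbf{F}}(u)\,\ind_{L_{\delta^\mathbf{F}}}(\delta_{(1)}^{-1}(u_1),\hdots,\delta_{(d)}^{-1}(u_d))$ a.e., I would conclude that $c_F$ itself factorizes as $c_F(u)=\prod_i \tilde h_i(u_i)\,\ind_{T^\mathbf{F}}(u)\,\ind_{L_{\delta^\mathbf{F}}}(\delta^{-1}(u))$, with $\tilde h_i(u_i)=h_i(\mathbf{F}_i^{-1}(u_i))/\mathbf{f}_i(\mathbf{F}_i^{-1}(u_i))$.

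Next I would push $C_F$ through $S_{\mathbf{F}}$. By Lemma \ref{lem:dens_c_ctilde}, $S_{\mathbf{F}}(C_F)\in \cc^{sym}(\mathbf{F})\cap \cc^0$ and its density is given by \reff{eq:c_to_ctilde}. Substituting $v_i=\delta_{(i)}(u_{(i)})$ into the factorized $c_F$ and combining with the Jacobian factor $\prod_i \delta_{(i)}'(u_{(i)})$ from \reff{eq:c_to_ctilde} yields $s_{\mathbf{F}}(C_F)(u)=(1/d!)\,\ind_{L_{\delta^\mathbf{F}}}(u)\prod_{i=1}^d H_i(u_{(i)})$, where $H_i(t)=\tilde h_i(\delta_{(i)}(t))\,\delta_{(i)}'(t)$. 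Here three points enter: the two products merge into a single product over the order statistics $u_{(i)}$; on the a.e. set where $\prod_i \delta_{(i)}'(u_{(i)})>0$ one has $\delta_{(i)}^{-1}(v_i)=u_{(i)}$, so $\delta^{-1}(v)=u^{OS}$ and hence $\ind_{L_{\delta^\mathbf{F}}}(\delta^{-1}(v))=\ind_{L_{\delta^\mathbf{F}}}(u)$ by symmetry of $L_{\delta^\mathbf{F}}$; and, since the set $T^\mathbf{F}$ coincides up to a null set with $\{v;\ \delta_{(1)}^{-1}(v_1)\leq \hdots\leq \delta_{(d)}^{-1}(v_d)\}$ (established inside the proof of Lemma \ref{lem:dens_c_ctilde}), the ordering of the $u_{(i)}$ forces $\ind_{T^\mathbf{F}}(v)=1$ on that same positivity set, so this factor is harmlessly absorbed.

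Thus $S_{\mathbf{F}}(C_F)$ lies in $\cc^0_{\delta^\mathbf{F}}$, using $\cc^{sym}(\mathbf{F})\cap \cc^0=\cc^0_{\delta^\mathbf{F}}\cap \cc^{sym}$ from Proposition \ref{prop:char_cfsym}, and its density is exactly of the form $(1/d!)\ind_{L_{\delta^\mathbf{F}}}(u)\prod_i H_i(u_{(i)})$ considered in Proposition \ref{prop:c_delta_unique_product}. That Proposition then forces $S_{\mathbf{F}}(C_F)=C_{\delta^\mathbf{F}}$. Since $S_{\mathbf{F}}$ is a bijection by Lemma \ref{lem:one_to_one}, we obtain $C_F=S_{\mathbf{F}}^{-1}(C_{\delta^\mathbf{F}})=C_{\mathbf{F}}$, the copula of Proposition \ref{prop:c_F-density}. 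As $F$ and $F_{\mathbf{F}}$ share the one-dimensional marginals $\mathbf{F}$ and now the same copula $C_{\mathbf{F}}$, the injectivity of $F\mapsto C_F$ on $\cl_d^{OS}(\mathbf{F})$ (Sklar's theorem, cf. \reff{eq:def-FF}) gives $F=F_{\mathbf{F}}$ on $\R^d$.

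The main obstacle I anticipate is the bookkeeping in the second paragraph: showing that after composing with the $\delta_{(i)}$ the spurious indicator $\ind_{T^\mathbf{F}}$ disappears and the two order-statistics indicators collapse to a single $\ind_{L_{\delta^\mathbf{F}}}(u)$, valid only on the a.e. set where the Jacobian $\prod_i \delta_{(i)}'(u_{(i)})$ is positive. This step forces one to recombine the generalized-inverse identities \reff{eq:inv}--\reff{eq:inv2} with the relation $\delta^\mathbf{F}=\mathbf{F}\circ G^{-1}$, exactly as in the proofs of Lemmas \ref{lem:dens_c_ctilde} and \ref{lem:LD_LF}; everything else is a direct appeal to results already established.
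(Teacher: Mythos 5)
Your proof is correct and follows essentially the same route as the paper: transport the product form through the symmetrization, land on a density of the form $(1/d!)\ind_{L_{\delta^\mathbf{F}}}(u)\prod_i \bar h_i(u_{(i)})$, invoke Proposition \ref{prop:c_delta_unique_product} to identify $S_{\mathbf{F}}(C_F)=C_{\delta^\mathbf{F}}$, and conclude via the bijection and Sklar's theorem. The only (immaterial) difference is that the paper symmetrizes $F$ first and computes the copula of $F^{sym}$ in one step via $G^{-1}$, whereas you compute $C_F$ first and then apply \reff{eq:c_to_ctilde}; the indicator bookkeeping you flag is handled in both cases exactly as you describe, via Lemma \ref{lem:LD_LF} and the a.e.\ identity $\delta_{(i)}^{-1}\circ\delta_{(i)}(u_{(i)})=u_{(i)}$ off $Z_{\delta^\mathbf{F}}$.
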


  \begin{proof}
   Let $X=(X_1, \ldots, X_d)$ be an order statistic with cdf $F$, and $F^{sym}$ the cdf of
   $X_\Pi$ given by \reff{eq:Fsym}, with $\Pi$ uniform on $\cs_d$ and
   independent of $X$.
   Then the cdf $F^{sym}$ is also absolutely continuous, and its density $f^{sym}$ is given by :
   \begin{equation} \label{eq:tilde_f=prod_qi}
     f^{sym}(x)=\inv{d!} \prod_{i=1}^d h_i(x_{(i)})\ind_{L^\mathbf{F}}(x^{OS}),
   \end{equation}
where $x^{OS}$ is the ordered vector of $x$. The one-dimensional marginal cdf's of $X_\Pi$ are all equal to $G$ given by \reff{eq:def_G}.
Let $C\in\cc^{OS}(\mathbf{F}) \cap \cc^0$ denote the copula of $F$. Then according to \reff{eq:inversion_formula}, the copula $S_{\mathbf{F}}(C)$ of $X_\Pi$ is given by, for a.e. $u=(u_1, \ldots, u_d)\in I^d$:
\[
   S_{\mathbf{F}}(C)(u)=F^{sym}\left(G^{-1}(u)\right).
\]
Therefore its  density $s_{\mathbf{F}}(C)$ can be expressed as:
\begin{align*}
s_{\mathbf{F}}(C)(u) & = \frac{ f^{sym}(G^{-1}(u))}{\prod_{i=1}^d g\circ G^{-1}(u_i)} \prod_{i=1}^d \ind_{\{g \circ G^{-1}(u_{i})>0\}} \\
                                    & = \inv{d!} \ind_{L^\mathbf{F}}(G^{-1}(u^{OS})) \prod_{i=1}^d\frac{h_i\circ G^{-1}(u_{(i)})}{g\circ G^{-1}(u_{(i)})}\ind_{\{g \circ G^{-1}(u_{(i)})>0\}},
\end{align*}
where $g$ is the density of $G$. Notice that for $x=(x_1,\hdots,x_d) \in S$, we have a.e.:
\[
   \ind_{\{\prod_{i=1}^d h_i(x_i)>0\}} \leq \ind_{\{\prod_{i=1}^d f_i(x_i)>0\}} ,
\]
with $f_i$ the density of  $X_i$.
Therefore by Lemma \ref{lem:LD_LF} and since $G$ is continuous, we
have  that  $\prod_{i=1}^d  h_i \circ  G^{-1}(u_{(i)})  \,du_{1}\hdots
du_{d}$-a.e.:
\[
  \ind_{L^\mathbf{F}}(G^{-1}(u^{OS})) = \ind_{L_{\delta^\mathbf{F}}}(
\delta_ {(1)}^{-1}\circ \delta_{(1)}(u_{(1)}), \ldots,
\delta_ {(d)}^{-1}\circ \delta_{(d)}(u_{(d)})).
\]
By  Lemma
\ref{lem:delta_SFC}, $S_{\mathbf{F}}(C)$ belongs to
$\cc^0_{\delta^{\mathbf{F}}}$ and thus $s_{\mathbf{F}}(C)=0$ a.e. on
$Z_{\delta^\mathbf{F}}$ defined by \reff{eq:def-Z}. Then use
\reff{eq:IgJc} and \reff{eq:inv2} to get that $\delta_ {(i)}^{-1}\circ
\delta_{(i)}(u_{(i)})=u_{(i)}$ a.e. on $Z_{\delta^\mathbf{F}}^c$. This
gives:
\[
  \ind_{L^\mathbf{F}}(G^{-1}(u^{OS})) =
  \ind_{L_{\delta^\mathbf{F}}}(u^{OS})=
  \ind_{L_{\delta^\mathbf{F}}}(u)
\]
that      is      $s_{\mathbf{F}}(C)$      is      of      the      form
$s_{\mathbf{F}}(C)(u)=(1/d!)\ind_{L_{\delta^\mathbf{F}}}(u)\prod_{i=1}^d
\bar{h}_i(u_{(i)})$   for   some   measurable   non-negative   functions
$(\bar{h}_i,   1\leq   i\leq   d)$.    Then,   thanks   to   Proposition
\ref{prop:c_delta_unique_product},          we         get          that
$S_{\mathbf{F}}(C)=C_{\delta^{\mathbf{F}}}$.    Then,  use   Proposition
\ref{prop:c_F-density} to get that $F=F_{\mathbf{F}}$.
\end{proof}

\section{Proofs} \label{sec:proof}

   \subsection{Preliminary notations for  the optimization problem} \label{sec:appendix0}
Recall notations from Sections \ref{sec:not-def} and
\ref{sec:connection}. In particular if $u=(u_1, \ldots, u_d) \in I ^d$
then $u^{OS}=(u_{(1)}, \ldots,
u_{(d)})$ denote the ordered vector of $u$.

   In order to apply the technique established in
   \cite{borwein1994entropy}, we introduce the
 linear functional $\ca=(\ca_i, 1 \leq i \leq 2d) : L^1 ( I^d) \rightarrow L^1(I)^{2d}$ as, for
 $f \in L^1 ( I^d)$ and $r\in I$ :
 \begin{equation*}
     \ca_i(f)(r)  = \int_{I^d} f(u) \ind_{\{u_i \leq r\}} \, du
\quad \text{and}\quad
  \ca_{d+i}(f)(r)  = \int_{I^d} f(u) \ind_{\{u_{(i)} \leq r\}} \, du
  \quad\text{for}\quad  1 \leq i \leq d .
 \end{equation*}
Let $\delta=(\delta_{(i)}, 1\leq i\leq d)\in \cd^0$ be a multidiagonal,
see Definition \ref{defi:diag}.
We set  $b^\delta = (b_i, 1 \leq i \leq 2d) $ given by
$b_i=\text{id}_I$ the identity function on $I$
  and $b_{d+i} = \delta_{(i)}$, for
 $1 \leq i \leq d$. If, for $c\in L^1(I^d)$, we have  $\ca_i(c)=b_i$, $1
 \leq i \leq d$ and $c \geq 0$ a.e., then we deduce that $c$ is the density of an absolutely
 continuous copula, say $C$. If we further have
 $\ca_{d+i}(c)=b_{d+i}$, for $1 \leq i \leq d$, then $\delta$ is the
 multidiagonal of $C$.

\begin{lem} \label{lem:unif}
Let $\delta\in \cd^0$ and $b^\delta = (b_i, 1 \leq i \leq 2d) $. If $c\in
L^1(I^d)$ is non-negative,
symmetric and satisfies $\ca_{d+i}(c)=b_{d+i}$ for $1 \leq i \leq d$,
then $c$ is the density of a copula with multidiagonal $\delta$.
\end{lem}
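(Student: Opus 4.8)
The plan is to reduce the claim to the observation made just before the lemma: a non-negative $c\in L^1(I^d)$ satisfying $\ca_i(c)=b_i=\mathrm{id}_I$ for $1\le i\le d$ is the density of an absolutely continuous copula $C$, and if moreover $\ca_{d+i}(c)=b_{d+i}=\delta_{(i)}$ for $1\le i\le d$, then $\delta$ is the multidiagonal of $C$. Since the hypotheses already supply $\ca_{d+i}(c)=\delta_{(i)}$, the only thing left to verify is that the $d$ coordinate marginals of $c$ are uniform, that is $\ca_i(c)=\mathrm{id}_I$ for $1\le i\le d$; the normalization $\int_{I^d}c=1$ will then follow by evaluating at $r=1$.

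First I would exploit the symmetry of $c$. Since $c(u_\pi)=c(u)$ for every $\pi\in\cs_d$, the invariance of Lebesgue measure under coordinate permutations gives $\ca_i(c)=\ca_1(c)$ for all $1\le i\le d$; denote this common function by $m$. The key elementary identity is the permutation-invariance of the counting function: for every $r\in I$ and every $u=(u_1,\ldots,u_d)\in I^d$,
\[
\sum_{i=1}^d \ind_{\{u_i\le r\}}=\#\{i;\ u_i\le r\}=\#\{i;\ u_{(i)}\le r\}=\sum_{i=1}^d\ind_{\{u_{(i)}\le r\}}.
\]
Integrating this identity against $c$ and using the definitions of $\ca_i$ and $\ca_{d+i}$ yields, for all $r\in I$,
\[
\sum_{i=1}^d \ca_i(c)(r)=\sum_{i=1}^d \ca_{d+i}(c)(r)=\sum_{i=1}^d\delta_{(i)}(r),
\]
where the second equality uses the hypothesis $\ca_{d+i}(c)=\delta_{(i)}$.

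To conclude, I would combine this with the defining property \reff{eq:sum_delta} of a multidiagonal, namely $\sum_{i=1}^d\delta_{(i)}(r)=dr$ for $r\in I$. Together with $\ca_i(c)=m$ for every $i$, this forces $d\,m(r)=dr$, hence $m(r)=r$, so $\ca_i(c)=\mathrm{id}_I=b_i$ for $1\le i\le d$. Since $c\ge 0$ and $\ca_i(c)=b_i$ for $1\le i\le d$, the observation preceding the lemma shows that $c$ is the density of an absolutely continuous copula, and since moreover $\ca_{d+i}(c)=b_{d+i}=\delta_{(i)}$, that copula has multidiagonal $\delta$. There is no genuine obstacle here: the only point requiring care is the interchange of summation and integration used to pass from the pointwise counting identity to the equality of marginal sums, which is justified by Tonelli's theorem since $c\ge 0$.
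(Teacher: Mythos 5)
Your proof is correct, and its skeleton matches the paper's: both use the symmetry of $c$ to reduce the problem to verifying a single coordinate marginal, and both close the argument with the identity $\sum_{i=1}^d\delta_{(i)}(r)=dr$ from \reff{eq:sum_delta}. The difference is in how that marginal is computed. The paper argues probabilistically: it views $c$ as the density of an exchangeable vector $V$ and decomposes $\P(V_1\le r)=\sum_{i=1}^d\P(V_{(i)}\le r\mid V_1=V_{(i)})\,\P(V_1=V_{(i)})=\sum_{i=1}^d\delta_{(i)}(r)/d$, a step that implicitly uses that the coordinates of $V$ are a.s.\ distinct (true here, since $c$ is a density) so that the events $\{V_1=V_{(i)}\}$ partition the space, and that exchangeability makes each rank equally likely for $V_1$ given $V^{OS}$. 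You instead integrate the deterministic counting identity $\sum_{i=1}^d\ind_{\{u_i\le r\}}=\sum_{i=1}^d\ind_{\{u_{(i)}\le r\}}$ against $c$, which yields $d\,\ca_1(c)(r)=\sum_{i=1}^d\ca_{d+i}(c)(r)=dr$ directly. This is slightly more elementary and robust, sidestepping the conditioning-on-ranks step and any tie-breaking subtlety at no cost; both arguments then conclude via the observation preceding the lemma in the same way.
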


\begin{proof}
   The symmetry and  non-negativity of $c$ as well as the condition
   $\ca_{d+1}(c)(1)=\int_{I^d} c= b_{d+1}(1)=1$
ensures that $c$ is a density function of an exchangeable random vector
$V=(V_1, \hdots, V_d)$ on $I^d$. Recall  $V^{OS}=(V_{(1)}, \hdots,
V_{(d)})$ denotes the corresponding order statistics. By symmetry, the
lemma is proved as soon as we check that  $\ca_{1}(c)=b_{1}$. We have
for $r\in I$:
\[
   \ca_{1}(c)(r)  = \P (V_1 \leq r )
                  = \sum_{i=1}^d \P (V_{(i)} \leq r | V_1 = V_{(i)}) \P(V_1 = V_{(i)})
                  = \sum_{i=1}^d \delta_{(i)}(r) \inv{d}
                  = r,
\]
where we used the exchangeability of $V$ and the definition of
$\delta_{(i)}$ for the third equality, and \reff{eq:sum_delta} for the
last. This gives  $\ca_{1}(c)=b_{1}$.
\end{proof}

\subsection{Proof of Proposition \ref{prop:c_delta}}  \label{sec:appendixA}
Let $\delta\in \cd^0$.
Lemma \ref{lem:delta'_Psi} implies that $\delta_{(i)}((\Psi_i^\delta)^c)$ has zero Lebesgue measure
for all $2 \leq i \leq d$ with $\Psi_i^\delta$ given by \reff{eq:def_Psi_i_F}. By construction, the function $c_\delta$ defined by \reff{eq:c_delta}
is non-negative, symmetric  and well defined a.e. on $I^d$.
Recall the notation $(g_i^{(j)}, d_i^{(j)})$ used in \reff{eq:def_alpha_beta}. We define the functions $B_i$ on $I$ as, for $1 \leq i \leq d+1$, $t \in (g_i^{(j)}, d_i^{(j)})$ (with the conventions $\Psi_{1}^\delta=(0,d_1),\Psi_{d+1}^\delta=(g_{d+1},1)$):
\begin{equation} \label{eq:Bd}
       B_{d+1}(t) =1
\quad \text{and}\quad
       B_i(t) = \int_t^{d_i^{(j)}} a_i(s) B_{i+1}(s) \, ds\quad \text{for}\quad  1
       \leq i \leq d.
\end{equation}

For $t \in (\Psi_i^\delta)^c$, we set $B_i(t)=0$.
Recall $K_i$ defined in \reff{eq:F} for $1\leq i\leq d+1$ with the
convention $K_{d+1}=0$. We show that $B_i$ can be simply expressed by $K_i$
on $ \Psi^\delta_i$.

\begin{lem} \label{lem:Bi}
 Let $1 \leq i \leq d+1$ and $t \in \Psi^\delta_i$. Then we have:
\begin{equation} \label{eq:Bi_exp}
       B_i(t)= \exp\left(-K_i(t) \right) .
\end{equation}
\end{lem}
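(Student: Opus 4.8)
The plan is to argue by backward induction on $i$, from $i=d+1$ down to $i=1$, exploiting that once the inductive hypothesis is substituted, the integrand defining $B_i$ in \reff{eq:Bd} becomes an exact derivative. The base case $i=d+1$ is immediate: by \reff{eq:Bd} we have $B_{d+1}\equiv 1$ on $I$, which equals $\exp(-K_{d+1})=\exp(0)$ thanks to the convention $K_{d+1}=0$. Note that it suffices to establish the identity on each interval $(g_i^{(j)},d_i^{(j)})$ separately, since these cover $\Psi_i^\delta$.

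For the inductive step, fix $1\leq i\leq d$ and assume $B_{i+1}=\exp(-K_{i+1})$ on $\Psi_{i+1}^\delta$ (for $i=d$ one simply uses $B_{d+1}\equiv 1$). Take $t\in(g_i^{(j)},d_i^{(j)})\subset \Psi_i^\delta$. First I would simplify the integrand $a_i(s)B_{i+1}(s)$ of \reff{eq:Bd} on the range $(t,d_i^{(j)})$. Since this range lies in $\Psi_i^\delta$, the factor $\ind_{\Psi_i^\delta\cap\Psi_{i+1}^\delta}$ in the definition \reff{eq:ai} of $a_i$ reduces to $\ind_{\Psi_{i+1}^\delta}$. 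On $\Psi_{i+1}^\delta$ the inductive hypothesis gives $B_{i+1}(s)=\exp(-K_{i+1}(s))$, so the $K_{i+1}$ terms cancel and $a_i(s)B_{i+1}(s)=K_i'(s)\exp(-K_i(s))$; off $\Psi_{i+1}^\delta$ both $a_i(s)$ and $B_{i+1}(s)$ vanish. Hence, on $\Psi_i^\delta$,
\[
a_i(s)B_{i+1}(s)=K_i'(s)\exp(-K_i(s))\,\ind_{\Psi_{i+1}^\delta}(s).
\]

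The crux of the argument is to discard this last indicator. Because $K_i'(s)=\delta_{(i)}'(s)/(\delta_{(i-1)}(s)-\delta_{(i)}(s))$ on $\Psi_i^\delta$, it is enough to show $\delta_{(i)}'\,\ind_{(\Psi_{i+1}^\delta)^c}=0$ a.e. For $1\leq i\leq d-1$ this is precisely the first identity of Lemma \ref{lem:delta'_Psi} applied with index $i+1$; for $i=d$ the set $\Psi_{d+1}^\delta=(g_{d+1},1)$ falls outside that lemma, but there $\delta_{(d)}=0$ on $(0,g_{d+1})$ by the very definition of $g_{d+1}$, so $\delta_{(d)}'=0$ a.e. off $\Psi_{d+1}^\delta$ as well. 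I expect this separation into the generic case and the boundary case $i=d$ to be the only genuine subtlety. Consequently $a_i(s)B_{i+1}(s)=-\frac{d}{ds}\exp(-K_i(s))$ for a.e. $s\in(t,d_i^{(j)})$, and \reff{eq:Bd} together with the fundamental theorem of calculus on $[t,u]$ for $u<d_i^{(j)}$ yields
\[
B_i(t)=\exp(-K_i(t))-\lim_{u\nearrow d_i^{(j)}}\exp(-K_i(u)).
\]
Finally I would invoke Remark \ref{rem:d^i_t-}, which gives $\lim_{u\nearrow d_i^{(j)}}K_i(u)=+\infty$, so the boundary term vanishes and $B_i(t)=\exp(-K_i(t))$, closing the induction.
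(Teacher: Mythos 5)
Your proof is correct and follows essentially the same route as the paper's: backward induction from $i=d+1$, substitution of the definition of $a_i$, cancellation of the $K_{i+1}$ terms via the inductive hypothesis, removal of the indicator $\ind_{\Psi_i^\delta\cap\Psi_{i+1}^\delta}$ through Lemma \ref{lem:delta'_Psi}, and evaluation of the boundary term at $d_i^{(j)}$ via Remark \ref{rem:d^i_t-}. Your explicit treatment of the case $i=d$ (where $\Psi_{d+1}^\delta$ is not covered by Lemma \ref{lem:delta'_Psi} but $\delta_{(d)}'=0$ a.e.\ on $(0,g_{d+1})$ by definition of $g_{d+1}$) is a small point the paper passes over silently, and it is handled correctly.
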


\begin{proof}
 For $i=d+1$, the result is trivial. We proceed by induction on $i$. We suppose that $B_{i+1}(t)=\exp\left(-K_{i+1}(t) \right)$ holds for
  some $1\leq i \leq d$, and all $t \in \psi^\delta_{i+1}$. We have for $t \in (g_i^{(j)}, d_i^{(j)})$:
 \begin{align*}
   B_i(t) & = \int_t^{d_i^{(j)}} a_i(s) B_{i+1}(s) \, ds \\
          & = \int_t^{d_i^{(j)}}  K'_i(s) \expp{K_{i+1}(s)-K_i(s)} \ind_{ \Psi_i^\delta \cap \Psi_{i+1}^\delta}(s) B_{i+1}(s) \, ds \\
          & = \int_t^{d_i^{(j)}}  K'_i(s) \expp{-K_i(s)} \ind_{ \Psi_i^\delta \cap \Psi_{i+1}^\delta}(s) \, ds. \\
          & = \int_t^{d_i^{(j)}}  K'_i(s) \expp{-K_i(s)} \, ds \\
          & = \exp(-K_i(t)),
 \end{align*}
 where we  used the definition  of $a_i$  given by \reff{eq:ai}  for the
 second  equality,  the induction  hypothesis  for  the third  equality,
 $(t,d_i^{(j)})  \subset \Psi_i^\delta$  and Lemma  \ref{lem:delta'_Psi}
 for the  fourth equality, and  finally Remark \ref{rem:d^i_t-}  for the
 fifth equality. This ends the induction.
\end{proof}

Similarly, we define the  functions $E_i$ on $I$ as, for $0\leq i \leq d$ as for $t \in (g_{i+1}^{(j)}, d_{i+1}^{(j)})$:
\begin{equation} \label{eq:Ed}
       E_0(t) =1,
\quad \text{and}\quad
       E_i(t) = \int_{g_{i+1}^{(j)}}^t a_i(s) E_{i-1}(s) \, ds\quad \text{for}\quad  1
       \leq i \leq d.
\end{equation}
 For $t \in  (\Psi_{i+1}^\delta)^c$ we set $E_i(t)=0$.
 The next Lemma gives a simple formula for $E_i$ on  $\Psi_{i+1}^\delta$.
 \begin{lem} \label{lem:Ei}
  Let $0 \leq i \leq d$ and $t \in \Psi^\delta_{i+1}$. Then we have:
  \begin{equation} \label{eq:Ci_exp}
       E_i(t)= \left(\delta_{(i)}(t)-\delta_{(i+1)}(t)\right) \exp\left(K_{i+1}(t) \right) .
\end{equation}

 \end{lem}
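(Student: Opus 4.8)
The plan is to prove \reff{eq:Ci_exp} by induction on $i$, in exact parallel with the proof of Lemma \ref{lem:Bi}. For the base case $i=0$ I use the conventions $\delta_{(0)}=1$, $E_0\equiv 1$ together with the explicit formula \reff{eq:K1}: for $t\in\Psi^\delta_1$,
\[
(\delta_{(0)}(t)-\delta_{(1)}(t))\expp{K_1(t)}=(1-\delta_{(1)}(t))\,\frac{1}{1-\delta_{(1)}(t)}=1=E_0(t).
\]

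For the inductive step, assume \reff{eq:Ci_exp} holds for $E_{i-1}$ on $\Psi^\delta_i$ for some $1\le i\le d$, fix $j\in J_{i+1}$ and $t\in(g_{i+1}^{(j)},d_{i+1}^{(j)})$. Inserting the definition \reff{eq:Ed} of $E_i$ and the definition \reff{eq:ai} of $a_i$, and substituting the induction hypothesis for $E_{i-1}(s)$ on the support of $a_i$ (where $s\in\Psi^\delta_i$), the two exponential factors $\expp{-K_i(s)}$ and $\expp{K_i(s)}$ cancel; using $K_i'(s)=\delta_{(i)}'(s)/(\delta_{(i-1)}(s)-\delta_{(i)}(s))$ from \reff{eq:F} the factor $\delta_{(i-1)}(s)-\delta_{(i)}(s)$ cancels as well, leaving
\[
a_i(s)E_{i-1}(s)=\delta_{(i)}'(s)\,\expp{K_{i+1}(s)}\,\ind_{\Psi_i^\delta\cap\Psi_{i+1}^\delta}(s).
\]
Since the integration in \reff{eq:Ed} runs over $(g_{i+1}^{(j)},t)\subset\Psi^\delta_{i+1}$, the factor $\ind_{\Psi_{i+1}^\delta}$ equals $1$ there, and Lemma \ref{lem:delta'_Psi} (for $i\ge 2$), respectively the constancy of $\delta_{(1)}$ on $(\Psi_1^\delta)^c$ (for $i=1$), gives $\delta_{(i)}'\ind_{(\Psi_i^\delta)^c}=0$ a.e., so the remaining indicator may be dropped. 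This reduces the claim to
\[
E_i(t)=\int_{g_{i+1}^{(j)}}^t \delta_{(i)}'(s)\,\expp{K_{i+1}(s)}\,ds.
\]

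It remains to identify this integral. Setting $\Phi(s)=(\delta_{(i)}(s)-\delta_{(i+1)}(s))\expp{K_{i+1}(s)}$ and using $K_{i+1}'(s)(\delta_{(i)}(s)-\delta_{(i+1)}(s))=\delta_{(i+1)}'(s)$ (again \reff{eq:F}, with index $i+1$), a direct differentiation gives $\Phi'(s)=\delta_{(i)}'(s)\expp{K_{i+1}(s)}$ a.e. on $(g_{i+1}^{(j)},d_{i+1}^{(j)})$, so $\Phi$ is an antiderivative of the integrand and $E_i-\Phi$ is constant on this interval. Since $E_i(s)\to 0$ as $s\searrow g_{i+1}^{(j)}$ trivially from the definition, the identity \reff{eq:Ci_exp} follows once one shows $\Phi(s)\to 0$ as $s\searrow g_{i+1}^{(j)}$.

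This boundary estimate, dual to Remark \ref{rem:d^i_t-}, is the main obstacle: near the lower endpoint the factor $\delta_{(i)}-\delta_{(i+1)}$ tends to $0$ while $\expp{K_{i+1}}$ may blow up, so the two effects must be weighed against each other. I would control it by bounding $\delta_{(i)}(s)\le\delta_{(i)}(m_{i+1}^{(j)})$ for $s\le m_{i+1}^{(j)}$, which yields
\[
K_{i+1}(s)\le\log\frac{\delta_{(i)}(m_{i+1}^{(j)})-\delta_{(i+1)}(m_{i+1}^{(j)})}{\delta_{(i)}(m_{i+1}^{(j)})-\delta_{(i+1)}(s)},
\]
and hence $\Phi(s)\le(\delta_{(i)}(s)-\delta_{(i+1)}(s))\,\big(\delta_{(i)}(m_{i+1}^{(j)})-\delta_{(i+1)}(m_{i+1}^{(j)})\big)\big/\big(\delta_{(i)}(m_{i+1}^{(j)})-\delta_{(i+1)}(s)\big)$. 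As $s\searrow g_{i+1}^{(j)}$ the numerator tends to $0$ while the denominator tends to $\delta_{(i)}(m_{i+1}^{(j)})-\delta_{(i)}(g_{i+1}^{(j)})$, which is strictly positive, since otherwise $\delta_{(i)}-\delta_{(i+1)}$ would vanish throughout $(g_{i+1}^{(j)},d_{i+1}^{(j)})$, contradicting $(g_{i+1}^{(j)},d_{i+1}^{(j)})\subset\Psi^\delta_{i+1}$. Thus $\Phi(s)\to 0$, the constant is zero, and the induction closes.
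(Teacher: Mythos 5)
Your proof is correct and follows essentially the same route as the paper's: the same induction on $i$, the same reduction of $a_iE_{i-1}$ to $\delta_{(i)}'\expp{K_{i+1}}$ on $\Psi_i^\delta\cap\Psi_{i+1}^\delta$, and the same recognition of $\bigl(\delta_{(i)}-\delta_{(i+1)}\bigr)\expp{K_{i+1}}$ as the relevant antiderivative. The only (cosmetic) difference is in how the vanishing of the boundary term at $g_{i+1}^{(j)}$ is established: the paper shows $\int_{m_{i+1}^{(j)}}^{s}\delta_{(i)}'/(\delta_{(i)}-\delta_{(i+1)})\to-\infty$ by bounding $\delta_{(i+1)}$ from below by its value at $g_{i+1}^{(j)}$, whereas you bound $K_{i+1}(s)$ from above by freezing $\delta_{(i)}$ at $m_{i+1}^{(j)}$ --- dual estimates that accomplish the same thing.
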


\begin{proof}
 For $i=0$ the result is clear thanks to the convention $\delta_{(0)}=1$ and \reff{eq:K1}. We proceed by induction on $i$. We suppose that $E_{i-1}(t)=(\delta_{(i-1)}(t)-\delta_{(i)}(t)) \exp(K_{i}(t))$ holds for
  some $1 \leq i \leq d$, and all $t \in \psi^\delta_{i}$. Let us denote $h_i=\delta_{(i-1)}-\delta_{(i)}$. Before computing $E_i(t)$ for $t \in (g_{i+1}^{(j)}, d_{i+1}^{(j)})$,
 we give an alternative expression for $\exp(K_i(s))$ for  $s \in (g_{i+1}^{(j)}, t)$:
 \begin{align} \label{eq:exp_Ki}
 \nonumber  \expp{K_{i+1}(s)} & = \exp\left( - \int_{m_{i+1}^{(j)}}^t \frac{h'_{i+1}(u)}{h_{i+1}(u)}+ \int_{s}^t \frac{h'_{i+1}(u)}{h_{i+1}(u)}+\int_{m_{i+1}^{(j)}}^s \frac{\delta'_{(i)}(u)}{h_{i+1}(u)}  \, du\right) \\
                    & = \frac{h_{i+1}(t)}{h_{i+1}(s)} \exp\left( - \int_{m_{i+1}^{(j)}}^t \frac{h'_{i+1}(u)}{h_{i+1}(u)}+\int_{m_{i+1}^{(j)}}^s \frac{\delta'_{(i)}(u)}{h_{i+1}(u)}  \, du\right).
 \end{align}
 Then we have for $t \in (g_{i+1}^{(j)}, d_{i+1}^{(j)})$:
 \begin{align*}
    E_{i}(t) & = \int_{g_{i+1}^{(j)}}^t a_i(s) E_{i-1}(s) \, ds \\
             & = \int_{g_{i+1}^{(j)}}^t K'_i(s) \expp{K_{i+1}(s)-K_i(s)} \ind_{ \Psi_i^\delta \cap \Psi_{i+1}^\delta}(s) E_{i-1}(s) \, ds \\
             & = \int_{g_{i+1}^{(j)}}^t K'_i(s) \expp{K_{i+1}(s)} h_{i}(s)\ind_{ \Psi_i^\delta \cap \Psi_{i+1}^\delta}(s) \, ds \\
             & = \int_{g_{i+1}^{(j)}}^t \delta'_{(i)}(s) \expp{K_{i+1}(s)} \ind_{ \Psi_i^\delta \cap \Psi_{i+1}^\delta}(s) \, ds \\
             & = h_{i+1}(t) \exp\left( - \int_{m_{i+1}^{(j)}}^t \frac{h'_{i+1}(u)}{h_{i+1}(u)} \, du \right)  \int_{g_{i+1}^{(j)}}^t \left(\frac{\delta'_{(i)}(s)}{h_{i+1}(s)} \exp\left(\int_{m_{i+1}^{(j)}}^s \frac{\delta'_{(i)}(u)}{h_{i+1}(u)}  \, du\right) \right) \, ds  \\
             & = h_{i+1}(t) \exp\left( - \int_{m_{i+1}^{(j)}}^t \frac{h'_{i+1}(u)-\delta'_{(i)}(u)}{h_{i+1}(u)}  \, du\right)\\
             & = h_{i+1}(t) \exp(K_{i+1}(t)),
 \end{align*}
where we used the definition of $a_i$ given by \reff{eq:ai} for the
second equality, the induction hypothesis for the third equality,
Lemma \ref{lem:delta'_Psi} and \reff{eq:exp_Ki} for the fifth equality, and for the seventh equality we use that, for $t \in (g_{i+1}^{(j)}, m_{i+1}^{(j)})$ (similarly to Remark \ref{rem:d^i_t-}):
 \begin{equation*}
    \int_{m_{i+1}^{(j)}}^t \frac{\delta'_{(i)}(s)}{h_{i+1}(s)}  \, ds \leq  \int_{m_{i+1}^{(j)}}^t \frac{\delta'_{(i)}(s)}{\delta_{(i)}(s) - \delta_{(i+1)}(g_{i+1}^{(j)}) }  \, ds = \log\left(\frac{\delta_{(i)}(t)-\delta_{(i+1)}(g_{i+1}^{(j)})}{\delta_{(i)}(m_{i+1}^{(j)})-\delta_{(i+1)}(g_{i+1}^{(j)})}\right),
 \end{equation*}
giving $\lim_{t \searrow g_{i+1}^{(j)}} \int_{m_{i+1}^{(j)}}^t \frac{\delta'_{(i)}(s)}{h_{i+1}(s)}  \, ds = -\infty$.

\end{proof}

The following Lemma justifies the introduction of the functions $B_i, E_i$.

\begin{lem} \label{lem:B_iE_i}
  We have with $u_{(0)}=0$ for $1 \leq i \leq d$, $t \in \Psi_i^\delta$:
  \begin{equation} \label{eq:B_iE_i}
     \int_{I^d} c_\delta(u) \ind_{\{u_{(i-1)} \leq t \leq u_{(i)}\}} \, du = B_i(t) E_{i-1}(t).
  \end{equation}
\end{lem}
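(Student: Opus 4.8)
The plan is to reduce the integral over $I^d$ to one over the ordered simplex $\triangle$, split it at the level $t$ into a ``left'' block and a ``right'' block of coordinates, and then recognize these two blocks as exactly the iterated integrals defining $E_{i-1}(t)$ and $B_i(t)$. First I would exploit that both $c_\delta$ and the indicator $\ind_{\{u_{(i-1)}\leq t\leq u_{(i)}\}}$ are symmetric functions of $u$, so the integral over $I^d$ equals $d!$ times the same integral over $\triangle=\{u_1\leq\cdots\leq u_d\}$. On $\triangle$ one has $u_{(k)}=u_k$, and by \reff{eq:c_delta} $c_\delta(u)=\frac{1}{d!}\ind_{L_\delta}(u)\prod_{k=1}^d a_k(u_k)$; the factor $d!$ cancels, so with the convention $u_0=0$ the left-hand side of \reff{eq:B_iE_i} becomes
\[
\int_{\triangle}\ind_{L_\delta}(u)\Big(\prod_{k=1}^d a_k(u_k)\Big)\ind_{\{u_{i-1}\leq t\leq u_i\}}\,du.
\]

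Next I would factor this integral at $t$. On $\triangle\cap\{u_{i-1}\leq t\leq u_i\}$ the coordinates split into the block $(u_1,\ldots,u_{i-1})$, all $\leq t$, and the block $(u_i,\ldots,u_d)$, all $\geq t$. By \reff{eq:def_LD}, $\ind_{L_\delta}(u)=\prod_{k=2}^d\ind_{\{(u_{k-1},u_k)\subset\Psi_k^\delta\}}$; the factors with $k\leq i-1$ involve only the left block, those with $k\geq i+1$ only the right block, and the single straddling factor is $\ind_{\{(u_{i-1},u_i)\subset\Psi_i^\delta\}}$ (vacuous when $i=1$, where the left block is empty). The key observation is that, since $t\in\Psi_i^\delta$ and $\Psi_i^\delta$ is open, the connected interval $(u_{i-1},u_i)$ containing $t$ must lie inside the single component $(g_i^{(j)},d_i^{(j)})$ of $\Psi_i^\delta$ containing $t$ (notation of \reff{eq:def_alpha_beta}); equivalently $g_i^{(j)}\leq u_{i-1}$ and $u_i\leq d_i^{(j)}$. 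This condition factorizes between the two blocks, so the integral equals a product $E\cdot B$, where $B$ is the integral over $\{t\leq u_i\leq\cdots\leq u_d\}$ with $u_i\leq d_i^{(j)}$ and weight $\prod_{k\geq i}a_k(u_k)$, and $E$ the analogous integral over $\{u_1\leq\cdots\leq u_{i-1}\leq t\}$ with $g_i^{(j)}\leq u_{i-1}$ and weight $\prod_{k\leq i-1}a_k(u_k)$.

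Finally I would identify $B=B_i(t)$ and $E=E_{i-1}(t)$ by unfolding the recursions \reff{eq:Bd} and \reff{eq:Ed}. For $B$ I would argue by descending induction on $k$ from $d$ down to $i$ that integrating out $u_k,\ldots,u_d$ produces $B_k(u_{k-1})$, with $u_{i-1}$ read as $t$ at the top level: the support factor $\ind_{\Psi_k^\delta\cap\Psi_{k+1}^\delta}$ inside $a_k$ (see \reff{eq:ai}) forces $u_k\in\Psi_{k+1}^\delta$, so $B_{k+1}(u_k)$ is the genuine value given by \reff{eq:Bd}, while the constraint $(u_{k-1},u_k)\subset\Psi_k^\delta$ is exactly the upper integration limit, namely the right endpoint of the component of $\Psi_k^\delta$ containing $u_{k-1}$; the recursion terminates at $B_{d+1}=1$. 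Symmetrically, an ascending induction identifies $E$ with $E_{i-1}(t)$, the component lower bounds encoding the constraints $(u_{k-1},u_k)\subset\Psi_k^\delta$ and $E_0=1$ serving as the base case. Combining the two gives that the left-hand side of \reff{eq:B_iE_i} equals $E_{i-1}(t)B_i(t)$.

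I expect the bookkeeping in this last step to be the main obstacle: one must check carefully that the ``subset of $\Psi_k^\delta$'' constraints coming from $L_\delta$ translate precisely into the component-endpoint integration limits appearing in the definitions of $B_i$ and $E_{i-1}$, and that the support factors in the $a_k$ keep every intermediate $B_{k+1}(u_k)$ and $E_{k-1}(u_k)$ evaluated inside the set where its recursive formula is valid. The conceptual heart, which makes all of this go through, is simply that a connected interval contained in the open set $\Psi_k^\delta$ lies in a single component, turning each pairwise ``subset'' constraint into one-sided endpoint bounds that split cleanly between the two blocks.
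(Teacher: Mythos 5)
Your argument is correct and is essentially the paper's own proof run in the opposite direction: the paper unfolds the recursions \reff{eq:Bd} and \reff{eq:Ed} into iterated integrals, multiplies $B_i(t)$ by $E_{i-1}(t)$, and recognizes the product as $d!\int_\triangle c_\delta(u)\ind_{\{u_{i-1}\le t\le u_i\}}\,du$ via symmetry, whereas you start from the left-hand side, reduce to the simplex by symmetry, split at $t$, and identify the two blocks with the same unfolded integrals. The key points --- the $d!$ symmetry reduction, the splitting of the straddling constraint $(u_{i-1},u_i)\subset\Psi_i^\delta$ into one-sided component-endpoint bounds, and the translation of those bounds into the integration limits of \reff{eq:Bd} and \reff{eq:Ed} --- are exactly the ones the paper relies on.
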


\begin{proof}
  The definition \reff{eq:Bd} of $B_i$ for $1 \leq i \leq d$ gives that for $t\in I$:
  \[
    B_i (t) =  \int  a_i(r_i) a_{i+1}(r_{i+1})\hdots a_d(r_d) \ind_{\{t
      \leq r_i \leq r_{i+1} \leq \hdots \leq r_d \leq 1\}}
    \ind_{\{[t,r_i) \subset \Psi_{i}^\delta \}}\prod_{j=i}^{d-1}
    \ind_{\{(r_j,r_{j+1}) \subset \Psi_{j+1}^\delta \}} \, dr,
  \]
  with $r=(r_i,r_{i+1}, \hdots , r_d) \in I^{d-i+1}$. Similarly, we have
  for $1\leq i \leq d$, $t \in I$ that $E_{i-1} (t)$ is equal to:
  \[
 \int  a_1(q_1)a_{2}(q_{2})\hdots a_{i-1}(q_{i-1})
    \ind_{\{ 0 \leq q_1 \leq q_{2} \leq \hdots \leq q_{i-1} \leq t\}}
    \ind_{\{(q_{i-1},t] \subset \Psi_{i}^\delta \}}\prod_{j=1}^{i-2}
    \ind_{\{(q_j,q_{j+1}) \subset \Psi_{j+1}^\delta \}} \, dq,
  \]
  with $q=(q_1,q_2, \hdots, q_{i-1}) \in I^{i-1}$. Multiplying $B_i(t)$
  with  $E_{i-1}(t)$ gives:
  \begin{align*}
   B_i (t) E_{i-1} (t) & = \int_\triangle \prod_{j=1}^d a_j(u_j) \ind_{\{u_{i-1} \leq t \leq u_{i}\}} \prod_{j=1}^{d-1} \ind_{\{ (u_j,u_{j+1}) \subset \Psi_{j+1}^\delta\}} \, du \\
                       & = \int_\triangle \prod_{j=1}^d a_j(u_j) \ind_{\{u_{i-1} \leq t \leq u_{i}\}} \ind_{L^\delta}(u) \, du \\
                       & = d! \int_\triangle c_\delta(u) \ind_{\{u_{i-1} \leq t \leq u_{i}\}} \, du \\
                       & = \int_{I^d} c_\delta(u) \ind_{\{u_{(i-1)} \leq t \leq u_{(i)}\}} \, du,
  \end{align*}
  where we used the symmetry of $c_\delta$ for the fourth equality.
\end{proof}

Lemma \ref{lem:B_iE_i} with $i=1$ ensures that $\int_{I^d} c_\delta(u) \, du = \lim_{t \searrow 0} B_1(t)E_0(t)=1$, that is $c_\delta$ a probability density function on $I^d$.
Now we compute $\ca_{d+1}(c_\delta)$. We have, for  $t \in \Psi_i^\delta$:
\begin{equation*}
      \ca_{d+1}(c_\delta)(t) = \int_{I^d} c_\delta(u) \ind_{\{u_{(1)} \leq t\}} \, du = 1 - \int_{I^d} c_\delta(u) \ind_{\{u_{(1)} \geq t\}} \, du = 1- B_1(t)E_0(t) = \delta_{(1)}(t),
\end{equation*}
    where we used Lemma \ref{lem:B_iE_i} with $i=1$ for the third equality, then \reff{eq:Bi_exp}  and \reff{eq:K1}  for the fourth equality. By continuity this gives $\ca_{d+1}(c_\delta)=\delta_{(1)}$ on $I$. For $2 \leq i \leq d$, we have by induction for $t \in \Psi_i^\delta$:
\begin{align*}
     \ca_{d+i}(c_\delta)(t) & = \int_{I^d} c_\delta(u) \ind_{\{u_{(i)} \leq t\}} \, du \\
			 & = \int_{I^d} c_\delta(u) \ind_{\{u_{(i-1)} \leq t\}} \, du - \int_{I^d} c_\delta(u) \ind_{\{u_{(i-1)} \leq t \leq u_{(i)}\}} \, du \\
			 & =  \ca_{d+i-1}(c_\delta)(t) - B_i(t)E_{i-1}(t) \\
			 & =\delta_{(i-1)}(t)-\left(\delta_{(i-1)}(t)-\delta_{(i)}(t)\right) \\
			 & =\delta_{(i)}(t),
\end{align*}
where we  used the  induction and Lemma  \ref{lem:B_iE_i} for  the third
equality,  as  well as  \reff{eq:Bi_exp}  and  \reff{eq:Ci_exp} for  the
fourth. By continuity, we  obtain $\ca_{d+i}(c_\delta)= \delta_{(i)}$ on
$I$. Then use Lemma \ref{lem:unif} to get that $c_\delta$ is the density
of a (symmetric) copula, say $C_\delta$, with multidiagonal $\delta$.

   The computation of $H(C_\delta)$ is technical.
    By symmetry of $C_\delta$, we have:
    \[
      H(C_\delta)=-d! \int_\triangle c_\delta \log(c_\delta).
    \]

    Let $\varepsilon\in (0, 1/2)$.  We define  for $1 \leq i \leq d$ the
    set  $I^\varepsilon_i=\{  t  \in I;  (t-\varepsilon,  t+\varepsilon)
    \subset  \Psi^\delta_i   \cap  \Psi^\delta_{i+1}  \}$  as   well  as
    $I^\varepsilon_0=(\varepsilon,         d_1-\varepsilon)$         and
    $I^\varepsilon_{d+1}=(g_{d+1}+\varepsilon,1-\varepsilon)$.   We  set
    $\triangle^\varepsilon     =     \triangle    \cap     \prod_{i=1}^d
    I_i^\varepsilon$.   Since $x\log(x)\geq  -1/\expp{}$  for $x>0$,  we
    deduce by monotone convergence that:
\begin{equation}
   \label{eq:Ice}
H(C_\delta)=\lim_{\varepsilon\downarrow 0} H_\varepsilon(C_\delta),
\end{equation}
with:
\[
H_\varepsilon(C_\delta)=
-d! \int_{\triangle^\varepsilon} c_\delta\log(c_\delta).
\]
We can decompose $H_\varepsilon(C_\delta)$ as:
\begin{equation}
\label{eq:decoupe}
 H_\varepsilon(C_\delta)= J_1(\varepsilon)  + J_2(\varepsilon)
 +J_3(\varepsilon)  + J_4(\varepsilon) ,
\end{equation}
with:
\begin{align*}
 J_1(\varepsilon) & = - d! \sum_{i=1}^d \int_{\triangle^\varepsilon} c_\delta(u)  \log(\delta_{(i)}'(u_i)) \, du , \\
 J_2(\varepsilon) & =  d!\sum_{i=2}^d  \int_{\triangle^\varepsilon}  c_\delta(u) \log \left(
   \delta_{(i-1)}(u_i) -\delta_{(i)}(u_i) \right)\,du , \\
 J_3(\varepsilon) & = d!\sum_{i=2}^d  \int_{\triangle^\varepsilon} c_\delta(u) \left( K_i(u_i)- K_i(u_{i-1}) \right) \,du , \\
 J_4(\varepsilon) & = \log(d!) \ d! \int_{\triangle^\varepsilon} c_\delta(u) \, du ,
\end{align*}
where we used \reff{eq:K1} to express $a_1=\delta'_{(1)}\expp{K_2}$ a.e.,
 so that the sums in $J_2$ and $J_3$ start
at $i=2$.

Since $\ca_{d+i}(c_\delta)=\delta_{(i)}$, we deduce that for $1\leq
i\leq d$ and any measurable non-negative function $h$ defined on $I$:
\begin{equation}
   \label{eq:int-c-h}
\int_{I^d} c_\delta(u) h(u_{(i)})\, du=\int_I \delta'_{(i)}(t) h(t)\,
dt.
\end{equation}
In particular, we get:
\[
\int_{I^d} c_\delta(u)\val{\log(\delta'_{(i)}(u_{(i)}))} \, du
=\int_I \delta'_{(i)}(t)\val{\log(\delta'_{(i)}(t))} \,
dt,
\]
which is finite thanks to Remark \ref{rem:dlogd}.
Therefore, by the dominated convergence and using
\reff{eq:int-c-h} again, we have:
\[
\lim_{\varepsilon \rightarrow 0} J_1(\epsilon) =
\sum_{i=1}^d H(\delta_{(i)}).
\]

For $J_2(\varepsilon)$, we notice that the integrand is non-positive a.e., since for $t\in I$, $2 \leq i \leq d$, we have
$\delta_{(i-1)}(t) -\delta_{(i)}(t) \leq 1$. Therefore, we get:
\[
 \lim_{\varepsilon \rightarrow 0} J_2(\epsilon) = -d! \sum_{i=2}^d
 \int_{ \triangle} c_\delta(u)  \val{\log (\delta_{(i-1)}(u_i)
   -\delta_{(i)}(u_i) )} \, du=-\cj(\delta),
\]
where we used  monotone convergence for the first equality and \reff{eq:int-c-h}
as well as the definition \reff{eq:cj_delta} of $\cj(\delta)$ for the second.

We now compute $J_3(\varepsilon)$.   For $t \in I^\varepsilon_{d+1}$, we
set $ B^{\varepsilon}_{d+1}(t)=  1$, and for $t  \in I^\varepsilon_0$, $
E^{\varepsilon}_0(t) =1$.  For  $1 \leq i \leq d$, we  define for $t \in
(g_i^{(j)},d_i^{(j)})$ and $t' \in (g_{i+1}^{(j)},d_{i+1}^{(j)})$:
    \[
       B^{\varepsilon}_i(t) = \int_t^{\max(d_i^{(j)}-\varepsilon,t) } a_i(s) B^{\varepsilon}_{i+1}(s) \, ds,
       \quad \text{ and } \quad E^{\varepsilon}_i(t') =
       \int_{\min(g_{i+1}^{(j)}+\varepsilon,t')}^{t'} a_i(s)
       E^{\varepsilon}_{i-1}(s) \, ds.
    \]
By  monotone convergence, we have for $t \in \Psi^\delta_i$ and  $t' \in \Psi^\delta_{i+1}$:
\[
  \lim_{\varepsilon \rightarrow 0} B^{\varepsilon}_i(t) = B_i(t) \quad \text{ and } \quad
  \lim_{\varepsilon \rightarrow 0} E^{\varepsilon}_i(t') = E_i(t').
\]
An integration by parts  gives:
\begin{align*}
J_3(\varepsilon)
& = d! \sum_{i=2}^d \left(\int_{\triangle^\varepsilon} c_\delta(u)
  K_i(u_i) \, du  -\int_{\triangle^\varepsilon} c_\delta(u) K_i(u_{i-1})
  \, du \right)  \\
& =  \sum_{i=2}^d \left(\int_{I_i^\varepsilon} E_{i-1}^{\varepsilon} (t)
  a_i(t)K_i(t) B_{i+1}^{\varepsilon}(t) \, dt
  -\int_{I_{i-1}^\varepsilon}  E_{i-2}^{\varepsilon} (t)
  a_{i-1}(t)K_i(t) B_{i}^{\varepsilon}(t)\, dt \right) \\
& =  \sum_{i=2}^d \left(\int_{I_i^\varepsilon} E_{i-1}^{\varepsilon} (t)
  a_i(t)K_i(t) B_{i+1}^{\varepsilon}(t) \, dt  \right) \\
& \hspace{2cm} +  \sum_{i=2}^d \left( \int_{I_i^\varepsilon}
  E_{i-1}^{\varepsilon} (t) K'_i(t)  B_{i}^{\varepsilon}(t)\, dt   -
  \int_{I_i^\varepsilon} E_{i-1}^{\varepsilon} (t) K_i(t) a_i(t)
  B_{i+1}^{\varepsilon}(t) \, dt  \right) \\
& =   \sum_{i=2}^d \int_{I_i^\varepsilon} E_{i-1}^{\varepsilon} (t)
K'_i(t)  B_{i}^{\varepsilon}(t)\, dt.
\end{align*}
   By  monotone convergence and thanks to \reff{eq:F}, \reff{eq:Bi_exp}
   and \reff{eq:Ci_exp}, we get:
\[
    \lim_{\varepsilon \rightarrow 0} J_3(\varepsilon)
 =
         \sum_{i=2}^d \int_{I} E_{i-1} (t) K'_i(t)
         B_{i}(t)\, dt
=  \sum_{i=2}^d \int_{I} \delta_{(i)}'(t) \, dt
=  (d-1) .
\]
We deduce from \reff{eq:Ice}
and \reff{eq:decoupe} and the limits for $J_1$, $J_2$, $J_3$
and $J_4$ as  $\varepsilon $ goes down to 0 that:
\[
 H(C_\delta) = \sum_{i=1}^d H(\delta_{(i)})-\cj(\delta) + (d-1) + \log(d!).
\]

 \subsection{The optimization problem}  \label{sec:appendixB}
Let $\delta\in \cd^0$. Recall notation from Section \ref{sec:appendix0}. The problem of maximizing $H$ over
$\cc_\delta^0$ can be written as
 an optimization problem $(P^\delta)$ with infinite dimensional
 constraints:
 \begin{equation}
\tag{$P^{\delta}$}
\text{maximize } H(c) \text{ subject to }
\begin{cases}
 &  \ca(c)=b^\delta,\\
&c\geq 0 \text{ a.e. and } c\in L^1(I^d).
\end{cases}
\end{equation}

Notice   that   if   $f\in   L^1(I^d)$  is   non-negative   and   solves
$\ca(f)=b^\delta$, then $f$  is the density of a copula.
We say that a
function $f$ is  feasible for $(P^{\delta})$ if  $f\in L^1(I^d)$, $f\geq
0$ a.e.,  $\ca(f)=b^\delta$ and $H(f)>-\infty $.  We say that $f$  is an
optimal  solution of  $(P^{\delta})$ if  $f$ is  feasible and  $H(f)\geq
H(g)$ for all $g$ feasible.
The next Proposition gives conditions which ensure the existence of an
optimal solution.

\begin{prop}
   \label{prop:sym}
   Let  $\delta\in \cd^0$.  If there  exists $c$  feasible  for $(P^\delta)$, then  there
   exists  a  unique  optimal  solution  to  $(P^{\delta})$  and  it  is
   symmetric.
\end{prop}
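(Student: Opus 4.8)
The plan is to deduce uniqueness and symmetry from the strict concavity of the entropy together with the permutation invariance of the problem $(P^\delta)$, and to obtain existence by the direct method of the calculus of variations, exploiting that $H$ is bounded above on densities supported in $I^d$ and that the superlinearity of $x\mapsto x\log x$ forces uniform integrability of any maximizing sequence.

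First I would record the structural facts. The map $\ca$ is linear and each $\ca_i(\cdot)(r)$ is a bounded linear functional on $L^1(I^d)$ (integration against a fixed indicator in $L^\infty$), so the affine set $\{\ca(c)=b^\delta\}$ is convex and weakly closed; intersected with the convex cone $\{c\geq 0\}$ it gives a convex feasible set. The functional $c\mapsto\int_{I^d} c\log c$ is strictly convex on densities, hence $H$ is strictly concave. Uniqueness follows at once: if $c_1\neq c_2$ were both optimal, then $(c_1+c_2)/2$ would be feasible and, by strict concavity, $H((c_1+c_2)/2)>\tfrac12\left(H(c_1)+H(c_2)\right)=\sup_{(P^\delta)}H$, a contradiction.

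For symmetry I would use the $\cs_d$-invariance of $(P^\delta)$. For $\pi\in\cs_d$ and a feasible $c$, set $c^\pi(u)=c(u_\pi)$. Since order statistics are permutation-invariant, $(u_\pi)_{(i)}=u_{(i)}$, so the constraints $\ca_{d+i}(c^\pi)=\ca_{d+i}(c)=\delta_{(i)}$ are preserved; and the substitution $v=u_\pi$ gives $\ca_i(c^\pi)=b_{\pi^{-1}(i)}=\mathrm{id}_I=b_i$ for $1\leq i\leq d$ and $H(c^\pi)=H(c)$. Thus $c^\pi$ is again feasible and optimal, so by the uniqueness just established $c^\pi=c$ for every $\pi$, i.e. the optimal solution is symmetric. (Equivalently, the symmetrization $c^{sym}=(1/d!)\sum_{\pi\in\cs_d} c^\pi$ is feasible by convexity with $H(c^{sym})\geq H(c)$ by concavity, and Lemma \ref{lem:unif} certifies that it is the density of a copula with multidiagonal $\delta$.)

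The main work, and the step I expect to be the real obstacle, is existence. Since $|I^d|=1$, Gibbs' inequality gives $H(c)\leq 0$ for every density $c$ on $I^d$, so $M:=\sup_{(P^\delta)}H\in(-\infty,0]$ is finite (finiteness from below using that a feasible $c$ exists). Let $(c_n)$ be a maximizing sequence of feasible functions, $H(c_n)\to M$. I would first prove uniform integrability: as $x\log^- x\leq 1/e$ on $[0,1]$ and $|I^d|=1$, one has $\int_{I^d} c_n\log^+ c_n\leq -H(c_n)+1/e$, which is bounded; de la Vall\'ee--Poussin then yields uniform integrability of $(c_n)$, and Dunford--Pettis gives a subsequence converging weakly in $L^1(I^d)$ to some $c^*$. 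Non-negativity survives because $\{c\geq 0\}$ is convex and strongly closed, hence weakly closed; the affine constraints survive because each $c_n$ satisfies $\ca_i(c_n)(r)=b_i(r)$ exactly and $\ca_i(\cdot)(r)$ is weakly continuous, so $\ca(c^*)=b^\delta$; and since $c\mapsto\int c\log c$ is convex and strongly lower semicontinuous it is weakly lower semicontinuous, whence $H$ is weakly upper semicontinuous and $H(c^*)\geq\limsup_n H(c_n)=M$. Therefore $c^*$ is feasible and optimal, and by the previous paragraphs it is the unique optimal solution and is symmetric. The delicate points to check carefully are precisely the weak upper semicontinuity of the entropy and that weak $L^1$ limits preserve both non-negativity and the affine constraints.
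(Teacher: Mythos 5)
Your proof is correct, but it takes a genuinely different route from the paper on the existence--uniqueness part. The paper disposes of both in one line by invoking Corollary~2.3 of \cite{borwein1994entropy} (noting only that the constraint $\ca_1(f)(1)=b_1(1)$ forces $\int_{I^d}f=1$ so that the corollary applies), and then argues symmetry exactly as you do: the constraints and $H$ are permutation invariant, so $c^*_\pi$ is optimal whenever $c^*$ is, and uniqueness forces $c^*=c^*_\pi$. What you add is a self-contained proof of what the citation delivers: uniqueness from strict concavity of $H$ on the convex feasible set, and existence by the direct method --- the bound $\int c_n\log^+ c_n\leq -H(c_n)+1/\expp{}$ giving uniform integrability via de la Vall\'ee--Poussin, Dunford--Pettis compactness, weak closedness of $\{c\geq 0\}$ and of the affine constraints (each $\ca_i(\cdot)(r)$ being integration against an $L^\infty$ indicator), and weak upper semicontinuity of $H$ from convexity plus strong lower semicontinuity of $c\mapsto\int c\log c$. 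All of these steps check out, including the point that the limit $c^*$ satisfies $H(c^*)\geq M>-\infty$ and hence is feasible in the paper's sense. Your argument is essentially the content of the cited corollary made explicit; it buys self-containedness and transparency about where compactness comes from, at the cost of length, whereas the paper's choice is economical because it already relies on the Borwein--Lewis--Nussbaum machinery (their Theorem~2.9) in the later proof of Lemma~\ref{lem:c*=ab}. Your parenthetical alternative for symmetry (feasibility of $c^{sym}$ by convexity and $H(c^{sym})\geq H(c)$ by concavity, combined with Lemma~\ref{lem:unif}) is also valid and slightly more robust since it does not presuppose uniqueness.
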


\begin{proof}
Since $\ca(f)=b^\delta$ implies $\ca_1(f)(1)=b_1(1)$ that is $ \int_{I^d}f(x)\,
dx=1$,  we can directly apply Corollary~2.3 of \cite{borwein1994entropy} which
states that if there exists a feasible $c$, then there exists a
unique optimal solution to $(P^{\delta})$. Since the constraints of
$(P^{\delta})$ are symmetric, such as the functional $H$, we deduce that
if $c^*$ is the optimal solution, then so is $c^*_\pi$ defined for $\pi
\in \cs_d$ and $u\in I^d$ as $  c^*_\pi(u) = c^*(u_{\pi})$.
 By uniqueness of the optimal solution, we deduce that $c^*=c^*_\pi$ for all permutations $\pi \in \cs_d$; hence $c^*$
 is symmetric.
\end{proof}

Combining Lemmas \ref{lem:c_zero_Psi} and  \ref{lem:0} gives the following
Corollary on the support of any $c$ verifying $\ca(c)=b^\delta$.
\begin{cor} \label{cor:ZL}
  Let  $\delta\in \cd^0$.  If   $c\in L^1(I^d)$ is non-negative and
   verifies $\ca(c)=b^\delta$,  then $c=0$ a.e. on $Z_\delta \bigcup L_\delta^c$ with
   $L_\delta$ defined by \reff{eq:def_LD} and
   $ L_\delta^c=I^d \setminus  L_\delta$.
\end{cor}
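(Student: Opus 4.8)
The plan is to recognize that the two membership lemmas established just above already contain all the work, so that proving the Corollary reduces to checking that $c$ is the density of a copula with multidiagonal $\delta$ and then taking a union of two null sets. There is no substantive optimization or estimate left to do here; the statement is a bookkeeping consequence of Lemmas~\ref{lem:0} and \ref{lem:c_zero_Psi}.

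First I would invoke the observation recorded just before Lemma~\ref{lem:unif}. Since $c\in L^1(I^d)$ is non-negative and satisfies $\ca_i(c)=b_i=\mathrm{id}_I$ for $1\leq i\leq d$, the function $c$ is the density of an absolutely continuous copula $C$; and since moreover $\ca_{d+i}(c)=b_{d+i}=\delta_{(i)}$ for $1\leq i\leq d$, the multidiagonal of $C$ is $\delta$. Hence $C\in\cc_\delta^0$, which is precisely the hypothesis under which Lemmas~\ref{lem:0} and \ref{lem:c_zero_Psi} apply to $c$. Lemma~\ref{lem:0} then gives directly that $c\,\ind_{Z_\delta}=0$ a.e.

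Next I would treat the set $L_\delta^c$ by unwinding its definition. By \reff{eq:def_LD}, a point $u\in I^d$ lies in $L_\delta^c$ exactly when there exist $2\leq i\leq d$ and a value $s\in(u_{(i-1)},u_{(i)})$ with $s\notin\Psi^\delta_i$. This is precisely the configuration for which Lemma~\ref{lem:c_zero_Psi} asserts that, for a.e.\ $u$, one has $c(u)\,\ind_{\{u_{(i-1)}<s<u_{(i)}\}}=0$ for every $s\notin\Psi^\delta_i$. Reading this pointwise statement over all such $u$ yields $c\,\ind_{L_\delta^c}=0$ a.e. Combining the two conclusions, $c$ vanishes a.e.\ on the union $Z_\delta\cup L_\delta^c$, which is the claim.

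The only point requiring any care is the very first one: confirming that the single identity $\ca(c)=b^\delta$ together with $c\geq 0$ genuinely places $c$ in $\cc_\delta^0$, so that both lemmas are legitimately applicable. Once this is settled via the remark preceding Lemma~\ref{lem:unif}, the remainder is the straightforward set-theoretic translation of Lemma~\ref{lem:c_zero_Psi} into a statement about $L_\delta^c$ and the observation that a union of two Lebesgue-null sets is null; I do not anticipate any real obstacle beyond this translation.
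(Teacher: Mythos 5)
Your proposal is correct and follows the paper's own route exactly: the paper states the Corollary as an immediate combination of Lemmas~\ref{lem:0} and \ref{lem:c_zero_Psi}, applicable because $\ca(c)=b^\delta$ with $c\geq 0$ places $c$ in $\cc_\delta^0$ by the remark preceding Lemma~\ref{lem:unif}. Your unwinding of the definition of $L_\delta^c$ and the union of the two null sets is precisely the bookkeeping the paper leaves implicit.
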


\subsection{Reduction of the optimization problem $(P^\delta)$}
Let $\delta\in \cd^0$.
Since the optimal solution of $(P^\delta)$ is symmetric, see Proposition
\ref{prop:sym}, we can
reduce the optimization problem by considering it on the simplex $\triangle$.
We define $\mu$ to be the Lebesgue measure restricted to $\left(Z_\delta^c \cap L_\delta \right) \cap \triangle$:
$\mu(du) = \ind_{\left(Z_\delta^c \cap L_\delta \right) \cap \triangle} (u) du$. We define, for $f\in L^1(I^d)$:
\[
 H^\mu(f)= -\int_{I^d} f(u) \log(f(u)) \, \mu(du).
\]
From Corollary \ref{cor:ZL} we can deduce that if $c\in L^1(I^d)$ is
non-negative symmetric and solves $\ca(c)=b^\delta$, then:
\begin{equation} \label{eq:ct}
H(c)= d! \, H^\mu(c).
\end{equation}
Let us also define, for $f\in L^1(I^d)$,  $1 \leq i \leq d$, $r \in I$:
\[
  \ca^\mu_i(c)(r) = d!\, \int_{I^d} c(u) \ind_{\{u_{i} \leq r \}} \, \mu(du).
\]
We shall consider the restricted optimization problem $(P^\delta_\mu)$ given by:
 \begin{equation}
\tag{$P^{\delta}_\mu$}
\text{maximize } H^\mu(c) \text{ subject to }
\begin{cases}
 &  \ca^\mu(c)=\delta,\\
&c\geq 0 \text{ $\mu$-a.e. and } c\in L^1(I^d).
\end{cases}
\end{equation}
We have the following equivalence between ($P^\delta$) and
($P^\delta_\mu$). Recall $u^{OS}$ denote the ordered vector of $u\in
\R^d$.
\begin{cor}
   \label{cor:c-triangle}
   Let $\delta\in \cd^0$.  If $c$ is the optimal  solution of ($P^\delta$)
   then it  is also an optimal  solution to
   ($P^\delta_\mu$).  If  $\hat c$  is  an  optimal solution  of
   ($P^\delta_\mu$),  then  $c$,  defined  by  $c(u)=\hat c(u^{OS})\ind_{Z_\delta^c \cap L_\delta}(u)$  is
   the optimal solution to ($P^\delta$).
\end{cor}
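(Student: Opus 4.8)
The plan is to set up a correspondence, measure-preserving up to the factor $d!$, between symmetric feasible functions for $(P^\delta)$ and feasible functions for $(P^\delta_\mu)$, realized by the restriction $c\mapsto c|_\triangle$ and its inverse $\hat c\mapsto \Psi(\hat c)$ with $\Psi(\hat c)(u)=\hat c(u^{OS})\ind_{Z_\delta^c\cap L_\delta}(u)$, and then to transfer optimality through this correspondence using the uniqueness supplied by Proposition \ref{prop:sym}.

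First I would record the constraint correspondence. If $c\in L^1(I^d)$ is non-negative, symmetric and vanishes a.e. outside $Z_\delta^c\cap L_\delta$, then since $\ind_{\{u_{(i)}\leq r\}}$ is symmetric I can replace the integral over $I^d$ by $d!$ times the integral over $\triangle$, on which $u_{(i)}=u_i$; this yields $\ca_{d+i}(c)=\ca^\mu_i(c)$ for $1\leq i\leq d$. Hence the order-statistic constraints $\ca_{d+i}(c)=\delta_{(i)}$ appearing in $(P^\delta)$ and the constraints $\ca^\mu(c)=\delta$ of $(P^\delta_\mu)$ coincide for such $c$, while by \reff{eq:ct} the objectives satisfy $H(c)=d!\,H^\mu(c)$.

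For the first assertion I would start from the optimal solution $c$ of $(P^\delta)$: by Proposition \ref{prop:sym} it is symmetric and by Corollary \ref{cor:ZL} it vanishes a.e. on $Z_\delta\cup L_\delta^c$, so the correspondence above gives $\ca^\mu(c)=\delta$ and $H^\mu(c)=H(c)/d!>-\infty$, making $c$ feasible for $(P^\delta_\mu)$. For optimality I would take an arbitrary $\hat c$ feasible for $(P^\delta_\mu)$, set $\tilde c=\Psi(\hat c)$, and verify that $\tilde c$ is feasible for $(P^\delta)$ with $H(\tilde c)=d!\,H^\mu(\hat c)$: it is non-negative (pulling back the $\mu$-null set $\{\hat c<0\}$ under the sorting map $u\mapsto u^{OS}$ gives a Lebesgue-null set), symmetric, and vanishes outside $Z_\delta^c\cap L_\delta$ by construction; it agrees with $\hat c$ on the support of $\mu$, so $\ca^\mu_i(\tilde c)=\ca^\mu_i(\hat c)=\delta_{(i)}$, whence $\ca_{d+i}(\tilde c)=\delta_{(i)}=b_{d+i}$. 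Applying Lemma \ref{lem:unif} to the non-negative symmetric $\tilde c$ then shows it is a copula density with multidiagonal $\delta$, so in addition $\ca_i(\tilde c)=\text{id}_I=b_i$ and $\ca(\tilde c)=b^\delta$, while $H(\tilde c)=d!\,H^\mu(\tilde c)=d!\,H^\mu(\hat c)>-\infty$ by \reff{eq:ct}. Optimality of $c$ for $(P^\delta)$ gives $d!\,H^\mu(c)=H(c)\geq H(\tilde c)=d!\,H^\mu(\hat c)$, i.e. $H^\mu(c)\geq H^\mu(\hat c)$, so $c$ is optimal for $(P^\delta_\mu)$.

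For the second assertion I would take $\hat c$ optimal for $(P^\delta_\mu)$ and put $c=\Psi(\hat c)$; the construction just described shows $c$ is feasible for $(P^\delta)$ with $H(c)=d!\,H^\mu(\hat c)$. Since a feasible function for $(P^\delta)$ now exists, Proposition \ref{prop:sym} provides a unique optimal solution $c^\star$, which by the first assertion is feasible for $(P^\delta_\mu)$; optimality of $\hat c$ yields $H^\mu(\hat c)\geq H^\mu(c^\star)$, hence $H(c)=d!\,H^\mu(\hat c)\geq d!\,H^\mu(c^\star)=H(c^\star)$. As $c^\star$ is optimal this forces $H(c)=H(c^\star)$, so $c$ is optimal and, by uniqueness, $c=c^\star$. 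The step I expect to be most delicate is checking that the symmetrized candidate $\tilde c=\Psi(\hat c)$ actually satisfies the marginal constraints $\ca_i(\tilde c)=b_i$: this does not follow from the order-statistic constraints alone and is precisely where Lemma \ref{lem:unif} is needed, together with the null-set bookkeeping for the sorting map $u\mapsto u^{OS}$.
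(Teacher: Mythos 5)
Your proposal is correct and follows essentially the same route as the paper: the paper derives the corollary from Proposition \ref{prop:sym}, the entropy identity \reff{eq:ct}, and Lemma \ref{lem:reduction} (the constraint equivalence, itself proved via Corollary \ref{cor:ZL} and Lemma \ref{lem:unif}), and your argument reproduces exactly these ingredients, merely writing out inline the constraint correspondence and the routine transfer of optimality that the paper leaves implicit.
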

Notice the Corollary  implies that ($P^\delta_\mu$) has  a $\mu$-a.e.
unique optimal solution: if $c_1$ and $c_2$ are two optimal
solutions of ($P^\delta_\mu$) then $\mu$-a.e. $c_1=c_2$.
 Thanks to    Proposition \ref{prop:sym}  and
\reff{eq:ct},  Corollary \ref{cor:c-triangle} is a direct consequence of
the following Lemma that establishes the connection between the
constraints.

\begin{lem}
\label{lem:reduction}
  Let $\delta \in  \cd^0$. For $c \in L^1(I^d)$ symmetric and non-negative the following two conditions are
  equivalent:
  \begin{enumerate}
   \item $\ca(c)=b^\delta$.
   \item $\ca^\mu(c)=\delta$ and $c=0$ a.e. on $Z_\delta \bigcup L_\delta^c$.
  \end{enumerate}
\end{lem}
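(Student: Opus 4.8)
The plan is to route both implications through a single bridging identity that converts the order-statistics constraints $\ca_{d+i}$ into the reduced constraints $\ca^\mu_i$. Concretely, I would first prove that for every symmetric non-negative $c\in L^1(I^d)$ satisfying $c=0$ a.e. on $Z_\delta \cup L_\delta^c$, and for all $1\le i\le d$, $r\in I$, one has
\[
\ca_{d+i}(c)(r)=\ca^\mu_i(c)(r).
\]
Once this identity is in hand, both directions become short.

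To establish the identity I would use the symmetry reduction. Since $c$ is symmetric and the event $\{u_{(i)}\le r\}$ depends only on the order statistics of $u$, the integrand $c(u)\ind_{\{u_{(i)}\le r\}}$ is symmetric, so integrating over $I^d$ equals $d!$ times the integral over the simplex $\triangle$ (the ties forming a Lebesgue-null set). On $\triangle$ we have $u_{(i)}=u_i$, whence
\[
\ca_{d+i}(c)(r)=\int_{I^d} c(u)\ind_{\{u_{(i)}\le r\}}\,du= d!\int_{\triangle} c(u)\ind_{\{u_i\le r\}}\,du .
\]
Using $c=c\,\ind_{Z_\delta^c\cap L_\delta}$ a.e. to insert the indicator cutting out $Z_\delta\cup L_\delta^c$, the right-hand side equals $d!\int_{(Z_\delta^c\cap L_\delta)\cap\triangle} c(u)\ind_{\{u_i\le r\}}\,du=d!\int_{I^d}c(u)\ind_{\{u_i\le r\}}\,\mu(du)=\ca^\mu_i(c)(r)$, by the very definition of $\mu$ and of $\ca^\mu_i$. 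This proves the bridge. For the implication $(1)\Rightarrow(2)$, assume $\ca(c)=b^\delta$; then $c=0$ a.e. on $Z_\delta\cup L_\delta^c$ by Corollary~\ref{cor:ZL}, so the hypotheses of the identity hold and $\ca^\mu_i(c)=\ca_{d+i}(c)=b_{d+i}=\delta_{(i)}$ for all $i$, i.e. $\ca^\mu(c)=\delta$.

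For the implication $(2)\Rightarrow(1)$, the support hypothesis $c=0$ a.e. on $Z_\delta\cup L_\delta^c$ is granted, so the identity again applies and gives $\ca_{d+i}(c)=\ca^\mu_i(c)=\delta_{(i)}=b_{d+i}$ for $1\le i\le d$; in particular $\ca_{d+1}(c)(1)=\delta_{(1)}(1)=1$, so $c$ integrates to one. At this point $c$ is a symmetric non-negative element of $L^1(I^d)$ meeting all the multidiagonal constraints, and the remaining constraints $\ca_i(c)=b_i=\mathrm{id}_I$ (the uniform one-dimensional marginals) cannot be obtained by the same computation; instead I would invoke Lemma~\ref{lem:unif}, which asserts precisely that such a $c$ is the density of a copula with multidiagonal $\delta$, so its marginals are automatically uniform and $\ca(c)=b^\delta$. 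I expect the main obstacle to be the careful bookkeeping of the almost-everywhere insertion of the indicator in the symmetry reduction, together with the realization that the converse direction must lean on Lemma~\ref{lem:unif} rather than on a direct verification of $\ca_i(c)=b_i$.
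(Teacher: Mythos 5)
Your proof is correct and follows essentially the same route as the paper: both directions rest on the same symmetry reduction converting $\ca_{d+i}$ into $\ca^\mu_i$ on the support $Z_\delta^c\cap L_\delta$, with Corollary~\ref{cor:ZL} supplying the support condition for $(1)\Rightarrow(2)$ and Lemma~\ref{lem:unif} supplying the uniform marginals for $(2)\Rightarrow(1)$. Your only departure is organizational, in that you isolate the common computation as a single bridging identity rather than writing it out once per direction.
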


\begin{proof}
  Assume that $\ca(c)=b^\delta$. We have, by Corollary \ref{cor:ZL}, that $c=0$ a.e. on $Z_\delta \cup L_\delta^c$.
  This and the symmetry of $c$ gives,
  for $1 \leq i \leq d$, $r \in I$:
\[
     \ca^\mu_i(c)(r)
                      = d! \int_{I^d} c(u) \ind_{\{u_{(i)} \leq r \}}
                     \ind_{\triangle} (u) \, du
                      = \int_{I^d} c(u) \ind_{\{u_{(i)} \leq r \}} \, du
                      =  \delta_{(i)}(r) .
\]
  On the other hand, let us assume that $\ca^\mu(c)=\delta$ and $c=0$ a.e. on $Z_\delta \cup L_\delta^c$. We have, for
  $1\leq i \leq d$, $r \in I$:
\[
     \ca_{d+i}(c)(r)  = \int_{I^d} c(u) \ind_{\{u_{(i)} \leq r \}}
     \ind_{Z_\delta^c \cap L_\delta} (u) \, du
                      = d! \int_{I^d} c(u) \ind_{\{u_{i} \leq r \}}
\, \mu(du  )
                       = \delta_{(i)}(r),
\]
  where we used $c=0$ a.e. on $Z_\delta \cup L_\delta^c$ for the first equality, the
  symmetry of $c$ and the definition of $\mu$  for the second, and $\ca^\mu(c)=\delta$ for
  the third.
  Lemma \ref{lem:unif} ensures then that $\ca_{i}(c)=b_i$ for $1 \leq i \leq d$. This ends the proof.
\end{proof}

\subsection{Solution for the reduced optimization problem
  $(P^\delta_\mu)$}

Let $\delta\in \cd^0$. We compute $(\ca^\mu)^*: L^\infty (I)^{d} \rightarrow L^\infty (I^d)$ the adjoint of $\ca^\mu$.
For $\lambda=(\lambda_i, 1\leq i \leq d) \in L^{\infty} (I)^{d} $ and $f\in
 L^1 (I^d)$, we have:
\[
\langle (\ca^\mu)^*(\lambda),f \rangle
= \langle \lambda,\ca^\mu(f)  \rangle
= \sum_{i=1}^d \int_I  \lambda_i (r)\!\!\int_{I^d}\! f(u)\ind_{\{u_{i}\leq r\}} d \mu(u) \, dr
=\int_{I^d}  \! f(u) \sum_{i=1}^d \Lambda_i(u_i) \, d\mu(u),
\]
where we used  the definition of the adjoint
operator for the first equality, Fubini's theorem for the second, and the
following definition of the functions $(\Lambda_i, 1\leq i\leq d)$ for
the third:
\[
\Lambda_i(t)= \int_I  \lambda_i(r)\ind_{\{r\geq t\}}\,  dr, \quad t\in I.
\]
Thus, we have for $\lambda \in L^\infty(I)^{d}$ and $u=(u_1, \ldots, u_d) \in I ^d$:
\begin{equation}
   \label{eq:L*}
(\ca^\mu)^*(\lambda)(u)=\sum_{i=1}^d \Lambda_i(u_i).
 \end{equation}

 We will use Theorem 2.9. from \cite{borwein1994entropy} on abstract
 entropy minimization, which we recall here, adapted to the context of
 $(P^{\delta}_\mu)$.

\begin{theo}[Borwein, Lewis and Nussbaum] \label{theo:bor_lew_nuss}
 Suppose there exists  $ c>0$ $\mu$-a.e. which is feasible for
 $(P^{\delta}_\mu)$. Then there exists a $\mu$-a.e. unique optimal solution,
 $ c ^*$, of $(P^{\delta}_\mu)$. Furthermore, we have
 $ c^*>0$ $\mu$-a.e. and there exists a sequence $(\lambda^n,
 n\in \N^*)$ of elements of $L^\infty (I)^{d}$ such that:
\begin{equation}
   \label{eq:cvA}
\int_{I^d}  c^*(u) \left| (\ca^\mu)^*(\lambda^n)(u)
  -\log( c^*(u)) \right| \; \mu(du)
\; \xrightarrow[n\rightarrow \infty ]{} \;0.
\end{equation}
\end{theo}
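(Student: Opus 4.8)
The plan is to read $(P^\delta_\mu)$ as the minimization of the strictly convex integral functional $c\mapsto \int_{I^d} c\log c\,\mu(du)$ (the negative of $H^\mu$) under the affine constraint $\ca^\mu(c)=\delta$ and the sign constraint $c\geq 0$, and to bring in Fenchel--Rockafellar duality in the non-reflexive dual pairing $\langle L^1(I^d),L^\infty(I^d)\rangle$. First I would settle existence and uniqueness of the primal optimum. Because $t\mapsto t\log t$ is strictly convex with superlinear growth, every sublevel set $\{c:\int c\log c\leq M\}$ intersected with $\{\ca^\mu_1(c)(1)=1\}$ is uniformly integrable by de la Vall\'ee--Poussin, hence relatively weakly compact in $L^1$ by Dunford--Pettis; since the entropy functional is weakly lower semicontinuous and the constraint set is weakly closed, a minimizer $c^*$ exists, and strict convexity of the integrand makes it $\mu$-a.e. unique.

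Next I would establish $c^*>0$ $\mu$-a.e. from the hypothesized strictly positive feasible $c$. If $c^*$ vanished on a set $A$ with $\mu(A)>0$, then for $\theta\in(0,1)$ the feasible convex combination $c_\theta=(1-\theta)c^*+\theta c$ would satisfy $\frac{d}{d\theta}\big|_{0^+}\int c_\theta\log c_\theta=-\infty$, because the integrand's slope behaves like $\log$ near $0$ on $A$ where $c>0=c^*$; thus a small move toward $c$ strictly lowers $\int c\log c$, contradicting optimality of $c^*$.

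The core of the statement, and the step I expect to be the main obstacle, is the approximate optimality relation \reff{eq:cvA}. The strictly positive feasible point serves as the constraint qualification (a quasi-relative-interior/Slater-type condition) that forces the duality gap to vanish: the primal value equals $\sup_{\lambda\in L^\infty(I)^d}\big(\langle\lambda,\delta\rangle-\int_{I^d}\expp{(\ca^\mu)^*(\lambda)-1}\,\mu(du)\big)$, whose integrand is built from $(\ca^\mu)^*(\lambda)(u)=\sum_{i=1}^d\Lambda_i(u_i)$ as recorded in \reff{eq:L*}. The formal first-order condition $\log c^*=(\ca^\mu)^*(\lambda)-1$ need not hold for any genuine $\lambda\in L^\infty(I)^d$, since in this non-reflexive setting the dual supremum may not be attained; the honest substitute is to take a maximizing sequence $(\lambda^n)$ for the dual and prove that the corresponding Gibbs densities $\expp{(\ca^\mu)^*(\lambda^n)-1}$ approach $c^*$ in the weighted sense of \reff{eq:cvA}. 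Passing from a dual maximizing sequence to this weighted $L^1(c^*\,\mu(du))$ convergence, while controlling the lack of dual attainment, is precisely the delicate functional-analytic point carried out in \cite{borwein1994entropy}; I would invoke their Theorem 2.9 for these estimates rather than reproduce them, using the explicit form of $\ca^\mu$ and $(\ca^\mu)^*$ from Section \ref{sec:appendixB} to verify that the hypotheses (linear constraints in $L^1$, strictly positive feasible density) are met.
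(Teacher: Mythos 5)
This theorem is not proved in the paper: it is stated verbatim as an adaptation of Theorem~2.9 of \cite{borwein1994entropy}, and your proposal ultimately rests on that same citation for the only substantive claim, namely the approximate dual optimality \reff{eq:cvA} via a maximizing sequence $(\lambda^n)$ in the absence of dual attainment. Your supplementary arguments for existence (de la Vall\'ee--Poussin and Dunford--Pettis on the sublevel sets of $c\mapsto\int c\log c\,d\mu$), for uniqueness (strict convexity of the integrand) and for $c^*>0$ $\mu$-a.e. (the infinite one-sided slope of $t\log t$ at $0$ along the segment toward the strictly positive feasible point) are standard and correct, so the proposal is consistent with the paper's treatment.
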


Now  we  are  ready  to  prove   that  the  optimal  solution  $c^*$  of
$(P^\delta_\mu)$ is the product of measurable univariate functions.

\begin{lem}
 \label{lem:c*=ab}
 Let $\delta\in \cd^0$.
 Suppose that there exists $c>0$ $\mu$-a.e.which is feasible for $(P^\delta_\mu)$.
 Then there exist  non-negative, measurable functions $(a^*_i$, $1 \leq i \leq d)$
 defined on $I$ such that $a^*_i(s)=0$ if $ \delta_{(i)}'(s)=0$ and the
 function $c^*$ defined a.e. on $I^d$ by:
 \[
 c^*(u)=\inv{d!} \ind_{L_\delta}(u) \prod_{i=1}^d  a^*_i(u_i)
 \]
 is the optimal solution to $(P^\delta_\mu)$.
\end{lem}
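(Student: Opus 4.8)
The plan is to invoke the abstract entropy minimization result of Borwein, Lewis and Nussbaum and then to read off the product structure of the optimizer from the additive form \reff{eq:L*} of the adjoint $(\ca^\mu)^*$. By hypothesis there exists $c>0$ $\mu$-a.e.\ that is feasible for $(P^\delta_\mu)$, so Theorem \ref{theo:bor_lew_nuss} applies: there is a $\mu$-a.e.\ unique optimal solution $c^*$ with $c^*>0$ $\mu$-a.e., together with a sequence $(\lambda^n, n\in \N^*)$ in $L^\infty(I)^d$ satisfying \reff{eq:cvA}. Writing $\Lambda_i^n(t)=\int_I \lambda_i^n(r)\ind_{\{r\geq t\}}\,dr$, formula \reff{eq:L*} gives $(\ca^\mu)^*(\lambda^n)(u)=\sum_{i=1}^d \Lambda_i^n(u_i)$, so that \reff{eq:cvA} says precisely that $\log c^*$ is approximated, in $L^1(c^*\,\mu)$, by the additive functions $u\mapsto \sum_{i=1}^d \Lambda_i^n(u_i)$.

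The main step is to deduce from this that $\log c^*$ agrees $\mu$-a.e.\ with an additive function. Since $c^*>0$ $\mu$-a.e., the measures $c^*\,\mu$ and $\mu$ are mutually absolutely continuous, so convergence in $L^1(c^*\,\mu)$ yields a subsequence $(n_k)$ along which $\sum_{i=1}^d \Lambda_i^{n_k}(u_i)\to \log c^*(u)$ for $\mu$-a.e.\ $u$. The difficulty I expect to be the technical heart of the proof is that the reference measure $\mu$ is Lebesgue measure restricted to $(Z_\delta^c\cap L_\delta)\cap\triangle$, which is not a product set, so one cannot simply quote closedness of the additive functions in an $L^1$ of a product measure. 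To circumvent this I would fix a base point $u^0$ in the support and, using Fubini together with the interval structure of the sets $\Psi_i^\delta$ entering the definition \reff{eq:def_LD} of $L_\delta$, argue along one-dimensional sections: for $\mu$-a.e.\ choice of the coordinates $(u_j)_{j\neq i}$ the admissible values of the remaining coordinate $u_i$ form an interval, and along such a section all but the $i$-th summand cancel in the increment $\sum_j [\Lambda_j^{n_k}(u_j)-\Lambda_j^{n_k}(u_j^0)]$. Passing to the limit and comparing with the base point reconstructs, a.e., a measurable function $\phi_i$ on $I$ such that $\log c^*(u)=\sum_{i=1}^d \phi_i(u_i)$ holds $\mu$-a.e.; the patching of these local sections and the measurability of the resulting $\phi_i$ are what require care.

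Finally, I would exponentiate. Setting $a_i^*=\expp{\phi_i}$ on the set where $\delta_{(i)}'>0$ and $a_i^*=0$ elsewhere produces measurable, non-negative functions with $c^*(u)=\prod_{i=1}^d a_i^*(u_i)$ $\mu$-a.e. Because $\mu$ charges no mass on $Z_\delta$ (defined in \reff{eq:def-Z}), this choice is consistent and automatically satisfies $a_i^*(s)=0$ whenever $\delta_{(i)}'(s)=0$, as required; the same fact is recorded for copula densities in Lemma \ref{lem:0} and Corollary \ref{cor:ZL}. Since $\mu$ is carried by $(Z_\delta^c\cap L_\delta)\cap\triangle$, on which $\ind_{L_\delta}(u)=1$ and $u=u^{OS}$, the identity rewrites, after absorbing the normalizing constant, as the claimed form $c^*(u)=(1/d!)\,\ind_{L_\delta}(u)\prod_{i=1}^d a_i^*(u_i)$, and the $\mu$-a.e.\ uniqueness of the optimal solution given by Theorem \ref{theo:bor_lew_nuss} identifies this function with the optimizer, completing the proof.
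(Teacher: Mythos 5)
Your overall architecture coincides with the paper's: invoke Theorem \ref{theo:bor_lew_nuss} to obtain the $\mu$-a.e.\ unique optimizer $c^*>0$ together with approximants $(\ca^\mu)^*(\lambda^n)(u)=\sum_{i=1}^d\Lambda_i^n(u_i)$ converging to $\log c^*$ in $L^1(c^*\mu)$, then upgrade this to an exact additive representation of $\log c^*$ and exponentiate. The exponentiation and the normalization $a_i^*(s)=0$ when $\delta_{(i)}'(s)=0$ (using that $\mu$ does not charge $Z_\delta$) are handled exactly as in the paper. The gap lies in the middle step, which is the technical heart of the lemma and which you only sketch.

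Concretely: extracting a subsequence along which $\sum_{i=1}^d\Lambda_i^{n_k}(u_i)\to\log c^*(u)$ for $\mu$-a.e.\ $u$ is fine, but the base-point/section argument you propose for recovering the individual summands does not go through as described. First, to define $\phi_i(t)$ as the limit of $\Lambda_i^{n_k}(t)-\Lambda_i^{n_k}(u_i^0)$ you need the points obtained from $u^0$ by replacing its $i$-th coordinate to lie in the $\mu$-full convergence set; Fubini only gives this for a.e.\ base point and a.e.\ section, and these sections must then be patched together. More seriously, reconstructing $\log c^*(u)=\log c^*(u^0)+\sum_i\phi_i(u_i)$ by changing one coordinate at a time requires every intermediate point to remain in the support of $\mu$, which is contained in $\triangle\cap L_\delta$ and is emphatically not a product set: already the ordering constraint $u_1\le\cdots\le u_d$ can force the intermediate point $(u_1,\dots,u_i,u_{i+1}^0,\dots,u_d^0)$ out of $\triangle$, and the requirement $(u_{(i-1)},u_{(i)})\subset\Psi_i^\delta$ fragments the support further. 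You acknowledge that ``the patching of these local sections'' requires care, but that patching is precisely the content to be proved, and it is not supplied. The paper avoids this entirely by separating one coordinate $j$ at a time: it verifies that the probability measure $P(du)=c^*(u)\mu(du)/\int_{I^d}c^*\,d\mu$ is absolutely continuous with respect to the product $P_1^j\otimes P_2^j$ of its marginals in $u_{(-j)}$ and $u_j$, and then invokes Proposition 2 of \cite{Ruschendorf1993369}, a closedness result asserting that an $L^1(P)$-limit of functions of the form $g_n(u_{(-j)})+h_n(u_j)$ retains that form; combining the resulting decompositions $\log c^*=\Phi_j(u_{(-j)})+\tilde\Lambda_j(u_j)$ over $j=1,\dots,d$ yields the full additive representation. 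To repair your argument you would either need to prove such a closedness statement yourself, including the measurable selection of the limit functions and the treatment of the non-product support, or cite it as the paper does.
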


\begin{proof}

 According to Theorem \ref{theo:bor_lew_nuss},
there exists a sequence $(\lambda^n, n\in \N)$ of elements of
$L^\infty (I)^{d}$ such that the optimal solution, say $c^*$,  satisfies
\reff{eq:cvA}. This implies, thanks to \reff{eq:L*},
 that there exist $d$ sequences $(\Lambda_i^n, n\in \N^*, 1 \leq i \leq d)$
 of elements of $L^\infty (I)$ such that the
 following convergence holds in $L^1(I^d, c^* \mu)$:
\begin{equation}
   \label{eq:cvln}
 \sum_{i=1}^d\Lambda^{n}_i(u_i)
\; \xrightarrow[n\rightarrow \infty ] \;
\log(c^*(u)).
\end{equation}

  We first assume that there exist
$\Lambda_i$, $1 \leq i \leq d$  measurable functions defined on $I$ such that
 $\mu$-a.e. on $S$:
 \begin{equation}
  \label{eq:Gam_lim}
  \sum_{i=1}^{d} \Lambda_i(u_i) = \log(c^*(u)).
  \end{equation}
   Set $a^*_i=\sqrt[d]{d!}\exp(\Lambda_i)$ so that $\mu$-a.e. on $S$:
    \begin{equation}
     \label{eq:c=ab}
     c^*(u)=\inv{d!}\prod_{i=1}^{d} a^*_i(u_i).
    \end{equation}
Recall $\mu(du)=\ind_{(Z_\delta^c \cap L_\delta) \cap \triangle}  (u) \,
du$. From the definition \reff{eq:def-Z} of $Z_\delta$, we deduce that
without loss  of   generality,  we   can  assume   that   $a^*_i(u_i)=0$  if
$\delta_{(i)}'(u_i)=0$. Therefore we obtain $c^*(u)=(1/d!)\ind_{L_\delta}(u)\prod_{i=1}^{d} a^*_i(u_i)$ for $u \in I^d$.

To complete the proof, we now show that \reff{eq:Gam_lim} holds for $\Lambda_i$, $1 \leq i \leq d$ measurable functions.
We introduce the notation
 $u_{(-i)}=(u_1,\hdots, u_{i-1},u_{i+1},\hdots, u_d) \in I^{d-1}$.
 Let us define the probability measure $P(du)=c^*(u)\mu(du)/ \int_{I^d} c^*(y)\mu(dy)$
 on $I^d$. We fix $j$, $1 \leq j \leq d$. In order to apply Proposition 2 of \cite{Ruschendorf1993369}, which ensures the existence of the limiting measurable functions $\Lambda_i$, $1\leq i \leq d$,
 we first check that $P$ is absolutely continuous with respect to $P^j_1 \otimes P^j_2$,
 where $P^j_1(du_{(-j)})= \int_{u_j \in I} P(du_{(-j)} du_j)$ and $P^j_2(du_j)= \int_{u_{(-j)} \in I^{d-1}} P(du_{(-j)} du_j)$
 are the marginals of $P$. Notice that there exists a non-negative density function $h$ such that $P(du)=h(u_{(-j)},u_j)du_{(-j)}du_j$. Let $h_1(u_{(-j)})=\int h(u_{(-j)},u_j) d u_j$ and $h_2(u_{j})=\int h(u_{(-j)},u_j) d u_{(-j)}$ denote the density of the marginals $P^j_1$ and $P^j_2$. Then the density of the product measure $P^j_1 \otimes P^j_2$ is given by $P^j_1 \otimes P^j_2(du) = h_1(u_{(-j)})h_2(u_{j}) du_{(-j)}du_j$. The support of the density $h$ is noted by $T_0=\{u \in I^d; h(u) >0\}$, and the support of the marginals are noted by $T_1= \{v \in I^{d-1}; h_1(v) > 0\}$ and $T_2 = \{ t \in I; h_2(t)>0\}$. With this notation, we have that a.e. $T_0 \subset T_1 \times T_2$ (that is $T_0 \cap (T_1 \times T_2)^c$ is of zero Lebesgue measure).  If $A \subset I^d$ is such that $\int \ind_{A}(u)  h_1(u_{(-j)})h_2(u_{j}) du_{(-j)}du_j =0$, then we also have $\int \ind_{A \cap (T_1 \times T_2)}(u)  h_1(u_{(-j)})h_2(u_{j}) du_{(-j)}du_j =0$. Since $ h_1 h_2$ is positive on $T_1 \times T_2$, this implies that $A \cap (T_1 \times T_2)$ has zero Lebesgue measure. Therefore we have:
 \begin{equation*}
  \int \ind_A(u) h(u) du  = \int \ind_{A \cap (T_1 \times T_2)} (u) h(u) du + \int \ind_{A \setminus (T_1 \times T_2)} (u) h(u) du =0,
 \end{equation*}
 since $h=0$ a.e. on $A \setminus (T_1 \times T_2)$. This proves that $P$ is absolutely continuous with respect to $P^j_1 \otimes P^j_2$.
Then according to Proposition 2 of \cite{Ruschendorf1993369}, \eqref{eq:cvln} implies that
there exist measurable functions $\Phi_j $ and
$\tilde{\Lambda}_j $ defined respectively on $I^{d-1}$ and $I$, such that $c^*\mu$-a.e. on $\triangle$:
\[
\log(c^*(u))=\Phi_j(u_{(-j)})+\tilde{\Lambda}_j(u_j).
\]
As    ${\mu}$-a.e. $c^*>0$,  this equality   holds $\mu$-a.e. on $S$.
Since we have such a representation for every $1 \leq j \leq d$, we can easily
verify that  $\log(c^*(u))=\sum_{i=1}^d \Lambda_i(u_i)$ $\mu$-a.e. with $\tilde{\Lambda}_j =
\Lambda_j$ up to an additive constant.

\end{proof}

\subsection{Proof of Proposition \ref{prop:c_delta_unique_product}}
 \label{sec:proof-prod}
Let $\delta \in \cd^0$. Recall that
$u^{OS}$ denotes the ordered vector of $u\in \R^d$.
Let $c$ be the density of a symmetric copula in $\R ^d$ such that
$\ca(c)=b^\delta$ and $c$ is of  product
form, that is, thanks to Corollary \ref{cor:c-triangle},
$c(u)=c^*(u^{OS})$ with
\[
c^*(u)= \inv{d!}\, \ind_{L_\delta}(u) \prod_{i=1}^d a^*_i(u_i), \quad u=(u_1, \ldots, u_d)\in \triangle,
\]
where $a^*_i$,  $1 \leq  i \leq d$  are measurable  non-negative functions
defined  on  $I$. In  this  section,  we  shall  prove that  $c$  equals
$c_\delta$  defined by  \reff{eq:c_delta}; that is, for all $1\leq i\leq
d$,  $a^*_i$ is a.e. equal, up
to a multiplicative constant,  to
$a_i$ defined
in  \reff{eq:ai}. This  will prove  Proposition
\ref{prop:c_delta_unique_product}.

Recall the definitions of $g_i^{(j)},m_i^{(j)},d_i^{(j)}$ from  Section
\ref{sec:max_ent_copula}, for $1\leq i \leq d+1$.
We deduce from \reff{eq:L_delta_leq_prod} that:
\[
c^*(u)=\inv{d!} \,\ind_{L_\delta}(u) \prod_{i=1}^d a^*_i(u_i)\ind_{\Psi^\delta_i \cap \Psi^\delta_{i+1}}(u_i), \quad
u=(u_1, \ldots, u_d)\in \triangle.
\]
We deduce also from Lemma \ref{lem:reduction} that $\ca^\mu(c^*)=\delta$.
We introduce the following family of functions:
    \[
       B^*_{d+1}(t) =  E^*_0(t) = 1,
    \]
    and for $1 \leq i \leq d$, $t \in (g_i^{(j)}, d_i^{(j)})$ and  $t' \in (g_{i+1}^{(j)}, d_{i+1}^{(j)})$:
    \[
       B^*_i(t) = \int_t^{d_i^{(j)}} a^*_i(s) B^*_{i+1}(s) \, ds,
       \quad E^*_i(t') = \int_{g_{i+1}^{(j)}}^{t'} a^*_i(s) E^*_{i-1}(s) \, ds.
    \]
Recall the functions $B_i$, for $1\leq i\leq d+1$,  and $E_i$, for $0\leq
i\leq  d$  defined by \reff{eq:Bd} and
\reff{eq:Ed}.
We will prove by (downward) induction on $i\in\{1, \ldots, d+1\}$ that:
\begin{equation}
   \label{eq:HRB}
     B^*_i(t)=B^*_i(m_i^{(j)}) B_i(t), \quad t \in (g_i^{(j)}, d_i^{(j)}) .
\end{equation}
For $i=d+1$, it trivially holds.  Let us assume that \reff{eq:HRB} holds
for  $i+1$,  $d\geq  i  \geq 1$.   Recall  the  convention  $K_{d+1}=0$,
$\delta_{(d+1)}=0$          and         $\delta_{(0)}=1$. Arguing as in the proof of Lemma \ref{lem:B_iE_i},   we    deduce  from
$\ca^\mu_{i}(c^*)=\delta_{(i)}$ that   for    $r \in \Psi_i^\delta$:
   \begin{align*}
  \delta_{(i)}(r)
&= {d!} \int_{I^d} c^*(u)\ind_{\{ u_{(i)} \leq r\}} \, \mu(du) \\
  & = {d!} \int_{I^d} c^*(u)\ind_{\{ u_{(i+1)} \leq r\}} \, \mu(du)  + \int_{\triangle}  \ind_{L_\delta}(u) \prod_{j=1}^d  \left(a^*_j(u_j) \ind_{ \Psi^\delta_j \cap \Psi^\delta_{j+1} }(u_j) \right) \ind_{
    \{u_{i} \leq r \leq u_{i+1} \} }  \, du \\
  & = \delta_{(i+1)}(r) + B^*_{i+1}(r) E^*_{i}(r).
   \end{align*}
 This gives on $\Psi_i^\delta$:
\begin{equation}
   \label{eq:BE*}
 \delta_{(i)} -\delta_{(i+1)}= B^*_{i+1}E^*_i.
\end{equation}
Notice  \reff{eq:BE*} holds for $i=d$ thanks to the conventions. We get on $\Psi_i^\delta$:
\[
\delta_{(i)}' -\delta_{(i+1)}'
= -K'_{i+1} B^*_{i+1}E^*_i+B^*_{i+1}a^*_iE^*_{i-1}
= -\delta'_{(i+1)} + B^*_{i+1}a^*_iE^*_{i-1}.
\]
where we took  the
derivative in \reff{eq:BE*}, twice the induction hypothesis for
$B^*_{i+1}$ and \reff{eq:Bi_exp} for the first equality; then
\reff{eq:F} and \reff{eq:BE*} for the second.
We deduce that on $\Psi_i^\delta$:
\begin{equation}
   \label{eq:BE*2}
\delta_{(i)}'= B^*_{i+1}a^*_iE^*_{i-1}.
\end{equation}
 On $\Psi_i^\delta$, we can divide \reff{eq:BE*2} by \reff{eq:BE*}
 and get, thanks to \reff{eq:F}:
\[
    \frac{a^*_i B^*_{i+1}}{B^*_i}=\frac{\delta'_{(i)}}{\delta_{(i-1)}
      -\delta_{(i)}}= K'_i.
\]
Notice that $(B_i^*)'= - a^*_i B^*_{i+1}$. So using  the representation
\reff{eq:Bi_exp} of $B_i$,
we get that \reff{eq:HRB} holds
for  $i$. Thus \reff{eq:HRB} holds for $1\leq i\leq d+1$.
Then use \reff{eq:HRB} as well as  $(B_i^*)'= -a^*_i B^*_{i+1}$ and  $B_i'= -a_i B_{i+1}$
to get that for  $t \in (g_i^{(j)}, d_i^{(j)}) \cap (g_{i+1}^{(k)}, d_{i+1}^{(k)})$:
\[
a^*_i(t) = \frac{B^*_i(m_i^{(j)})}{B^*_{i+1}(m_{i+1}^{(k)})} a_i(t).
\]
Therefore if $u=(u_1, \hdots, u_d) \in L^\delta$, we have:
\[
   \prod_{i=1}^d a^*_i(u_{(i)}) = \frac{B^*_1(m_1)}{B^*_{d+1}(m_{d+1})} \prod_{i=1}^d a_i(u_{(i)}),
\]
since when $u \in L_\delta$, $u_{(i-1)}$ and $u_{(i)}$ belong to the same interval $(g_i^{(j)}, d_i^{(j)})$ for $2 \leq i \leq d$.
This ensures that $c_\delta$ and $c^*$ are
densities of probability function which differ by a multiplicative
constant, therefore they are equal. This ends the proof of Proposition
\ref{prop:c_delta_unique_product}.

\subsection{Proof of case (a) for Theorems \ref{theo:spec}  and
  \ref{theo:f*}}
\label{sec:case-a}

We first consider the case $d=2$. Let $\delta\in \cd^0$ with
$\cj(\delta)=+\infty $. Recall $\cj(\delta)$ is defined by
\reff{eq:cj_delta}. We have:
\begin{align*}
\cj(\delta)
&=- \int_I \delta'_{(2)}(t) \log(2(t-\delta_{(2)}(t)))\, dt\\
&=-\log(2) - \int_I  \log(t-\delta_{(2)}(t))\, dt+ \int_I
(1-\delta'_{(2)}(t)) \log(t-\delta_{(2)}(t))\, dt \\
&=-\log(2) - \int_I  \log(t-\delta_{(2)}(t))\, dt+
\left[
(t-\delta_{(2)}(t)) \log(t-\delta_{(2)}(t)) -
(t-\delta_{(2)}(t))
\right]_0^1\\
&=-\log(2) - \int_I  \log(t-\delta_{(2)}(t))\, dt,
\end{align*}
where we  used $\delta_{(1)}+  \delta_{(2)}=2t$ for the  first equality,
$\delta_{(2)}(1)=1$ and $\delta_{(2)}(0)=0$ for the second and last.  In
particular, we  obtain that  $\cj(\delta)$ is  equal to  $-\log(2)+ \cjj
(\delta_{(2)})$,     with      $\cjj$  as also defined     by (1)     in
\cite{butucea2013maximum}.  Therefore  we  deduce case  (a)  of  Theorem
\ref{theo:spec}  (for   $d=2$)  from   case  (a)   of  Theorem   2.4  in
\cite{butucea2013maximum}.  Then, we  get from \reff{eq:entr_decomp} and
Theorem \ref{theo:spec}  case (a) that $H(F)  = -\infty$ for all  $F \in
\cl_{2}^{OS}(\mathbf{F})$.   This    proves   case   (a)    for  Theorem
\ref{theo:f*} (for $d=2$).

  We  then consider  the case  $d\geq  2$.  Let  $\delta\in \cd^0$  with
  $\cj(\delta)=+\infty $.  This implies that there exists  $2 \leq i \leq d$
  such  that  $\int_I \delta_{(i)}'(t)  \val{\log(\delta_{(i-1)}(t)  -
  \delta_{(i)}(t))}  \,  dt= + \infty $.  Set  $\mathbf{F}=(\delta_{(i-1)},
  \delta_{(i)})$  and notice  that  $\mathbf{F}$ belongs  to $\cf_2$  as
  $\delta_{(i)}$  is   $d$-Lipschitz.  Since   $\int_I  \delta_{(i)}'(t)
  \val{\log(\delta_{(i-1)}(t) - \delta_{(i)}(t))} \,  dt=+\infty $, we deduce
  from   the  first   part  of   this   Section  that   $\max  _{F   \in
    \cl_{2}^{OS}(\mathbf{F})} H(F)=-\infty$.

Consider a copula $C$ belonging to $\cc_\delta\bigcap \cc^{sym}$ and $U$
a random vector on $I^d$ with cdf $C$.  According to Lemma
\ref{lem:HX_HXPI} and Lemma \ref{lem:exch_OS}, as $C$ is symmetric, we have:
\[
H(U^{OS})=H(U) - \log(d!)=H(C) - \log(d!).
\]
It is easy to check that if $X=(X_1, \ldots, X_d)$ is a random vector on
$I^d$ and $2\leq i\leq d$, then we have $  H((X_{i-1}, X_i))\geq  H(X)$.
This implies that, for $V=(U^{OS}_{i-1}, U^{OS}_i)$,
\[
H(V) \geq H(C) - \log(d!).
\]
Since the  cdf of $U_\ell^{OS}$ is $\delta_{(\ell)}$ as $C\in
\cc_\delta$, we deduce the cdf
of $V$ belongs   to $\cl_{2}^{OS}(\mathbf{F}) $, and thus $H(V)=-\infty
$. This implies that $H(C)=-\infty $. Thanks to Proposition
\ref{prop:sym} which states that the entropy is maximal on symmetric
copulas, we deduce that:
\[
\max_{C \in \cc_\delta} H(C)=\max_{C \in \cc_\delta\bigcap \cc^{sym}}
H(C)= -\infty .
\]
This proves  cases (a) for  Theorem \ref{theo:spec}.  Then, we  get from
\reff{eq:entr_decomp} that $H(F) =
-\infty$ for all $F \in  \cl_{d}^{OS}(\mathbf{F})$. This proves case (a)
for Theorem \ref{theo:f*}.

\subsection{Proof of Theorem \ref{theo:spec}, case (b)}
\label{sec:proof-spec}
Let $\delta \in  \cd$ with $\cj(\delta) <  +\infty$.  Thanks to Lemma
\ref{lem:delta'_Psi},
$\cj(\delta)<+\infty $ implies that $\delta\in \cd^0$.   By construction,
$c_\delta$   introduced  in   Proposition  \ref{prop:c_delta}   verifies
$\mu-a.e.$  $c_\delta  >  0$.   The density  $c_\delta$  is  a  feasible
solution  to  the problem  $(P_\mu^\delta)$.
Theorem  \ref{theo:bor_lew_nuss} ensures
the existence of a unique  optimal solution $c^*$. Furthermore, by Lemma
\ref{lem:c*=ab},  we  have  that there  exist  non-negative,  measurable
functions  $a_i^*$,  $1  \leq  i   \leq  d$,  such  that  $c^*(u)=(1/d!)
\ind_{L_\delta}(u)\prod_{i=1}^d  a_i^*(u_{i})$  $\mu$-a.e. By  Corollary
\ref{cor:c-triangle}, the optimal solution  $c$ of $(P^\delta)$ is given
by, for $u=(u_1, \hdots, u_d)$:
   \[
     c(u)=c^*(u^{OS})\ind_{Z_\delta^c \cap L_\delta}(u)= \inv{d!} \ind_{L_\delta}(u)\prod_{i=1}^d a^*_i(u_{(i)}) \ind_{\{\delta'_{(i)}(u_{(i)}) \neq 0 \}}.
   \]
   Since      $c$     is      of      product     form,      Proposition
   \ref{prop:c_delta_unique_product}  yields   that  $c=c_\delta$  a.e.,
   therefore  $C_\delta$ is  the unique  copula achieving  $H(C_\delta)=
   \max_{C \in \cc_\delta} H(C)$.

\newpage

\section{Overview of the notations} \label{sec:not-appendix}

\begin{itemize}
 \item[-] $\cf_d$: set of continuous one-dimensional marginals cdf $\mathbf{F}=(\mathbf{F}_1, \hdots, \mathbf{F}_d)$ of $d$-dimensional order statistics, see \reff{eq:def_Fd}.
 \item[-] $\cf_d^0$: set of continuous one-dimensional marginals cdf $\mathbf{F}=(\mathbf{F}_1, \hdots, \mathbf{F}_d)$ of $d$-dimensional abs. cont. order statistics, see Definition \ref{defi:f_d^0}.

 \item[-] $\cl_d$: set of all cdf's on $\R^d$.
 \item[-] $\cl_d^{1c}$: set of cdf's on $\R^d$ with continuous one-dimensional marginals cdf.
 \item[-] $\cl_d^0$: set of absolutely continuous cdf's on $\R^d$.
 \item[-] $\cl_d^{OS}$: set of cdf's of $d$-dimensional order statistics with continuous one dimensional marginals cdf.
 \item[-] $\cl_d^{OS}(\mathbf{F})$: set of cdf's of $d$-dimensional order statistics with marginals cdf $\mathbf{F}$, see \reff{eq:def_LOSF}.
 \item[-] $\cl_d^{sym}$: set of symmetric cdf's on $\R^d$.

 \item[-] $S_\mathbf{F}$: symmetrizing operator on copulas, associated to the marginals cdf $\mathbf{F}$, see Definition \ref{defi:SF}.
 \item[-] $\cc$: set of all copulas.
 \item[-] $\cc^0$: set of absolutely continuous copulas.
 \item[-] $\cc^{OS}(\mathbf{F})$: set of copulas of order statistics with marginals cdf $\mathbf{F}$, see \reff{eq:def_COSF}.
 \item[-] $\cc^{sym}$: set of symmetric (permutation invariant) copulas.
 \item[-] $\cc^{sym}(\mathbf{F})$: image of the set $\cc^{OS}(\mathbf{F})$ by the operator $S_\mathbf{F}$, see \reff{eq:def-CFsym}. It is the set of symmetric copulas with multidiagonal
 $\delta^\mathbf{F}$.
 \item[-] $\cc_\delta$: set of copulas with multidiagonal $\delta$, see Section \ref{sec:multidiag}.
 \item[-] $\cc_\delta^0$: set of abs. cont. copulas with multidiagonal $\delta$, see Section \ref{sec:multidiag}.

 \item[-] $\cd$: set of multidiagonals of copulas, see Section \ref{sec:multidiag}.
 \item[-] $\cd^0$: set of multidiagonals of abs. cont. copulas, see Section \ref{sec:multidiag}.

 \item[-] $\Psi_i^\mathbf{F}$: set of points $t \in \R$ for which the marginals cdf $\mathbf{F}=(\mathbf{F}_1, \hdots, \mathbf{F}_d)$ verify $\mathbf{F}_{i-1}(t) > \mathbf{F_i}(t)$ , see \reff{eq:def_Psi_i_F}.
 \item[-] $T^\mathbf{F}$: set of points $u=(u_1, \hdots, u_d) \in I^d$ for which $\mathbf{F}_1^{-1}(u_1) \leq \hdots \leq \mathbf{F}_d^{-1}(u_d)$, see \reff{eq:c_zero-T}. The density of all copulas in $\cc^{OS}(\mathbf{F})$ vanishes on $T^\mathbf{F}$.
 \item[-] $L^\mathbf{F}$: set of ordered vectors $x \in \R^d$ such that the marginals cdf's $\mathbf{F}=(\mathbf{F}_1, \hdots, \mathbf{F}_d)$ verify $\mathbf{F}_{i-1}(t) > \mathbf{F}_{i}(t)$ for all $t \in (x_{i-1},x_i)$, $2 \leq i \leq d$, see \reff{eq:def_LF}. The density of any abs. cont. cdf in $\cl_d^{OS}(\mathbf{F})$ vanishes outside $L^\mathbf{F}$.
 \item[-] $L_\delta$: set of points $u=(u_1, \hdots, u_d) \in I^d$ for which all points $t \in (u_{(i-1)},u_{(i)})$ verify $\delta_{(i-1)}(t) > \delta_{(i)}(t)$ for all $2 \leq i \leq d$,
  see \reff{eq:def_LD}. The density of  any copula in $\cc^0_\delta$ vanishes outside $L_\delta$.
 \item[-] $Z_\delta$: set of points $u=(u_1, \hdots, u_d) \in I^d$  such that $\delta'_{(i)}(u_{(i)})=0$ for some $1\leq i \leq d$, see \reff{eq:def-Z}. The density of
 any copula in $\cc^0_\delta$ vanishes on $Z_\delta$.

\end{itemize}

\newpage

\bibliographystyle{abbrvnat}
\bibliography{Max_entr_ord_biblio}

\begin{thebibliography}{19}
\providecommand{\natexlab}[1]{#1}
\providecommand{\url}[1]{\texttt{#1}}
\expandafter\ifx\csname urlstyle\endcsname\relax
  \providecommand{\doi}[1]{doi: #1}\else
  \providecommand{\doi}{doi: \begingroup \urlstyle{rm}\Url}\fi

\bibitem[Arnold et~al.(1992)Arnold, Balakrishnan, and
  Nagaraja]{arnold1992first}
B.~C. Arnold, N.~Balakrishnan, and H.~N. Nagaraja.
\newblock \emph{A first course in order statistics}, volume~54.
\newblock Siam, 1992.

\bibitem[Av{\'e}rous et~al.(2005)Av{\'e}rous, Genest, and
  Kochar]{averous2005dependence}
J.~Av{\'e}rous, C.~Genest, and S.~C. Kochar.
\newblock On the dependence structure of order statistics.
\newblock \emph{Journal of Multivariate Analysis}, 94\penalty0 (1):\penalty0
  159--171, 2005.

\bibitem[Bickel(1967)]{bickel1967some}
P.~J. Bickel.
\newblock Some contributions to the theory of order statistics.
\newblock In \emph{Proc. {F}ifth {B}erkeley {S}ympos. {M}ath. {S}tatist. and
  {P}robability ({B}erkeley, {C}alf., 1965/66), {V}ol. {I}: {S}tatistics},
  pages 575--591. Univ. California Press, Berkeley, Calif., 1967.

\bibitem[Boland et~al.(1996)Boland, Hollander, Joag-Dev, and
  Kochar]{boland1996bivariate}
P.~J. Boland, M.~Hollander, K.~Joag-Dev, and S.~Kochar.
\newblock Bivariate dependence properties of order statistics.
\newblock \emph{Journal of Multivariate Analysis}, 56\penalty0 (1):\penalty0
  75--89, 1996.

\bibitem[Borwein et~al.(1994)Borwein, Lewis, and Nussbaum]{borwein1994entropy}
J.~Borwein, A.~Lewis, and R.~Nussbaum.
\newblock Entropy minimization, \( {DAD} \) problems, and doubly stochastic
  kernels.
\newblock \emph{Journal of Functional Analysis}, 123\penalty0 (2):\penalty0 264
  -- 307, 1994.
\newblock ISSN 0022-1236.
\newblock \doi{http://dx.doi.org/10.1006/jfan.1994.1089}.
\newblock URL
  \url{http://www.sciencedirect.com/science/article/pii/S0022123684710895}.

\bibitem[Butucea et~al.(2015)Butucea, Delmas, Dutfoy, and
  Fischer]{butucea2013maximum}
C.~Butucea, J.-F. Delmas, A.~Dutfoy, and R.~Fischer.
\newblock Maximum entropy copula with given diagonal section.
\newblock \emph{Journal of Multivariate Analysis}, 137:\penalty0 61 -- 81,
  2015.
\newblock ISSN 0047-259X.
\newblock \doi{http://dx.doi.org/10.1016/j.jmva.2015.01.003}.
\newblock URL
  \url{http://www.sciencedirect.com/science/article/pii/S0047259X15000081}.

\bibitem[David and Nagaraja(1970)]{david1970order}
H.~A. David and H.~N. Nagaraja.
\newblock \emph{Order statistics}.
\newblock Wiley Online Library, 1970.

\bibitem[de~Melo~Mendes and Sanfins(2007)]{de2007limiting}
B.~V. de~Melo~Mendes and M.~A. Sanfins.
\newblock The limiting copula of the two largest order statistics of
  independent and identically distributed samples.
\newblock \emph{Brazilian Journal of Probability and Statistics}, 21:\penalty0
  85--101, 2007.

\bibitem[Dubhashi and H{\"a}ggstr{\"o}m(2008)]{dubhashi2008note}
D.~Dubhashi and O.~H{\"a}ggstr{\"o}m.
\newblock A note on conditioning and stochastic domination for order
  statistics.
\newblock \emph{J. Appl. Probab.}, 45\penalty0 (2):\penalty0 575--579, 2008.
\newblock ISSN 0021-9002.
\newblock \doi{10.1239/jap/1214950369}.
\newblock URL \url{http://dx.doi.org/10.1239/jap/1214950369}.

\bibitem[Hu and Chen(2008)]{hu2008dependence}
T.~Hu and H.~Chen.
\newblock Dependence properties of order statistics.
\newblock \emph{Journal of Statistical Planning and Inference}, 138\penalty0
  (7):\penalty0 2214--2222, 2008.

\bibitem[Jaworski(2009)]{Jaworski20092863}
P.~Jaworski.
\newblock On copulas and their diagonals.
\newblock \emph{Information Sciences}, 179\penalty0 (17):\penalty0 2863 --
  2871, 2009.
\newblock ISSN 0020-0255.
\newblock \doi{10.1016/j.ins.2008.09.006}.
\newblock URL
  \url{http://www.sciencedirect.com/science/article/pii/S0020025508003836}.

\bibitem[Jaworski and Rychlik(2008)]{jaworski2008distributions}
P.~Jaworski and T.~Rychlik.
\newblock On distributions of order statistics for absolutely continuous
  copulas with applications to reliability.
\newblock \emph{Kybernetika}, 44\penalty0 (6):\penalty0 757--776, 2008.

\bibitem[Kim and David(1990)]{kim1990dependence}
S.~Kim and H.~David.
\newblock On the dependence structure of order statistics and concomitants of
  order statistics.
\newblock \emph{Journal of statistical planning and inference}, 24\penalty0
  (3):\penalty0 363--368, 1990.

\bibitem[Lebrun and Dutfoy(2014)]{lebrun2014copulas}
R.~Lebrun and A.~Dutfoy.
\newblock Copulas for order statistics with prescribed margins.
\newblock \emph{Journal of Multivariate Analysis}, 128:\penalty0 120--133,
  2014.

\bibitem[Navarro and Balakrishnan(2010)]{navarro2010study}
J.~Navarro and N.~Balakrishnan.
\newblock Study of some measures of dependence between order statistics and
  systems.
\newblock \emph{Journal of Multivariate Analysis}, 101\penalty0 (1):\penalty0
  52--67, 2010.

\bibitem[Navarro and Spizzichino(2010)]{NavarroSpizzichino2010}
J.~Navarro and F.~Spizzichino.
\newblock On the relationships between copulas of order statistics and marginal
  distributions.
\newblock \emph{Statist. Probab. Lett.}, 80\penalty0 (5-6):\penalty0 473--479,
  2010.
\newblock ISSN 0167-7152.
\newblock \doi{10.1016/j.spl.2009.11.025}.
\newblock URL \url{http://dx.doi.org/10.1016/j.spl.2009.11.025}.

\bibitem[R{\"u}schendorf and Thomsen(1993)]{Ruschendorf1993369}
L.~R{\"u}schendorf and W.~Thomsen.
\newblock Note on the {S}chr\"odinger equation and {$I$}-projections.
\newblock \emph{Statist. Probab. Lett.}, 17\penalty0 (5):\penalty0 369--375,
  1993.
\newblock ISSN 0167-7152.
\newblock \doi{10.1016/0167-7152(93)90257-J}.
\newblock URL \url{http://dx.doi.org/10.1016/0167-7152(93)90257-J}.

\bibitem[Schmitz(2004)]{schmitz2004revealing}
V.~Schmitz.
\newblock Revealing the dependence structure between ${X}_{(1)}$ and
  ${X}_{(n)}$.
\newblock \emph{Journal of statistical planning and inference}, 123\penalty0
  (1):\penalty0 41--47, 2004.

\bibitem[Zhao and Lin(2011)]{Zhao2011628}
N.~Zhao and W.~T. Lin.
\newblock A copula entropy approach to correlation measurement at the country
  level.
\newblock \emph{Applied Mathematics and Computation}, 218\penalty0
  (2):\penalty0 628 -- 642, 2011.
\newblock ISSN 0096-3003.
\newblock \doi{10.1016/j.amc.2011.05.115}.
\newblock URL
  \url{http://www.sciencedirect.com/science/article/pii/S0096300311007983}.

\end{thebibliography}

\end{document}